\theoremstyle{plain}
\newtheorem{theorem}{Theorem}[section]
\newtheorem{corollary}[theorem]{Corollary}
\newtheorem{lemma}[theorem]{Lemma}
\newtheorem{proposition}[theorem]{Proposition}
\theoremstyle{definition}
\theoremstyle{remark}
\begin{document}

\title{Hardy Spaces ($0<p<\infty$) over Lipschitz Domains}
\author{Deng Guantie\thanks{
      E-mail: denggt@bnu.edu.cn, School of Mathematical Sciences, 
      Beijing Normal University, Beijing, China.} 
    and Liu Rong\thanks{
      Corresponding author, E-mail: rong.liu@mail.bnu.edu.cn
      School of Mathematical Sciences, 
      Beijing Normal University, Beijing, China.}
}

\maketitle

\begin{abstract}
  Let $0<p<\infty$, $\Gamma$ be a Lipschitz curve on the 
  complex plane~$\mathbb{C}$ and $\Omega_+$ is the domain above $\Gamma$, 
  we define Hardy space $H^p(\Omega_+)$ as the set of analytic functions $F$ 
  satisfying $\sup_{\tau>0}(\int_{\Gamma} 
    |F(\zeta+\mathrm{i}\tau)|^p |\,\mathrm{d}\zeta|)^{\frac1p}< \infty$.
  We denote the conformal mapping from $\mathbb{C}_+$ onto $\Omega_+$ as $\Phi$, 
  and prove that, $H^p(\Omega_+)$ is isomorphic to $H^p(\mathbb{C}_+)$, 
  the classical Hardy space on the upper half plane~$\mathbb{C}_+$, 
  under the mapping $T\colon F\to F(\Phi)\cdot (\Phi')^{\frac1p}$. 
  Besides, $T$ and $T^{-1}$ are both bounded. We also prove that 
  if $F(w)\in H^p(\Omega_+)$, then $F(w)$ has non-tangential boundary limit 
  $F(\zeta)$ a.e.\@ on $\Gamma$, and, if $1\leqslant p< \infty$, 
  $F(w)$ is the Cauchy integral on $\Gamma$ of $F(\zeta)$. 
\end{abstract}

{\bf Keywords:}
  Hardy space, Lipschitz domain, non-tangential boundary limit, 
  Cauchy integral representation

{\bf 2010 Mathematics Subject Classification:}
  Primary: 30H10, Secondary: 30E20, 30E25  

\section{Introduction}
We will study Hardy space $H^p(\Omega_+)$ 
over Lipschitz domain~$\Omega_+$, where $0<p<\infty$, in this paper. 
The latter is the simply connected domain over a Lipschitz curve~$\Gamma$, 
and the former is defined as the set of functions which are $p$-integrable 
on curves parallel to $\Gamma$ with a uniform boundary. 

There are two more definitions of $H^p(\Omega_+)$, by \cite{Du70}.
One way is to demand the subharmonic function $\lvert f\rvert^p$ has 
a harmonic majorant on $\Omega_+$, and we denote the function space as $\widetilde{H}^p(\Omega_+)$. The other way is to require the boundedness of the integrals of $\lvert f\rvert^p$
over certain curves tending to the boundary $\Gamma$, in the sense that 
they eventually surround every compact subset of $\Omega_+$. We will denote the resulting 
function space as $E^p(\Omega_+)$, for distinction.

By Riemann mapping theorem, there exists a conformal mapping $\Phi$ 
from $\mathbb{C}_+$ onto $\Omega_+$. Duren has proved that: if $a\leqslant \lvert\Phi'(w)\rvert\leqslant b$, 
where $a$, $b$ are two positive constants and $w\in\Omega_+$, then 
$E^p(\Omega_+)$ coincides with $\widetilde{H}^p(\Omega_+)$~\cite{Du70}. 
In our setting, $|\Phi'(z)|$ is, of course, unbounded. Nevertheless,  
we could prove that $H^p(\Omega_+)\subset E^p(\Omega_+)$, 
and that $H^p(\Omega_+)$ is isomorphic to $H^p(\mathbb{C}_+)$ under a bounded 
linear transform which depends on $\Phi$ and whose inverse is also bounded.

In this paper, $p$ is usually set to be in $(0,\infty)$. In our paper~\cite{DL17}, 
the range of $p$ is $(1,\infty)$. 
Thus, this paper extends many results in \cite{DL17}, including  
the existence of non-tangential boundary limit of functions in $H^p(\Omega_+)$ for $0<p<\infty$, and the Cauchy representation of $H^p(\Omega_+)$ functions for $1\leqslant p<\infty$. We also offer new 
proofs to some proven theorems in that paper. 

\section{Basic Definition}
Let $0<p\leqslant\infty$, and for $1\leqslant p\leqslant\infty$, 
denote $q$ as the conjugate coefficient of $p$, 
which means that $\frac{1}{p}+\frac{1}{q}=1$. 
We first introduce definitions of Hardy spaces over Lipschitz domains.

Let $a\colon\mathbb{R}\to\mathbb{R}$ be a Lipschitz function, 
where $|a(u_1)-a(u_2)|\leqslant M|u_1-u_2|$ 
for all $u_1$, $u_2\in\mathbb{R}$ and fixed $M>0$, 
$\Gamma=\{\zeta(u)=u+\mathrm{i}a(u)\colon u\in\mathbb{R}\}\subset\mathbb{C}$ 
be the graph of $a(u)$, we have $|a'(u)|\leqslant M$ a.e.\@, 
and the arc length measure of $\Gamma$ is 
$|\mathrm{d}\zeta|= (1+{a'}^2(u))^{1/2}\,\mathrm{d}u$. 
It follows that 
$\mathrm{d}u\leqslant |\mathrm{d}\zeta|\leqslant (1+M^2)^{1/2}\mathrm{d}u$. 
For $F(\zeta)$ defined on $\Gamma$, since 
$\int_{\Gamma} |F(\zeta)|^p |\mathrm{d}\zeta|
  = \int_{\mathbb{R}} |F(u+\mathrm{i}a(u))|^p
    \cdot |1+\mathrm{i}a'(u)|\,\mathrm{d}u$, we have
\[\int_{\mathbb{R}} \big|F\big(u+\mathrm{i}a(u)\big)\big|^p \,\mathrm{d}u
  \leqslant \int_{\Gamma} |F(\zeta)|^p |\mathrm{d}\zeta|
  \leqslant \sqrt{1+M^2}\int_{\mathbb{R}} 
      \big|F\big(u+\mathrm{i}a(u)\big)\big|^p \,\mathrm{d}u,\]
that is, $F(\zeta)\in L^p(\Gamma,|\mathrm{d}\zeta|)$ if and only if 
$F(u+\mathrm{i}a(u))\in L^p(\mathbb{R},\mathrm{d}u)$.
The space $L^p(\Gamma,|\mathrm{d}\zeta|)$ may thus be identified with 
$L^p(\mathbb{R},\mathrm{d}u)$. We let $\Omega_+$ denote the set 
$\{u+\mathrm{i}v\in\mathbb{C} \colon v>a(u)\}$, $\Omega_-$ denote  
$\{u+\mathrm{i}v\in\mathbb{C} \colon v<a(u)\}$, and $\Gamma_{\tau}$ denote 
$\Gamma+\mathrm{i}\tau= \{\zeta+\mathrm{i}\tau\colon \zeta\in\Gamma\}$ 
for $\tau\in\mathbb{R}$.

Let $F(w)$ be an analytic function on $\Omega_+$, 
we say that $F(w)\in H^p(\Omega_+)$, if
\[\sup_{\tau>0}\Big(\int_{\Gamma_\tau} |F(w)|^p |\mathrm{d}w|\Big)^{\frac1p}
  = \lVert F\rVert_{H^p(\Omega_+)}<\infty,\quad
  \text{for } 0<p<\infty,\]
or in the case of\/ $p=\infty$,
\[\sup_{w\in\Omega_+}|F(w)|=\lVert F\rVert_{H^\infty(\Omega_+)}<\infty.\]
Notice that 
\[\int_{\Gamma_\tau} |F(w)|^p |\mathrm{d}w|
  =\int_{\Gamma} |F(\zeta+\mathrm{i}\tau)|^p |\mathrm{d}\zeta|.\]

Fix $u_0\in\mathbb{R}$ such that 
$\zeta'(u_0)=|\zeta'(u_0)|\mathrm{e}^{\mathrm{i}\phi_0}$ exists, and 
choose $\phi\in(0,\frac{\pi}2)$, we denote 
$\zeta_0=\zeta(u_0)=u_0+\mathrm{i}a(u_0)$ and let
\[\Omega_{\phi}(\zeta_0)
  =\{\zeta_0+r\mathrm{e}^{\mathrm{i}\theta}
     \colon r>0, \theta-\phi_0\in(\phi,\pi-\phi)\},\]
then we say that a function $F(w)$, defined on $\Omega_+$, 
has non-tangential boundary limit~$l$ at $\zeta_0$ if
\[\lim_{\substack{w\in\Omega_{\phi}(\zeta_0)\cap\Omega_+,\\ w\to\zeta_0}}
  F(w)= l, \quad \text{for any } \phi\in\Big(0,\frac{\pi}2\Big).\]
It is easy to verify that for fixed $\phi\in(0,\frac{\pi}2)$, 
there exists constant $\delta>0$, such that, if $|z|<\delta$ and 
$\zeta_0+z\in\Omega_\phi(\zeta_0)$, then $\zeta_0+z\in\Omega_+$ and 
$\zeta_0-z\in\Omega_-$.

We then turn to definitions of the classical Hardy spaces over $\mathbb{C}_+$, 
the upper half complex plane. If $f(z)$ is analytic on 
$\mathbb{C}_+$ and $y>0$, we define
\[m(f,y)=\Big(\int_{\mathbb{R}} \lvert f(x+\mathrm{i}y)\rvert^p 
\,\mathrm{d}x\Big)^{\frac1p},\quad \text{if } 0<p<\infty,\]
or
\[m(f,y)= \sup_{x\in\mathbb{R}}\lvert f(x+\mathrm{i}y)\rvert,\quad 
\text{if } p=\infty,\]
then Hardy space $H^p(\mathbb{C}_+)$ is defined as
\[H^p(\mathbb{C}_+)
= \big\{f(z)\text{ is analytic on }\mathbb{C}_+\colon
\sup_{y>0} m(f,y)<\infty\big\},\]
and for $f(z)\in H^p(\mathbb{C}_+)$, define
\[\lVert f\rVert_{H^p(\mathbb{C}_+)}= \sup_{y>0} m(f,y).\]

Let $x_0\in\mathbb{R}$, $\alpha>0$ and 
$\Gamma_\alpha(x_0)=\{(x,y)\in\mathbb{C}_+\colon |x-x_0|<\alpha y\}$,
we say that a function $f(z)$, defined on $\mathbb{C}_+$, 
has non-tangential boudary limit~$l$ at $x_0$, if
\[\lim_{\substack{z\in\Gamma_\alpha(x_0),\\ z\to x_0}}f(z)=l,\quad
\text{for any }\alpha>0.\]
It is well-known that every function $f(z)\in H^p(\mathbb{C}_+)$, 
where $0<p\leqslant\infty$, has non-tangential boundary limit a.e.\@ 
on the real axis. We usually denote the limit function as $f(x)$ 
for $x\in\mathbb{R}$, then $\lVert f\rVert_{H^p(\mathbb{C}_+)}
  = \lVert f\rVert_{L^p(\mathbb{R})}$.
If $1\leqslant p\leqslant\infty$, then $f(z)\in H^p(\mathbb{C}_+)$ 
is both the Cauchy and Poisson integral of $f(x)$~\cite{De10}, 
that is, 
\[f(z)
  = \frac1{2\pi\mathrm{i}} \int_{\mathbb{R}}\frac{f(t)\,\mathrm{d}t}{t-z}
  = \frac1{\pi} \int_{\mathbb{R}} \frac{yf(t)\,\mathrm{d}t}{(x-t)^2+y^2},\]
for $z=x+\mathrm{i}y\in\mathbb{C}_+$.

By the definitions above, if $a(u)=0$, then $H^p(\Omega_+)= H^p(\mathbb{C}_+)$, 
thus we may consider $H^p(\mathbb{C}_+)$ as a special case of $H^p(\Omega_+)$. 
It is not difficult to verify that $H^p(\Omega_+)$ is a linear vector space 
equipped with the norm $\lVert\cdot\rVert_{H^p(\Omega_+)}$ 
if $1\leqslant p\leqslant\infty$, or with the metric 
$\lVert\cdot\rVert_{H^p(\Omega_+)}^p$ if $0<p<1$. In this paper, 
we will always assume $0<p<\infty$, if not stated otherwise.

Let $\zeta$, $\zeta_0\in\Gamma$, we define
\[K_z(\zeta,\zeta_0)
  = \frac1{2\pi\mathrm{i}}\bigg(\frac1{\zeta-(\zeta_0+z)}
       - \frac1{\zeta-(\zeta_0-z)}\bigg),\]
for $z\in\mathbb{C}$ and $z\neq \pm(\zeta-\zeta_0)$, 
then $K_z(\zeta,\zeta_0)$ is well-defined and we could write 
\begin{equation}\label{equ-170623-1150}
  K_z(\zeta,\zeta_0)
  = \frac1{\pi\mathrm{i}}\cdot\frac{z}{(\zeta-\zeta_0)^2-z^2},
\end{equation}
We could also verify that, if $\zeta_0+z\in\Omega_+$ and 
$\zeta_0-z\in\Omega_-$, then 
\[\int_{\Gamma} K_z(\zeta,\zeta_0)\,\mathrm{d}\zeta= 1.\]

Since $\Omega_+$ is an open and simply-connected subset of the complex plane, 
there exists a conformal representation $\Phi$ from $\mathbb{C}_+$ 
onto $\Omega_+$, which extends to an increasing homeomorphism of 
$\mathbb{R}$ onto $\Gamma$, that is $\mathrm{Re\,}\Phi'(x)>0$, 
for $x\in\mathbb{R}$ a.e.\@. Define 
$\Sigma= \{x+\mathrm{i}y\in\mathbb{C}\colon 
    x>0, \lvert y\rvert\leqslant Mx\}$, then
$\Phi'(x)\in \overline{\Sigma}$ a.e.\@.

Denote the inverse of $\Phi(z)$ as $\Psi(w)\colon \Omega_+\to\mathbb{C}_+$, 
then 
\[\Phi\big(\Psi(w)\big)=w, \quad\text{for all } w\in\Omega_+,\]
and
\[\Psi\big(\Phi(z)\big)=z, \quad\text{for all } z\in\mathbb{C}_+.\]
Besides, if $w=\Phi(z)$ for $z\in\overline{\mathbb{C}_+}$, 
then $\Phi'(z)\cdot\Psi'(w)=1$, 
thus $\mathrm{Re}\,\Psi'(\zeta)>0$ for $\zeta\in\Gamma$ a.e.\@.
More details about $\Phi$ have been proved by Kenig in \cite{Ke90}, 
which are shown below.
\begin{lemma}[\cite{Ke90}]\label{lem-170706-2130}
  {\rm (i)} $\Phi$ extends to $\overline{\mathbb{C}_+}$ as a homeomorphism 
  onto $\overline{\Omega_+}$.
  
  {\rm (ii)} $\Phi'$ has a non-tangential limit almost everywhere, and this limit
  is different from $0$ almost everywhere.
  
  {\rm (iii)} $\Phi$ preserves angles at almost every boundary point, 
  and so does $\Psi$ (here almost every refers to $|\mathrm{d}\zeta|$).
  
  {\rm (iv)} $\Phi(x)$, $x\in\mathbb{R}$, is absolutely continuous 
  when restricted to any finite interval, and hence $\Phi'(x)$ exists 
  almost everywhere, and is locally integrable, moreover, 
  for almost every $x\in\mathbb{R}$, $\Phi'(x) = \lim_{z\to x} \Phi'(z)$,
  where $z\in\mathbb{C}_+$ converges nontangentially to $x$.
  
  {\rm (v)} Sets of measure $0$ on $\mathbb{R}$ correspond to sets 
  of measure $0$ (with respect to $|\mathrm{d}\zeta|$) on $\Gamma$, 
  and vice versa.
  
  {\rm (vi)} At every point where $\Phi'(x)$ exists and is different from $0$, 
  it is a vector tangent to the curve $\Gamma$ at the point $\Phi(x)$, 
  and hence $|\arg\Phi'(x)|\leqslant \arctan M$ for almost every $x$.
\end{lemma}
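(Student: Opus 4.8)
The plan is to read every clause of the lemma as an assertion about the boundary behaviour of the conformal map~$\Phi$, and to obtain it from the classical theory of conformal maps onto Jordan domains with (locally) rectifiable boundary; this is the route followed in \cite{Ke90}, so I only indicate the main steps. From the hypotheses on~$a$ I would isolate two structural facts. First, on the Riemann sphere $\mathbb{C}_+$ and $\Omega_+$ are Jordan domains, bounded respectively by the Jordan curves $\mathbb{R}\cup\{\infty\}$ and $\Gamma\cup\{\infty\}$. Second, every sub-arc of $\Gamma$ lying over a finite interval $I\subset\mathbb{R}$ is rectifiable, since $|\mathrm{d}\zeta|=\sqrt{1+{a'}^2}\,\mathrm{d}u\le\sqrt{1+M^2}\,\mathrm{d}u$; indeed $\Gamma$ is a chord-arc curve, because such an arc has length at most $\sqrt{1+M^2}$ times that of its chord.

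Granting the first fact, part~(i) is Carathéodory's extension theorem: since $\partial\Omega_+=\Gamma\cup\{\infty\}$ is locally connected (in fact a Jordan curve), $\Phi$ extends to a homeomorphism of the closed Jordan regions on the sphere; sending $\infty$ to $\infty$ and deleting that point gives the stated homeomorphism of $\overline{\mathbb{C}_+}$ onto $\overline{\Omega_+}$, in agreement with the already-noted fact that $\Phi|_{\mathbb{R}}$ is an increasing homeomorphism onto $\Gamma$. Granting the second fact, parts~(iv) and~(v) follow from the classical theorem that a conformal map carrying a boundary arc onto a rectifiable arc is absolutely continuous on that arc, with $\Phi'$ integrable along it, of local $H^1$ type, and possessing nontangential limits almost everywhere (cf.\ \cite{Du70}): applying this over each finite interval $I$, whose image $\Phi(I)\subset\Gamma$ is rectifiable, yields local absolute continuity of $\Phi$, local integrability and almost-everywhere existence of $\Phi'(x)$ as a nontangential limit, and — via the F.~and M.~Riesz circle of ideas — the two-way correspondence between $\mathrm{d}u$-null sets on $\mathbb{R}$ and $|\mathrm{d}\zeta|$-null sets on $\Gamma$. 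Part~(ii) then follows: $\Phi'$, being locally in $H^1$, has nontangential limits almost everywhere, and that this limit is nonzero almost everywhere is (given the absolute continuity of $\Phi$) the same assertion as the non-trivial, F.~and M.~Riesz, half of~(v) — namely that $\Phi$ cannot carry a set of positive Lebesgue measure onto a set of zero $|\mathrm{d}\zeta|$-measure — since $\int_E|\Phi'|\,\mathrm{d}u=0$ for $E=\{x:\Phi'(x)=0\}$.

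For parts~(iii) and~(vi): at almost every $x_0\in\mathbb{R}$ the nontangential limit $\Phi'(x_0)$ exists, is finite and nonzero, and $\Phi$ is differentiable at $x_0$ in the boundary sense, so $\Phi(z)-\Phi(x_0)=\Phi'(x_0)\,(z-x_0)\,\big(1+o(1)\big)$ as $z\to x_0$ inside any Stolz angle at $x_0$. Since multiplication by the fixed nonzero number $\Phi'(x_0)$ is a rotation followed by a dilation, it preserves the angle between any two boundary arcs issuing from $x_0$, which is conformality for $\Phi$; taking the two arcs to be sub-intervals of $\mathbb{R}$ shows moreover that $\Phi'(x_0)$ points along the tangent to $\Gamma$ at $\Phi(x_0)$, and since $\Gamma$ is the graph of $a$ this tangent direction is $1+\mathrm{i}a'(u)$ for the corresponding $u$, whence $|\arg\Phi'(x_0)|\le\arctan M$ almost everywhere. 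The statement for $\Psi$ follows from $\Psi=\Phi^{-1}$ and $\Psi'(\zeta)=1/\Phi'(\Psi(\zeta))\neq 0$ almost everywhere, together with another appeal to~(v) to transport the full-measure set from $\mathbb{R}$ to $\Gamma$. I expect the only genuinely substantial point to be part~(ii) — the passage from rectifiability to local membership of $\Phi'$ in $H^1$ and, above all, the ``nonzero almost everywhere'' conclusion — which is exactly where the F.~and M.~Riesz theorem, or in this Lipschitz setting Dahlberg's mutual absolute continuity of harmonic measure and arc length, does the real work; the remaining clauses are either soft topology or formal consequences of~(ii), (iv) and~(v).
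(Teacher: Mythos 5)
The paper offers no proof of this lemma: it is quoted verbatim from Kenig's 1980 paper (his Lemma 1.1 and the surrounding discussion), so there is no argument of the authors' to compare yours against. Your reconstruction is sound and is essentially the standard route — Carath\'eodory extension on the sphere for (i); the rectifiable-boundary theory (absolute continuity of the boundary correspondence, $\Phi'$ of local $H^1$ type, F.~and M.~Riesz) for (ii), (iv), (v); and boundary conformality at points where the angular derivative exists and is nonzero for (iii), (vi). You also get the logical order right: the hard half of (v) and the ``nonzero a.e.'' half of (ii) are the same F.~and M.~Riesz-type statement, and both sit downstream of the local $H^1$ membership of $\Phi'$.

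The one place where I would ask for more is the localization. Since $\Gamma\cup\{\infty\}$ is not rectifiable as a curve on the sphere, you cannot invoke the global theorems for conformal maps onto rectifiable Jordan curves; you need their local versions (absolute continuity on a boundary subarc whose image is rectifiable, the local F.~and M.~Riesz theorem, and a local Fatou/Privalov theorem to get nontangential limits of $\Phi'$ on that subarc). You implicitly acknowledge this by working over finite intervals, but these local statements are genuinely harder than the global ones and should be cited explicitly (e.g.\ Pommerenke, \emph{Boundary Behaviour of Conformal Maps}, Ch.~6) rather than folded into ``cf.\ Duren'', whose Theorem 3.12 is the global disk statement. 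With that reference supplied, the sketch is complete; Kenig's own treatment additionally exploits the sector condition $\Phi'(z)\in\overline{\Sigma}$ special to Lipschitz graphs, which gives (ii) and (vi) more directly, but your more general route works.
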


We now consider a transform $T$ from $H^p(\Omega_+)$ to 
analytic functions on $\mathbb{C}_+$, which is defined by 
\begin{equation}\label{equ-170602-1910}
  TF(z)= F\big(\Phi(z)\big)\cdot (\Phi'(z))^{\frac1p}, \quad
    \text{for } F(w)\in H^p(\Omega_+),
\end{equation}
then, obviously, $T$ is linear and one-to-one. One of the main results 
of this paper is the following theorem, which shows that 
$H^p(\Omega_+)$ is isomorphic to $H^p(\mathbb{C}_+)$.
\begin{theorem}\label{thm-170803-1520}
  If $0<p<\infty$ and $T$ is defined as above, 
  then $T\colon H^p(\Omega_+)\to H^p(\mathbb{C}_+)$ 
  is linear, one-to-one, onto and bounded. Its inverse 
  $T^{-1}\colon H^p(\mathbb{C}_+)\to H^p(\Omega_+)$ is also bounded.
\end{theorem}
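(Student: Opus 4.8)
The plan is to reduce the statement to two norm bounds and to obtain them by transporting the problem to the boundary via the conformal map, where the Smirnov‑domain structure of $\Omega_+$ does the work. First, since $\Phi$ is conformal, $\Phi'$ is zero‑free on the simply connected set $\mathbb{C}_+$, so $(\Phi')^{1/p}$ (and likewise $(\Psi')^{1/p}$) is a well‑defined single‑valued analytic function, and by Lemma~\ref{lem-170706-2130}(vi) its argument is bounded by $\tfrac1p\arctan M$ on $\mathbb{R}$; this makes the Jacobian‑type factors appearing in all the changes of variables below comparable to $1$ with constants depending only on $M$ and $p$. Injectivity of $T$ is immediate from the bijectivity of $\Phi$. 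Moreover, if we can show that $T$ maps $H^p(\Omega_+)$ boundedly into $H^p(\mathbb{C}_+)$ and that the formally defined inverse $T^{-1}f:=f(\Psi)\cdot(\Psi')^{1/p}$ maps $H^p(\mathbb{C}_+)$ boundedly into $H^p(\Omega_+)$, then, since $T(T^{-1}f)=f$ and $T^{-1}(TF)=F$ hold pointwise (by the chain rule and $\Phi'(z)\Psi'(\Phi(z))=1$), $T$ is onto with bounded inverse and we are done. So everything comes down to the estimates $\lVert TF\rVert_{H^p(\mathbb{C}_+)}\leqslant C\lVert F\rVert_{H^p(\Omega_+)}$ and $\lVert T^{-1}f\rVert_{H^p(\Omega_+)}\leqslant C\lVert f\rVert_{H^p(\mathbb{C}_+)}$ with $C=C(M,p)$.

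For the first estimate I would use the inclusion $H^p(\Omega_+)\subseteq E^p(\Omega_+)$ established earlier in the paper together with the fact that a Lipschitz domain is a Smirnov domain, so that $\log\Phi'$ has no singular inner part and $\Phi',\,(\Phi')^{1/p}$ lie in the Nevanlinna--Smirnov class $N^+(\mathbb{C}_+)$. Since $F\in E^p(\Omega_+)$, the subharmonic function $\lvert F\rvert^p$ has a harmonic majorant on $\Omega_+$ and $TF=(F\circ\Phi)\cdot(\Phi')^{1/p}$ turns out to lie in $N^+(\mathbb{C}_+)$; for a function $g\in N^+(\mathbb{C}_+)$ there is no gap between its boundary $L^p$‑norm and its Hardy norm, so it suffices to bound $\int_{\mathbb{R}}\lvert TF(x)\rvert^p\,\mathrm{d}x$, where $TF(x)=(F\circ\Phi)^{*}(x)\,\Phi'(x)^{1/p}$ are the a.e.\ boundary values. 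The change of variables $\zeta=\Phi(x)$ --- legitimate by Lemma~\ref{lem-170706-2130}(iv),(v) with $\lvert\mathrm{d}\zeta\rvert=\lvert\Phi'(x)\rvert\,\mathrm{d}x$ --- rewrites this integral as $\int_{\Gamma}\lvert F^{*}(\zeta)\rvert^p\,\lvert\mathrm{d}\zeta\rvert$ for the transplanted boundary function $F^{*}$, and a Fatou‑type comparison, obtained by running the same argument on the translates $F_\tau(w)=F(w+\mathrm{i}\tau)$ (which are analytic on a neighbourhood of $\overline{\Omega_+}$ and satisfy $\int_{\Gamma}\lvert F_\tau\rvert^p\lvert\mathrm{d}\zeta\rvert=\int_{\Gamma_\tau}\lvert F\rvert^p\lvert\mathrm{d}w\rvert\leqslant\lVert F\rVert_{H^p(\Omega_+)}^p$) and letting $\tau\downarrow0$, gives $\int_{\Gamma}\lvert F^{*}\rvert^p\lvert\mathrm{d}\zeta\rvert\leqslant\lVert F\rVert_{H^p(\Omega_+)}^p$, hence the claim.

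For the second estimate the argument is the mirror image: $f\in H^p(\mathbb{C}_+)\subseteq N^+(\mathbb{C}_+)$ and $\Psi'$ lies in the Smirnov class of $\Omega_+$ (again by the Smirnov property, now of $\mathbb{C}_+$), so $T^{-1}f=f(\Psi)\cdot(\Psi')^{1/p}$ lies in the Smirnov class of $\Omega_+$ with boundary values $f(\Psi(\zeta))\,\Psi'(\zeta)^{1/p}$ a.e.\ on $\Gamma$, and the change of variables $x=\Psi(\zeta)$ gives $\int_{\Gamma}\lvert T^{-1}f(\zeta)\rvert^p\,\lvert\mathrm{d}\zeta\rvert=\int_{\mathbb{R}}\lvert f(x)\rvert^p\,\mathrm{d}x=\lVert f\rVert_{H^p(\mathbb{C}_+)}^p$. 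The remaining point --- and the one where the Lipschitz hypothesis is genuinely needed --- is to upgrade this control of the boundary integral to control of $\sup_{\tau>0}\int_{\Gamma_\tau}\lvert T^{-1}f\rvert^p\lvert\mathrm{d}w\rvert$, i.e.\ to show that a function in the Smirnov class of $\Omega_+$ with $L^p(\Gamma)$ boundary values already belongs to $H^p(\Omega_+)$ with comparable norm. For this I would represent $T^{-1}f$ on the Lipschitz subdomain $\Omega_\tau=\{u+\mathrm{i}v:v>a(u)+\tau\}$ by a Cauchy‑type integral of its boundary values on $\Gamma$, using that on a Lipschitz curve harmonic measure is comparable to arc length and that the singular integral built from the kernel $K_z(\zeta,\zeta_0)$ of~\eqref{equ-170623-1150} is bounded on $L^p(\Gamma)$ for $p>1$ (for $0<p\leqslant1$ one first factors the function into a product of members of a larger Hardy space, as usual), and then estimate $\int_{\Gamma_\tau}\lvert T^{-1}f\rvert^p\lvert\mathrm{d}w\rvert$ by $\int_{\Gamma}\lvert T^{-1}f(\zeta)\rvert^p\lvert\mathrm{d}\zeta\rvert$ uniformly in $\tau$.

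I expect this last upgrade to be the main obstacle. The underlying reason is that, in contrast to Duren's setting where $\lvert\Phi'\rvert$ is bounded above and below and $T$ is essentially an isometry, here $\lvert\Phi'\rvert$ is unbounded: the naive change of variables no longer closes the argument, one is forced to pass through the Smirnov class, and it is precisely the Lipschitz geometry (comparability of harmonic measure and arc length, $L^p$‑boundedness of the Cauchy integral on $\Gamma$) that is needed both to keep the unbounded factors $(\Phi')^{1/p}$ and $(\Psi')^{1/p}$ under control and to exclude any spurious contribution ``at infinity'' from the harmonic majorants used above.
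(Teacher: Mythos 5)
Your proposal is correct and follows essentially the same route as the paper: the forward bound rests on $H^p(\Omega_+)\subset E^p(\Omega_+)$ followed by a boundary-value/Fatou comparison giving $\lVert T\rVert\leqslant 1$ (your Smirnov-class $N^+$ packaging is equivalent to the harmonic-majorant argument of Theorem~\ref{thm-170825-1400} via Lemmas~\ref{lem-170801-1430} and~\ref{lem-170801-1440}), while the inverse bound combines the $p>1$ Cauchy-integral theory on Lipschitz curves with the Blaschke/zero-free power factorization for $0<p\leqslant 1$, exactly as in Proposition~\ref{pro-170801-1600}. The only real difference is that you sketch a re-derivation of the $p>1$ inverse estimate that the paper simply imports from \cite{DL17} as Lemma~\ref{lem-170801-1520}.
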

We will also prove that every function in $H^p(\Omega_+)$ has 
non-tangential boundary limit a.e.\@ on $\Gamma$, and, if $1\leqslant p< \infty$, 
is the Cauchy integral of its boundary function. 
See Theorem~\ref{thm-170801-1650} and Theorem~\ref{thm-170803-1020} for details.

\section{Proof of the Isomorphic Theorem}

Let $D$ be an arbitrary simply connected domain with at least 
two boundary points, and remeber that $0<p<\infty$ by default.
A function~$f$ analytic on $D$ is said to be of class~$E^p(D)$ 
if there exists a sequence of rectifiable Jordan curves~$C_1$, 
$C_2$, $\ldots$ in $D$, which eventually 
surround each compact subdomain of $D$, such that
\[\sup_{n\geqslant 1}\int_{C_n} |f(z)|^p |\mathrm{d}z|< \infty.\]

\begin{lemma}[\cite{Du70}]\label{lem-170801-1420}
  Let $\phi$ map the unit disk $\mathbb{D}=\{\xi\in\mathbb{C}\colon |\xi|<1\}$ 
  conformally onto $D$, and $\Gamma_r$ be the image under $\phi$ of 
  the circle~$\{|\xi|=r<1\}$. Then for each function~$f\in E^p(D)$,
  \[\sup_{0<r<1} \int_{\Gamma_r} |f(z)|^p|\mathrm{d}z|< \infty.\]
\end{lemma}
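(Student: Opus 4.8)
The plan is to transport the problem to the unit disk, where the assertion becomes membership of a naturally associated analytic function in $H^p(\mathbb{D})$. First I would set $F(\xi)=f(\phi(\xi))\,(\phi'(\xi))^{1/p}$; a single-valued branch of $(\phi')^{1/p}$ exists because $\phi'$ is analytic and zero-free on the simply connected domain $\mathbb{D}$, so $F$ is analytic on $\mathbb{D}$. The substitution $z=\phi(\xi)$ gives $|F(\xi)|^p=|f(\phi(\xi))|^p\,|\phi'(\xi)|$, hence $\int_{\Gamma_r}|f(z)|^p\,|\mathrm{d}z|=\int_{|\xi|=r}|F(\xi)|^p\,|\mathrm{d}\xi|=r\int_0^{2\pi}|F(r\mathrm{e}^{\mathrm{i}\theta})|^p\,\mathrm{d}\theta$, and likewise $\int_{C_n}|f|^p\,|\mathrm{d}z|=\int_{\gamma_n}|F|^p\,|\mathrm{d}\xi|$ where $\gamma_n:=\phi^{-1}(C_n)$. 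Since $f\in E^p(D)$ there is an exhausting sequence of rectifiable Jordan curves $C_n\subset D$ with $A:=\sup_n\int_{C_n}|f|^p\,|\mathrm{d}z|<\infty$; because $\phi$ is a conformal homeomorphism and each $C_n$ is compact in $D$ (so $\phi^{-1}$ is Lipschitz near $C_n$), the $\gamma_n$ are again rectifiable Jordan curves, they eventually surround every compact subset of $\mathbb{D}$, and $\int_{\gamma_n}|F|^p\,|\mathrm{d}\xi|\le A$. Thus it suffices to prove $\sup_{0<r<1}\int_0^{2\pi}|F(r\mathrm{e}^{\mathrm{i}\theta})|^p\,\mathrm{d}\theta\le A$, i.e.\ that $F\in H^p(\mathbb{D})$.

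Next I would work inside each Jordan interior. For all large $n$ the origin lies in the interior $D_n$ of $\gamma_n$, and $D_n\subset\mathbb{D}$ (since $\gamma_n\subset\mathbb{D}$ forces the unbounded complementary component of $\gamma_n$ to contain $\mathbb{C}\setminus\overline{\mathbb{D}}$). Let $\psi_n\colon\mathbb{D}\to D_n$ be conformal with $\psi_n(0)=0$ and $\psi_n'(0)>0$. Since $\gamma_n$ is a \emph{rectifiable} Jordan curve, the classical theory of conformal maps onto rectifiable Jordan domains (F.\ and M.\ Riesz, Smirnov; see~\cite{Du70}) applies: $\psi_n$ extends to a homeomorphism of $\overline{\mathbb{D}}$ onto $\overline{D_n}$ whose boundary values parametrise $\gamma_n$ absolutely continuously, $\psi_n'\in H^1(\mathbb{D})$, and $\psi_n'(\rho\mathrm{e}^{\mathrm{i}\theta})\to\psi_n'(\mathrm{e}^{\mathrm{i}\theta})$ in $L^1(\mathrm{d}\theta)$ as $\rho\to1$. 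Put $G_n:=(F\circ\psi_n)\,(\psi_n')^{1/p}$, analytic on $\mathbb{D}$, with the branch normalised by $(\psi_n'(0))^{1/p}>0$. Then $|G_n|^p$ is subharmonic, so $\rho\mapsto\int_0^{2\pi}|G_n(\rho\mathrm{e}^{\mathrm{i}\theta})|^p\,\mathrm{d}\theta$ is non-decreasing; and since $F$ is bounded and uniformly continuous on the compact set $\overline{D_n}\subset\mathbb{D}$, the $L^1$-convergence of $\psi_n'$ lets me take the limit $\rho\to1$ and identify it with $\int_0^{2\pi}|F(\psi_n(\mathrm{e}^{\mathrm{i}\theta}))|^p\,|\psi_n'(\mathrm{e}^{\mathrm{i}\theta})|\,\mathrm{d}\theta=\int_{\gamma_n}|F|^p\,|\mathrm{d}\xi|\le A$. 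Hence $\int_0^{2\pi}|G_n(\rho\mathrm{e}^{\mathrm{i}\theta})|^p\,\mathrm{d}\theta\le A$ for every $\rho\in(0,1)$ and every large $n$.

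Finally I would let $n\to\infty$. Because the $\gamma_n$ eventually surround every compact subset of $\mathbb{D}$ while $D_n\subset\mathbb{D}$, the domains $D_n$---and those of every subsequence---have kernel $\mathbb{D}$ with respect to the origin, so Carath\'eodory's kernel theorem gives $\psi_n\to\mathrm{id}$ and $\psi_n'\to1$ locally uniformly on $\mathbb{D}$; consequently $G_n=(F\circ\psi_n)(\psi_n')^{1/p}\to F$ uniformly on each circle $\{|\xi|=r\}$. Therefore, for every fixed $r\in(0,1)$, $\int_0^{2\pi}|F(r\mathrm{e}^{\mathrm{i}\theta})|^p\,\mathrm{d}\theta=\lim_{n}\int_0^{2\pi}|G_n(r\mathrm{e}^{\mathrm{i}\theta})|^p\,\mathrm{d}\theta\le A$, so $F\in H^p(\mathbb{D})$ and $\sup_{0<r<1}\int_{\Gamma_r}|f|^p\,|\mathrm{d}z|=\sup_{0<r<1}r\int_0^{2\pi}|F(r\mathrm{e}^{\mathrm{i}\theta})|^p\,\mathrm{d}\theta\le A<\infty$.

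The step I expect to be the main obstacle is the passage $\rho\to1$ in the second stage: one must match the boundary value of the monotone circular means of $|G_n|^p$ with the arc-length integral $\int_{\gamma_n}|F|^p\,|\mathrm{d}\xi|$, and this is exactly where rectifiability of the curves is indispensable, being what makes the Riesz--Smirnov theory applicable (it yields $\psi_n'\in H^1$ and the absolutely continuous boundary parametrisation); without rectifiability the conclusion would fail. A secondary point requiring care is verifying that the $D_n$ exhaust all of $\mathbb{D}$, not merely a proper subdomain, so that the Carath\'eodory limit is genuinely the identity.
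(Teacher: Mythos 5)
Your proof is correct. The paper states this lemma without proof, simply citing Duren's book, and your argument is essentially a reconstruction of Duren's own proof of his Theorem~10.1: transport to the disk, use the F.~and M.~Riesz--Smirnov theory on the rectifiable Jordan subdomains (giving $\psi_n'\in H^1$ and the absolutely continuous boundary correspondence) to bound the circular means of $|G_n|^p$ by $A$, then pass to the limit via the Carath\'eodory kernel theorem; so there is nothing substantive to compare beyond noting that your route coincides with the cited one.
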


The subharmonic function~$v(z)$ on $D$ is said to have a harmonic majorant 
if there is a harmonic function~$u(z)$ such that $v(z)\leqslant u(z)$ 
throughout $D$. 

\begin{lemma}[\cite{De10}]\label{lem-170801-1430}
  If $v$ is subharmonic on $\mathrm{D}$, then $v$ has a harmonic majorant 
  if and only if 
  \[\sup_{0<r<1} \int_0^{2\pi} v(r\mathrm{e}^{\mathrm{i}\theta})
        \,\mathrm{d}\theta< \infty.\]
\end{lemma}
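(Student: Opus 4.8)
The plan is to prove both implications separately, with the forward (necessity) direction being routine and the reverse (sufficiency) direction carrying the real content. Throughout I would write $m(r)=\int_0^{2\pi} v(r\mathrm{e}^{\mathrm{i}\theta})\,\mathrm{d}\theta$, which is finite for each $r<1$ because a subharmonic function not identically $-\infty$ is upper semicontinuous (hence bounded above on the circle) and locally integrable. For the necessity direction, suppose $v\leqslant u$ on $\mathbb{D}$ with $u$ harmonic. The mean value property of $u$ gives $\int_0^{2\pi} u(r\mathrm{e}^{\mathrm{i}\theta})\,\mathrm{d}\theta=2\pi u(0)$ for every $r<1$, so $m(r)\leqslant 2\pi u(0)$ and the supremum is finite.

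For the sufficiency direction, assume $\sup_{0<r<1} m(r)=C<\infty$. For each $\rho\in(0,1)$ I would introduce the harmonic function on the subdisk $\{|z|<\rho\}$ obtained as the Poisson integral of the boundary values $v(\rho\mathrm{e}^{\mathrm{i}\theta})$,
\[u_\rho(z)=\frac1{2\pi}\int_0^{2\pi}\frac{\rho^2-|z|^2}{|\rho\mathrm{e}^{\mathrm{i}\theta}-z|^2}\,v(\rho\mathrm{e}^{\mathrm{i}\theta})\,\mathrm{d}\theta,\qquad |z|<\rho.\]
Three facts then drive the argument. First, since $v$ is subharmonic it is dominated on the subdisk by the harmonic function sharing its boundary values, namely $v(z)\leqslant u_\rho(z)$ for $|z|<\rho$. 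Second, the family $\{u_\rho\}$ is nondecreasing in $\rho$: for $\rho_1<\rho_2$ the inequality $v\leqslant u_{\rho_2}$ holds in particular on the circle $|z|=\rho_1$, so the Poisson integral $u_{\rho_1}$ of $v(\rho_1\,\cdot\,)$ is, by monotonicity of the nonnegative Poisson kernel, at most the Poisson integral of $u_{\rho_2}(\rho_1\,\cdot\,)$, which reconstructs $u_{\rho_2}$ itself on $\{|z|<\rho_1\}$; hence $u_{\rho_1}\leqslant u_{\rho_2}$ there. Third, evaluating at the origin, $u_\rho(0)=\frac1{2\pi}m(\rho)\leqslant C/(2\pi)$, so the family is bounded at $0$.

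I would then close with Harnack's principle. Choosing $\rho_n\nearrow 1$, the sequence $u_{\rho_n}$ is increasing and bounded at the origin, so it cannot diverge to $+\infty$; by Harnack's theorem on monotone sequences of harmonic functions it converges locally uniformly on $\mathbb{D}$ to a harmonic function $u$. For any $z\in\mathbb{D}$, picking $\rho_n>|z|$ yields $v(z)\leqslant u_{\rho_n}(z)\leqslant u(z)$, so $u$ is the desired harmonic majorant, completing the equivalence.

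The main obstacle lies entirely in the sufficiency direction, and within it two points deserve care. The first is the pointwise domination $v\leqslant u_\rho$, which I would justify by applying the maximum principle to the subharmonic function $v-u_\rho$ on $\{|z|<\rho\}$, using the upper semicontinuity of $v$ to control its boundary behaviour against the Poisson data. The second is the correct invocation of Harnack's principle: one must verify the dichotomy that a monotone family of harmonic functions either blows up locally uniformly to $+\infty$ or converges to a harmonic limit, with the uniform bound $u_\rho(0)\leqslant C/(2\pi)$ serving precisely to exclude the blow-up alternative.
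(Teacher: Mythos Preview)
The paper does not supply a proof of this lemma: it is stated with the attribution \cite{De10} and used as a black box. Your argument is correct and is exactly the standard proof of this classical fact---construct the least harmonic majorant as the increasing limit of the Poisson extensions $u_\rho$, with Harnack's theorem ruling out blow-up via the bound $u_\rho(0)=m(\rho)/(2\pi)$. There is nothing further to compare.
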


\begin{lemma}[\cite{Ga07}]\label{lem-170801-1440}
  Let $f(z)$ be an analytic function on the upper half plane~$\mathbb{C}_+$, 
  then $|f(z)|^p$ has a harmonic majorant if and only if 
  $f(z)(z+\mathrm{i})^{-\frac2p}\in H^p(\mathbb{C}_+)$.
\end{lemma}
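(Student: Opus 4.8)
The plan is to transport the problem to the unit disk through the Cayley transform and then invoke Lemma~\ref{lem-170801-1430}. Let $\varphi\colon\mathbb{D}\to\mathbb{C}_+$ be the conformal map $\varphi(\xi)=\mathrm{i}\frac{1+\xi}{1-\xi}$, and let $\psi=\varphi^{-1}$. Since $f$ is analytic and $p>0$, the function $|f|^p=\exp(p\log|f|)$ is subharmonic on $\mathbb{C}_+$, and likewise $|f\circ\varphi|^p$ is subharmonic on $\mathbb{D}$. Harmonicity and the majorisation relation are preserved under composition with $\varphi$: if $u$ is harmonic on $\mathbb{C}_+$ with $u\geqslant|f|^p$, then $u\circ\varphi$ is harmonic on $\mathbb{D}$ with $u\circ\varphi\geqslant|f\circ\varphi|^p$, and conversely with $\psi$. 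Hence $|f|^p$ has a harmonic majorant on $\mathbb{C}_+$ if and only if $|f\circ\varphi|^p$ has one on $\mathbb{D}$, which by Lemma~\ref{lem-170801-1430} (applied to $v=|f\circ\varphi|^p$) is equivalent to $\sup_{0<r<1}\int_0^{2\pi}|f(\varphi(r\mathrm{e}^{\mathrm{i}\theta}))|^p\,\mathrm{d}\theta<\infty$, i.e.\ to $f\circ\varphi\in H^p(\mathbb{D})$.

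It thus remains to check that $f\circ\varphi\in H^p(\mathbb{D})$ if and only if $g:=f(z)(z+\mathrm{i})^{-2/p}\in H^p(\mathbb{C}_+)$. I will use the classical isometric isomorphism between the half-plane and disk Hardy spaces: $h\in H^p(\mathbb{C}_+)$ if and only if $(h\circ\varphi)\cdot(\varphi')^{1/p}\in H^p(\mathbb{D})$ (on the boundary this is the substitution $x=\varphi(\mathrm{e}^{\mathrm{i}\theta})$, $|\mathrm{d}x|=|\varphi'(\mathrm{e}^{\mathrm{i}\theta})|\,\mathrm{d}\theta$, together with the corresponding identification of interior means; see \cite{Du70,Ga07}). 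A direct computation gives $\varphi(\xi)+\mathrm{i}=\frac{2\mathrm{i}}{1-\xi}$ and $\varphi'(\xi)=\frac{2\mathrm{i}}{(1-\xi)^2}=\frac1{2\mathrm{i}}\,(\varphi(\xi)+\mathrm{i})^2$, so, fixing compatible holomorphic determinations of the fractional powers on $\mathbb{D}$ (possible because $\varphi'$ and $\varphi+\mathrm{i}$ are zero-free there),
\[
  (g\circ\varphi)(\xi)\cdot(\varphi'(\xi))^{1/p}
  = f(\varphi(\xi))\,(\varphi(\xi)+\mathrm{i})^{-2/p}\,(\varphi'(\xi))^{1/p}
  = c\,f(\varphi(\xi)),\qquad c:=(2\mathrm{i})^{-1/p}\neq0 .
\]
Therefore $g\in H^p(\mathbb{C}_+)$ if and only if $(g\circ\varphi)(\varphi')^{1/p}\in H^p(\mathbb{D})$, hence if and only if $f\circ\varphi\in H^p(\mathbb{D})$; combining this with the previous paragraph yields the claim.

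The routine parts are the two changes of variable and the branch bookkeeping: all powers occur for zero-free holomorphic functions on simply connected domains, and moreover $z+\mathrm{i}$ stays in the upper half-plane for $z\in\mathbb{C}_+$ while $1-\xi$ stays in the right half-plane for $\xi\in\mathbb{D}$, so one may work with principal branches throughout. The only delicate point, which I would cite from \cite{Du70} and \cite{Ga07} rather than reprove, is the equivalence between membership in $H^p(\mathbb{C}_+)$ as defined through the horizontal lines $\{\mathrm{Im}=y\}$ and the boundedness of the means of $|h|^p\,|\varphi'|$ over the $\varphi$-images of the circles $\{|\xi|=r\}$; since $|\varphi'|$ is neither bounded above nor bounded below, this does not reduce to the elementary two-sided comparison available for $E^p(\Omega_+)$, but it is standard for the half-plane.
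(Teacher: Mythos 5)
The paper offers no proof of this lemma at all---it is imported verbatim from \cite{Ga07}---so there is no internal argument to compare against; your proposal is a correct and complete version of the standard proof. Both halves of your reduction check out. First, conformal invariance of subharmonicity and of the majorisation relation, combined with Lemma~\ref{lem-170801-1430} applied to the non-negative continuous function $v=|f\circ\varphi|^p$, correctly converts ``$|f|^p$ has a harmonic majorant on $\mathbb{C}_+$'' into ``$f\circ\varphi\in H^p(\mathbb{D})$''. Second, the identity $\varphi'(\xi)=\frac1{2\mathrm{i}}(\varphi(\xi)+\mathrm{i})^2$ is right, so $(g\circ\varphi)(\varphi')^{1/p}$ is a nonzero constant times $f\circ\varphi$; and even if one were careless about which holomorphic branches are chosen, the product $(\varphi+\mathrm{i})^{-2/p}(\varphi')^{1/p}$ has constant modulus $2^{-1/p}$ on the simply connected domain $\mathbb{D}$, which is all that membership in $H^p$ can see. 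The one load-bearing citation is the half-plane/disk correspondence $h\in H^p(\mathbb{C}_+)\Leftrightarrow (h\circ\varphi)(\varphi')^{1/p}\in H^p(\mathbb{D})$; this is a genuine standalone theorem (Duren states it as $(1-w)^{-2/p}h(\varphi(w))\in H^p(\mathbb{D})$, with a proof by Poisson-integral domination that does not pass through the present lemma), so there is no circularity in your argument as written. The only caution worth recording is that this correspondence and the lemma you are proving are very close cousins---some expositions obtain the correspondence \emph{from} the harmonic-majorant characterisation rather than the reverse---so in a fully self-contained development one must fix the logical order and prove one of the two directly.
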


\begin{theorem}\label{thm-170825-1400}
  If $T$ is defined on $E^p(\Omega_+)$ as in \eqref{equ-170602-1910}, 
  then $T(E^p(\Omega_+))\subset H^p(\mathbb{C}_+)$.
\end{theorem}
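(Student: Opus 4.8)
The plan is to transfer the problem to the unit disk via the standard conformal map, apply the harmonic-majorant characterization, and pull everything back. First I would fix $f = TF$ for $F \in E^p(\Omega_+)$, so that $f(z) = F(\Phi(z)) \cdot (\Phi'(z))^{1/p}$ is analytic on $\mathbb{C}_+$. By Lemma~\ref{lem-170801-1440}, showing $f \in H^p(\mathbb{C}_+)$ is equivalent to showing that $|f(z)|^p = |F(\Phi(z))|^p \, |\Phi'(z)|$ has a harmonic majorant on $\mathbb{C}_+$. Composing with the Cayley transform $\omega\colon \mathbb{D} \to \mathbb{C}_+$, this is in turn equivalent (by Lemma~\ref{lem-170801-1430}, after noting that $|f|^p$ is subharmonic and that subharmonicity and the existence of harmonic majorants are conformally invariant) to the boundedness of $\sup_{0<r<1} \int_0^{2\pi} |f(\omega(r e^{\mathrm{i}\theta}))|^p \, \mathrm{d}\theta$, or what is the same, to controlling $\int_{\Gamma_r} |F(w)|^p \,|\mathrm{d}w|$ where $\Gamma_r = (\Phi\circ\omega)(\{|\xi|=r\})$ is the image in $\Omega_+$ of the circle of radius $r$.

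The key observation is that $\Phi \circ \omega$ is a conformal map of $\mathbb{D}$ onto $\Omega_+$, so the curves $\Gamma_r$ are precisely the images under this map of the circles $|\xi| = r$. Hence, by the definition of the class $E^p$ and by Lemma~\ref{lem-170801-1420} applied with $D = \Omega_+$ and $\phi = \Phi\circ\omega$, the hypothesis $F \in E^p(\Omega_+)$ gives exactly
\[
  \sup_{0<r<1} \int_{\Gamma_r} |F(w)|^p \, |\mathrm{d}w| < \infty.
\]
Now I would run the change of variables $w = (\Phi\circ\omega)(\xi)$ on this integral: on the circle $|\xi| = r$ we have $|\mathrm{d}w| = |(\Phi\circ\omega)'(\xi)|\,|\mathrm{d}\xi|$, and writing $\xi = \omega^{-1}(z)$ this unwinds to $\int_0^{2\pi} |F(\Phi(\omega(re^{\mathrm{i}\theta})))|^p\,|\Phi'(\omega(re^{\mathrm{i}\theta}))|\,|\omega'(re^{\mathrm{i}\theta})|\,r\,\mathrm{d}\theta$, which is $\int_0^{2\pi} |f(\omega(re^{\mathrm{i}\theta}))|^p\,|\omega'(re^{\mathrm{i}\theta})|\,r\,\mathrm{d}\theta$ — i.e. exactly the integral of the subharmonic function $|f\circ\omega|^p$ over $|\xi|=r$ against arc-length. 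So the finiteness above is literally the Lemma~\ref{lem-170801-1430} condition for $|f\circ\omega|^p$ on $\mathbb{D}$, hence $|f\circ\omega|^p$ has a harmonic majorant on $\mathbb{D}$, hence $|f|^p$ has one on $\mathbb{C}_+$, hence by Lemma~\ref{lem-170801-1440} $f(z)(z+\mathrm{i})^{-2/p} \in H^p(\mathbb{C}_+)$.

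The main obstacle is the final gap: Lemma~\ref{lem-170801-1440} only yields $f(z)(z+\mathrm{i})^{-2/p} \in H^p(\mathbb{C}_+)$, which is \emph{weaker} than $f \in H^p(\mathbb{C}_+)$, so the existence of a harmonic majorant for $|f|^p$ alone is not quite enough. To close it I would use the extra structure of $\Phi$: since $F \in E^p(\Omega_+) \supset H^p(\Omega_+)$ and, crucially, $\Phi$ arises from a Lipschitz graph, the derivative satisfies $\operatorname{Re}\Phi'(x) > 0$ and $|\arg \Phi'(x)| \leqslant \arctan M$ a.e.\@ (Lemma~\ref{lem-170706-2130}(vi)), so $(\Phi')^{1/p}$ is a well-behaved, non-vanishing analytic factor with controlled argument; combined with the fact (to be used or established elsewhere in the paper) that $\Phi'$ itself lies in a Hardy-type class on $\mathbb{C}_+$ because $\Gamma$ is Lipschitz, one upgrades the harmonic-majorant conclusion to genuine membership in $H^p(\mathbb{C}_+)$ by controlling the growth of $f$ near $\infty$ and ruling out the $(z+\mathrm{i})^{2/p}$ factor. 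I expect this last upgrade — converting "$|f|^p$ has a harmonic majorant" into "$f\in H^p(\mathbb{C}_+)$" using the Lipschitz geometry of $\Phi$ — to be the technical heart of the argument; everything before it is a conformal-invariance bookkeeping exercise combining Lemmas~\ref{lem-170801-1420}, \ref{lem-170801-1430} and \ref{lem-170801-1440}.
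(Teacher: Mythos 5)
There is a genuine gap, and it sits exactly where you flag it --- but your diagnosis of how to close it is wrong, and the fix is much simpler than the ``technical heart'' you anticipate. First, a bookkeeping slip: after the change of variables $w=(\Phi\circ\omega)(\xi)$, the quantity that Lemma~\ref{lem-170801-1420} bounds is the circle integral of $|F(\phi)|^p\,|\phi'|$ with the \emph{full} Jacobian $|\phi'|=|\Phi'(\omega)|\,|\omega'|$, i.e.\ of $|f(\omega(\xi))|^p\,|\omega'(\xi)|$, not of $|f\circ\omega|^p$ alone. By discarding the factor $|\omega'|$ you arrive at ``$|f|^p$ has a harmonic majorant'' with $f=TF$, and from there Lemma~\ref{lem-170801-1440} can only ever give $TF(z)(z+\mathrm{i})^{-2/p}\in H^p(\mathbb{C}_+)$. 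No amount of extra structure on $\Phi$ can upgrade this: the implication ``$|f|^p$ has a harmonic majorant $\Rightarrow f\in H^p(\mathbb{C}_+)$'' is false in general (take $f\equiv 1$), and the Lipschitz bounds on $\arg\Phi'$ do not control the growth of $F(\Phi(z))$ near $z=\infty$. So the last paragraph of your proposal does not produce a proof.

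The correct closure is to keep the Jacobian you dropped. With $\omega=T_1$, $T_1(\xi)=-\mathrm{i}(\xi-1)(\xi+1)^{-1}$, one computes $|T_1'(T_1^{-1}(z))|=\tfrac12|z+\mathrm{i}|^2$, so the subharmonic function on $\mathbb{D}$ whose circle means are bounded is
\[
  v(\xi)=|F(\phi(\xi))|^p|\phi'(\xi)|
  =\tfrac12\,\big|TF(z)\big|^p\,|z+\mathrm{i}|^2
  =\tfrac12\,|g(z)|^p,
  \qquad g(z)=TF(z)\,(z+\mathrm{i})^{2/p},\ z=T_1(\xi).
\]
Lemma~\ref{lem-170801-1430} gives $v$ a harmonic majorant, hence $|g|^p$ has one on $\mathbb{C}_+$, and Lemma~\ref{lem-170801-1440} applied to $g$ (not to $TF$) yields $g(z)(z+\mathrm{i})^{-2/p}=TF(z)\in H^p(\mathbb{C}_+)$ on the nose. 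The weight $(z+\mathrm{i})^{-2/p}$ in Lemma~\ref{lem-170801-1440} is precisely the Cayley Jacobian to the power $1/p$; it is the mechanism that makes the argument close, not an obstacle to be overcome. This is exactly the paper's proof; everything else in your proposal (the reduction via Lemmas~\ref{lem-170801-1420} and \ref{lem-170801-1430}, including the separate treatment of small $r$ needed to get a supremum over all $0<r<1$) matches it.
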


\begin{proof}  
  Let $T_1(\xi)= -\mathrm{i}(\xi-1)(\xi+1)^{-1}$, $\phi(\xi)= \Phi(T_1(\xi))$, 
  then $T_1$ maps $\mathbb{D}$ conformally onto $\mathbb{C}_+$ and 
  $\phi$ maps $\mathbb{D}$ conformally onto $\Omega_+$. 
  Choose $F(w)\in E^p(\Omega_+)$, by Lemma~\ref{lem-170801-1420},
  \[\sup_{0<r<1} \int_{\Gamma_r} |F(w)|^p|\mathrm{d}w|= M_1< \infty,\]
  where $\Gamma_r= \phi(D(0,r))$. Define function $v=|F(\phi)|^p|\phi'|$ 
  on $\mathbb{D}$, then $v$ is non-negative, subharmonic and continuous, and 
  \begin{align*}
    \int_{\Gamma_r} |F(w)|^p|\mathrm{d}w|
    &= \int_{|\xi|=r} |F(\phi(\xi))|^p |\phi'(\xi)| |\mathrm{d}\xi|       \\
    &= \int_{|\xi|=r} v(\xi) |\mathrm{d}\xi|                          \\
    &= r\int_0^{2\pi} v(r\mathrm{e}^{\mathrm{i}\theta})\,\mathrm{d}\theta 
     \leqslant M_1.
  \end{align*}
  
  For $r\in[\frac12,1)$, we then have
  \begin{equation}\label{equ-170801-1435}
    \int_0^{2\pi} v(r\mathrm{e}^{\mathrm{i}\theta})\,\mathrm{d}\theta 
    \leqslant \frac{M_1}{r}
    \leqslant 2M_1.
  \end{equation}
  Since $v$ is non-negative and continous on $\overline{D(0,\frac12)}$, 
  we define
  \[M_2= \max\Big\{v(\xi)\colon |\xi|\leqslant \frac12\Big\},\]
  then for $r\in(0,\frac12)$,
  \begin{equation}\label{equ-170801-1445}
    \int_0^{2\pi} v(r\mathrm{e}^{\mathrm{i}\theta})\,\mathrm{d}\theta 
    \leqslant 2\pi M_2.
  \end{equation}
  Combining inequalities~\eqref{equ-170801-1435} and \eqref{equ-170801-1445}, 
  we have 
  \[\sup_{0<r<1} \int_0^{2\pi} 
        v(r\mathrm{e}^{\mathrm{i}\theta})\,\mathrm{d}\theta
    \leqslant \max\{2M_1,2\pi M_2\}
    < \infty.\]
  Lemma~\ref{lem-170801-1430} implies that $v$ has a harmonic majorant 
  on $\mathbb{D}$, which means that $|F(\phi)|^p|\phi'|\leqslant u$ 
  for a harmonic function~$u$.
  
  Since $\phi=\Phi(T_1)$, $T_1(\xi)= -\mathrm{i}(\xi-1)(\xi+1)^{-1}$, 
  then $T_1'(\xi)= -2\mathrm{i}(\xi+1)^{-2}$ and 
  \[|F(\Phi(T_1))|^p|\Phi'(T_1)T_1'|\leqslant u\quad
    \text{on } \mathbb{D}.\]
  Let $z= T_1(\xi)\in\mathbb{C}_+$, then 
  $\xi= T_1^{-1}(z)= -(z-\mathrm{i})(z+\mathrm{i})^{-1}$,
  \[T_1'(\xi)= T_1'(T_1^{-1}(z))
    = \frac{\mathrm{i}(z+\mathrm{i})^2}2,\]
  and
  \[|F(\Phi(z))|^p|\Phi'(z)|\cdot \frac{|z+\mathrm{i}|^2}2
    \leqslant u(T_1^{-1}(z)).\]
  Let $f(z)= F(\Phi(z))(\Phi'(z))^{\frac1p}(z+\mathrm{i})^{\frac2p}$ 
  for $z\in\mathbb{C}_+$, since $u(T_1^{-1})$ is harmonic, 
  the above inequality shows that 
  $|f(z)|^p$ has a harmonic majorant. By Lemma~\ref{lem-170801-1440},
  \[TF(z)
    = F(\Phi(z))(\Phi'(z))^{\frac1p}
    = \frac{f(z)}{(z+\mathrm{i})^{\frac2p}}
    \in H^p(\mathbb{C}_+),\]
  which shows that $T(E^p(\Omega_+))\in H^p(\mathbb{C}_+)$.
\end{proof}

\begin{proposition}\label{pro-170801-1410}
  For $T$ defined by \eqref{equ-170602-1910}, 
  $T(H^p(\Omega_+))\subset H^p(\mathbb{C}_+)$.
\end{proposition}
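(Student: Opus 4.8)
The plan is to reduce the Proposition to Theorem~\ref{thm-170825-1400} by proving the inclusion $H^p(\Omega_+)\subset E^p(\Omega_+)$; granting this, $T(H^p(\Omega_+))\subset T(E^p(\Omega_+))\subset H^p(\mathbb{C}_+)$ follows at once. So fix $F\in H^p(\Omega_+)$, set $M=\lVert F\rVert_{H^p(\Omega_+)}^p$, and recall $\int_{\Gamma_\tau}|F(w)|^p\,|\mathrm{d}w|\leqslant M$ for every $\tau>0$. The translates $\Gamma_\tau$ are essentially the curves demanded by the definition of $E^p$; the defect to repair is that they are unbounded, hence not Jordan curves. I would therefore truncate and close them: for a large integer $n$, let $C_n$ be the boundary of the region $\{u+\mathrm{i}v\colon R_n'<u<R_n,\ a(u)+\tau_n<v<a(u)+\sigma_n\}$, where $\tau_n=\tfrac1{2n}$, $\sigma_n=cn$ with $c=c(M)$ chosen (using $|a(u)|\leqslant|a(0)|+M|u|$) large enough that the arc of $\Gamma_{\sigma_n}$ over $[-2n,2n]$ lies above the line $\mathrm{Im}\,w=n$, and $R_n\in[n,2n]$, $R_n'\in[-2n,-n]$ are to be fixed below. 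Each $C_n$ is a rectifiable Jordan curve lying in $\Omega_+$, since every one of its points has $v\geqslant a(u)+\tau_n>a(u)$; and since the boxes $\{|u|\leqslant N,\ a(u)+\tfrac1N\leqslant v\leqslant N\}$ exhaust $\Omega_+$, the family $\{C_n\}$ eventually surrounds every compact subset of $\Omega_+$.

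It remains to bound $\int_{C_n}|F|^p\,|\mathrm{d}z|$ uniformly in $n$. The ``bottom'' arc of $C_n$ lies on $\Gamma_{\tau_n}$ and its ``top'' arc on $\Gamma_{\sigma_n}$, so each contributes at most $M$. Only the two vertical sides require work. Writing a point of such a side as $\zeta(u)+\mathrm{i}t$ with $t\in[\tau_n,\sigma_n]$, the key estimate — a direct consequence of the inequality $\int_{\mathbb{R}}|G(u+\mathrm{i}a(u))|^p\,\mathrm{d}u\leqslant\int_\Gamma|G(\zeta)|^p\,|\mathrm{d}\zeta|$ recorded in Section~2, applied with $G=F(\,\cdot+\mathrm{i}t)$ — is that for every $t>0$
\[\int_n^{2n}\bigl|F(\zeta(u)+\mathrm{i}t)\bigr|^p\,\mathrm{d}u
  \leqslant\int_{\mathbb{R}}\bigl|F(\zeta(u)+\mathrm{i}t)\bigr|^p\,\mathrm{d}u
  \leqslant\int_{\Gamma_t}|F(w)|^p\,|\mathrm{d}w|\leqslant M.\]
Integrating in $t$ over $[\tau_n,\sigma_n]$ and applying Fubini,
\[\frac1n\int_n^{2n}\Bigl(\int_{\tau_n}^{\sigma_n}\bigl|F(\zeta(u)+\mathrm{i}t)\bigr|^p\,\mathrm{d}t\Bigr)\,\mathrm{d}u
  \leqslant\frac{M\sigma_n}{n}=cM,\]
so there is an $R_n\in[n,2n]$ whose vertical segment carries $\int_{\tau_n}^{\sigma_n}|F(\zeta(R_n)+\mathrm{i}t)|^p\,\mathrm{d}t\leqslant cM$, and symmetrically an $R_n'\in[-2n,-n]$ for the other side. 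Hence $\sup_n\int_{C_n}|F|^p\,|\mathrm{d}z|\leqslant 2M+2cM<\infty$, so $F\in E^p(\Omega_+)$, and Theorem~\ref{thm-170825-1400} gives $TF\in H^p(\mathbb{C}_+)$.

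The step that needs genuine care is this selection of the vertical sides. A naive choice such as $R_n=n$ would make the side integrals grow like $\log n$ — consistent with the pointwise bound $|F(w)|^p\leqslant CM/\mathrm{dist}(w,\Gamma)$ available for $H^p$ functions — so the averaging argument over $u\in[n,2n]$, which exploits that the $\Gamma_t$-integrals are bounded for \emph{all} $t>0$ and not just for small $t$, is exactly what keeps $\int_{C_n}|F|^p$ bounded.
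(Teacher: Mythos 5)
Your proof is correct and follows essentially the same route as the paper's: both reduce the claim to Theorem~\ref{thm-170825-1400} by showing $H^p(\Omega_+)\subset E^p(\Omega_+)$, and both build the required Jordan curves $C_n$ by truncating the translates $\Gamma_\tau$ and using Fubini on $\iint|F|^p\,\mathrm{d}\lambda$ over a strip to select vertical sides with controlled contribution. The only (immaterial) difference is that the paper picks the cut $u_n$ where the side integral is $<1$ via a vanishing-at-infinity argument, while you pick $R_n\in[n,2n]$ by averaging to get the bound $cM$.
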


\begin{proof}
  By Theorem~\ref{thm-170825-1400}, we only need to verify that 
  $H^p(\Omega_+)\subset E^p(\Omega_+)$ for all $0<p<\infty$.
  Let $F(w)\in H^p(\Omega_+)$ and $E_n=\{\zeta+\mathrm{i}\tau\colon 
    \zeta\in\Gamma, \tau\in[\frac1n,n]\}$ for positive integer~$n$, then 
    \[\iint_{E_n} |F(w)|^p\,\mathrm{d}\lambda(w)
      = \int_{\frac1n}^n\!\!\int_\Gamma |F(\zeta+\mathrm{i}\tau)|^p 
          \,|\mathrm{d}\zeta|\,\mathrm{d}\tau
      \leqslant n\lVert F\rVert_{H^p(\Omega_+)}^p,\]
  where $\mathrm{d}\lambda$ is the area measure on $\mathbb{C}$.
  Fix $n$, since
  \begin{align*}
    \iint_{E_n} |F(w)|^p\,\mathrm{d}\lambda(w)
    &= \int_\Gamma\!\int_{\frac1n}^n |F(\zeta+\mathrm{i}\tau)|^p 
        \,\mathrm{d}\tau\,|\mathrm{d}\zeta|                         \\
    &\geqslant \int_0^{+\infty}\!\!\!\int_{\frac1n}^n \Big(
        |F(\zeta(u)+\mathrm{i}\tau)|^p+ |F(\zeta(-u)+\mathrm{i}\tau)|^p\Big) 
        \,\mathrm{d}\tau\,\mathrm{d}u,
  \end{align*}
  we have
  \[\lim_{u\to +\infty} \int_{\frac1n}^n \Big(
          |F(\zeta(u)+\mathrm{i}\tau)|^p+ |F(\zeta(-u)+\mathrm{i}\tau)|^p\Big) 
          \,\mathrm{d}\tau
     = 0.\]
  Then we could choose $u_n>n$ such that
  \[\int_{\frac1n}^n \Big(|F(\zeta(u_n)+\mathrm{i}\tau)|^p
        + |F(\zeta(-u_n)+\mathrm{i}\tau)|^p\Big)\,\mathrm{d}\tau< 1,\] 
  and define $C_n$ as the boundary of $\{\zeta(u)+\mathrm{i}\tau\colon 
    |u|\leqslant u_n, \tau\in[\frac1n,n]\}$. It follows that
  \[\int_{C_n} |F(w)|^p |\mathrm{d}w|
    \leqslant 2\lVert F\rVert_{H^p(\Omega_+)}^p+1,\]
  and $F(w)\in E^p(\Omega_+)$.
\end{proof}

$T$ is actually a bounded operator, and this fact will be proved right after  
Theorem~\ref{thm-170801-1650}. Before proving the inverse of 
the above proposition, we define the Blaschke product on $\Omega_+$, 
which is similar to that on $\mathbb{C}_+$, and introduce 
a factorization theorem of $H^p(\mathbb{C}_+)$.
 
\begin{lemma}[\cite{Ga07}]\label{lem-170801-1530}
  Let $\{z_n= x_n+\mathrm{i}y_n\}$ be a sequence of points in $\mathbb{C}_+$, 
  such that 
  \[\sum_{n=1}^{\infty}\frac{y_n}{1+|z_n|^2}< \infty,\]
  and $m$ be the number of $z_n$ equal to $\mathrm{i}$. 
  Then the Blaschke product
  \[B(z)= \bigg(\frac{z-\mathrm{i}}{z+\mathrm{i}}\bigg)^m
      \prod_{z_n\neq\mathrm{i}} \frac{|z_n^2+1|}{z_n^2+1}
      \cdot \frac{z-z_n}{z-\overline{z_n}}\]
  converges on $\mathbb{C}_+$, has non-tangential boundary limit $B(x)$ a.e.\@ 
  on $\mathbb{R}$, and the zeros of $B(z)$ are precisely 
  the points $z_n$, both counting multiplicity. Moreover, $|B(z)|<1$ 
  on $\mathbb{C}_+$ and $|B(x)|=1$ a.e.\@ on $\mathbb{R}$.
\end{lemma}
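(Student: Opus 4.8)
The plan is to transport the entire statement to the unit disc by a Cayley transform and then invoke the classical Blaschke product theorem there (as in \cite{Ga07}). Let $\sigma(z)=(z-\mathrm{i})(z+\mathrm{i})^{-1}$, which maps $\mathbb{C}_+$ conformally onto $\mathbb{D}$ and extends to a homeomorphism of $\mathbb{R}\cup\{\infty\}$ onto $\partial\mathbb{D}$ with $\sigma(\mathrm{i})=0$ and $\sigma(\infty)=1$; put $w_n=\sigma(z_n)\in\mathbb{D}$.

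The first step is to recognise the hypothesis as the disc Blaschke condition in disguise. A short computation gives $1-|w_n|^2=4y_n\bigl(|z_n|^2+2y_n+1\bigr)^{-1}$, and since $0<2y_n\leqslant 1+|z_n|^2$ this yields $1-|w_n|^2\asymp y_n(1+|z_n|^2)^{-1}$, while always $1-|w_n|\asymp 1-|w_n|^2$. Hence $\sum_n y_n(1+|z_n|^2)^{-1}<\infty$ is equivalent to $\sum_n(1-|w_n|)<\infty$, so the disc Blaschke product $\widetilde B(w)=w^{m}\prod_{w_n\neq 0}\frac{|w_n|}{w_n}\cdot\frac{w_n-w}{1-\overline{w_n}w}$ converges uniformly on compact subsets of $\mathbb{D}$, has modulus $<1$ there, vanishes exactly at the $w_n$ (with multiplicity), and has non-tangential boundary limits of modulus $1$ almost everywhere on $\partial\mathbb{D}$.

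Next I would check the identity $B(z)=\widetilde B(\sigma(z))$: substituting $w=\sigma(z)$ into $\frac{|w_n|}{w_n}\cdot\frac{w_n-w}{1-\overline{w_n}w}$ and simplifying — using $z_n^2+1=(z_n+\mathrm{i})(z_n-\mathrm{i})$ and $\overline{\zeta}/|\zeta|=|\zeta|/\zeta$ — reproduces exactly the factor $\frac{|z_n^2+1|}{z_n^2+1}\cdot\frac{z-z_n}{z-\overline{z_n}}$, while the $m$-fold zero at $w=0$ contributes $\sigma(z)^m=\bigl((z-\mathrm{i})(z+\mathrm{i})^{-1}\bigr)^m$; the normalising constants in the statement are precisely what makes this equality hold. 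Everything then transfers at once: the product defining $B$ converges uniformly on compact subsets of $\mathbb{C}_+$; $|B(z)|<1$ on $\mathbb{C}_+$, which one also sees directly since $|z-z_n|<|z-\overline{z_n}|$ for $z\in\mathbb{C}_+$ forces each factor to have modulus $|z-z_n|/|z-\overline{z_n}|<1$; and the zero set of $B$ is exactly $\{z_n\}$ counted with multiplicity because $\sigma$ is a bijection. Moreover $\sigma$ carries a non-tangential approach region at $x_0\in\mathbb{R}$ into a Stolz angle at $\sigma(x_0)\in\partial\mathbb{D}$ and, being a M\"obius map, takes arc-length-null subsets of $\partial\mathbb{D}$ to Lebesgue-null subsets of $\mathbb{R}$ and conversely; hence the a.e.\ existence of non-tangential limits of $\widetilde B$, together with $|\widetilde B|=1$ a.e.\ on $\partial\mathbb{D}$, gives that $B$ has a non-tangential limit $B(x)$ a.e.\ on $\mathbb{R}$ with $|B(x)|=1$ a.e.

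The routine ingredients are the product convergence and the bound $|B|<1$; the step that genuinely needs care is the boundary transfer — verifying that $\sigma$ sends non-tangential approach regions at real points to Stolz angles at the corresponding points of $\partial\mathbb{D}$ (this holds away from the single exceptional correspondence $\infty\mapsto 1$, a null set) and that it is measure-equivalent on the two boundaries — together with the bookkeeping of the $m$ factors produced by the $z_n$ equal to $\mathrm{i}$. As an alternative, the equality $|B(x)|=1$ a.e.\ can be obtained without the Stolz-angle discussion from $B\in H^\infty(\mathbb{C}_+)$, the unimodularity $|B_N(x)|=1$ of the partial products on $\mathbb{R}$, the local uniform convergence $B/B_N\to1$ on $\mathbb{C}_+$, and the standard harmonic-majorant estimate for bounded analytic functions.
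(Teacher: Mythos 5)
Your proof is correct, and in fact the paper gives no proof of this lemma at all---it is quoted verbatim from \cite{Ga07}. Your Cayley-transform argument (checking $1-|w_n|^2=4y_n(|z_n|^2+2y_n+1)^{-1}$, verifying $B=\widetilde B\circ\sigma$, and transferring the boundary behaviour) is exactly the standard derivation of the half-plane Blaschke product from the disc version, so there is nothing to compare against and nothing to fix.
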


\begin{corollary}\label{lem-170801-1540}
  Let $\{w_n\}$ be a sequence of points in $\Omega_+$, such that 
  \[\sum_{n=1}^\infty \frac{\mathrm{Im}\,\Psi(w_n)}{1+|\Psi(w_n)|^2}
      < \infty,\]
  and $m$ be the number of $\Psi(w_n)$ equal to $\mathrm{i}$. 
  Then the Blaschke product
  \[B(w)= \bigg(\frac{\Psi(w)-\mathrm{i}}{\Psi(w)+\mathrm{i}}\bigg)^m
      \prod_{\Psi(w_n)\neq\mathrm{i}} \frac{|\Psi^2(w_n)+1|}{\Psi^2(w_n)+1}
      \cdot \frac{\Psi(w)-\Psi(w_n)}{\Psi(w)-\overline{\Psi(w_n)}},\]
  converges on $\Omega_+$, has non-tangential boundary limit $B(\zeta)$ a.e.\@ 
  on $\Gamma$, and the zeros of $B(w)$ are precisely 
  the points $w_n$, both counting multiplicity. Moreover, $|B(w)|<1$ 
  on $\Omega_+$ and $|B(\zeta)|=1$ a.e.\@ on $\Gamma$.
\end{corollary}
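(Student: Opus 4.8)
The idea is to transport Lemma~\ref{lem-170801-1530} from $\mathbb{C}_+$ to $\Omega_+$ through the conformal map $\Psi$. Set $z_n=\Psi(w_n)\in\mathbb{C}_+$. The hypothesis $\sum_n \mathrm{Im}\,z_n/(1+|z_n|^2)<\infty$ is exactly the Blaschke condition of Lemma~\ref{lem-170801-1530}, so the associated Blaschke product
\[B_0(z)= \Big(\frac{z-\mathrm{i}}{z+\mathrm{i}}\Big)^m \prod_{z_n\neq\mathrm{i}} \frac{|z_n^2+1|}{z_n^2+1}\cdot\frac{z-z_n}{z-\overline{z_n}}\]
converges on $\mathbb{C}_+$, has non-tangential limit $B_0(x)$ a.e.\@ on $\mathbb{R}$ with $|B_0(x)|=1$ a.e., satisfies $|B_0(z)|<1$ on $\mathbb{C}_+$, and vanishes precisely at the $z_n$ with the prescribed multiplicities. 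By the very definition of $B$ we have $B(w)=B_0(\Psi(w))$, and all assertions of the corollary will be read off from this identity.

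First I would dispose of everything except the boundary behaviour. Since $\Psi\colon\Omega_+\to\mathbb{C}_+$ is conformal it is continuous, so it carries each compact $K\subset\Omega_+$ to a compact subset of $\mathbb{C}_+$; as the partial products of $B_0$ converge uniformly on compacta of $\mathbb{C}_+$, those of $B=B_0\circ\Psi$ converge uniformly on compacta of $\Omega_+$, and $B$ is analytic on $\Omega_+$. Moreover $|B(w)|=|B_0(\Psi(w))|<1$ because $\Psi(w)\in\mathbb{C}_+$. Finally $B(w)=0$ iff $\Psi(w)=z_n$ for some $n$ iff $w=\Phi(z_n)=w_n$; and since $\Psi$ is a local biholomorphism it preserves the order of each zero, while injectivity of $\Psi$ matches the count of $z_n$ in $(\Psi(w_k))_k$ with the count of $w_n$ in $(w_k)_k$, so the multiplicities are those prescribed by Lemma~\ref{lem-170801-1530}.

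The one substantive point is the a.e.\@ existence of the non-tangential boundary limit of $B$ on $\Gamma$, which I would obtain from angle preservation. Let $E\subset\mathbb{R}$ be the union of the null set where $B_0$ has no non-tangential limit or $|B_0|\neq1$ (Lemma~\ref{lem-170801-1530}), the null set where $\Phi'$ fails to exist or vanishes, and the null set where $\Phi$ (hence $\Psi$) does not preserve angles (Lemma~\ref{lem-170706-2130}(ii)--(iv)); by Lemma~\ref{lem-170706-2130}(v), $\Phi(E)$ is $|\mathrm{d}\zeta|$-null in $\Gamma$. Fix $x_0\notin E$ and put $\zeta_0=\Phi(x_0)=\zeta(u_0)$. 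Given $\phi\in(0,\pi/2)$, a ray $\zeta_0+r\mathrm{e}^{\mathrm{i}\theta}$ of the cone $\Omega_\phi(\zeta_0)$ meets the tangent direction $\phi_0$ of $\Gamma$ at $\zeta_0$ in the angle $\theta-\phi_0\in(\phi,\pi-\phi)$, i.e.\@ bounded away from $0$ and $\pi$; since $\Psi$ preserves angles at $\zeta_0$ and maps $\Gamma$ near $\zeta_0$ onto $\mathbb{R}$ near $x_0$, the image curve approaches $x_0$ in a direction making an angle in $(\phi,\pi-\phi)$ with $\mathbb{R}$, hence eventually lies in $\Gamma_\alpha(x_0)$ for $\alpha=\cot\phi$. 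Thus $\Psi(w)\to x_0$ non-tangentially as $w\to\zeta_0$ non-tangentially in $\Omega_+$, so $B(w)=B_0(\Psi(w))\to B_0(x_0)$. Setting $B(\zeta_0):=B_0(x_0)=B_0(\Psi(\zeta_0))$ furnishes a non-tangential boundary limit of $B$ at every $\zeta_0\notin\Phi(E)$, with $|B(\zeta_0)|=|B_0(x_0)|=1$; this settles the remaining claims.

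The main obstacle is the third paragraph: turning ``$\Psi$ preserves angles at $\zeta_0$'' into the genuinely quantitative statement $\Psi(\Omega_\phi(\zeta_0)\cap\{|w-\zeta_0|<\delta\})\subset\Gamma_\alpha(x_0)$ for some $\delta>0$. The natural route is the local linearization $\Psi(w)=x_0+\Psi'(\zeta_0)(w-\zeta_0)+o(|w-\zeta_0|)$, which sends a truncated cone at $\zeta_0$ of axis $\phi_0$ to an approximate cone at $x_0$ of axis $0$; the care needed is that $\Phi'$, hence $\Psi'$, is known only to have a \emph{non-tangential} limit at the boundary point, so the error term must be estimated within the approach region and shown not to destroy the cone inclusion. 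This is precisely where Lemma~\ref{lem-170706-2130}(iii),(vi) and the a.e.\@ existence of a tangent to the Lipschitz curve $\Gamma$ are used, and it is the only place where more than formal manipulation is required.
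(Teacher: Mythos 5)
Your proposal is correct and is essentially the paper's argument: the paper's entire proof is the one-line remark that the corollary ``is obvious if we consider the conformal mapping $w=\Phi(z)$,'' and you have simply filled in the details, correctly identifying that the only non-formal step is transporting non-tangential convergence through $\Psi$ via the angle-preservation statements of Lemma~\ref{lem-170706-2130}. No gap; your elaboration is more explicit than the paper's proof but follows the same route.
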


\begin{proof}
  This corollary of Lemma~\ref{lem-170801-1530} is obvious if we consider 
  the conformal mapping $w=\Phi(z)$ from $\mathbb{C}_+$ onto $\Omega_+$.
\end{proof}

\begin{lemma}[\cite{Ga07}]\label{lem-170801-1550}
  If $f(z)\in H^p(\mathbb{C}_+)$, $f\not\equiv 0$, $\{z_n\}$ are the zeros 
  of $f(z)$, and $B(z)$ is the Blaschke product associated with $\{z_n\}$, 
  Then
  \[g(z)=\frac{f(z)}{B(z)}\neq 0,\quad
    \text{and } \lVert g\rVert_{H^p(\mathbb{C}_+)}
      = \lVert f\rVert_{H^p(\mathbb{C}_+)}.\]
\end{lemma}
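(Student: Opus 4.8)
The plan is to prove the two inequalities $\|g\|_{H^p(\mathbb{C}_+)}\ge\|f\|_{H^p(\mathbb{C}_+)}$ and $\|g\|_{H^p(\mathbb{C}_+)}\le\|f\|_{H^p(\mathbb{C}_+)}$ separately; the first is immediate and the second is where the work lies. By Lemma~\ref{lem-170801-1530} the zeros of $B$ are exactly the $z_n$ counted with multiplicity, so $g=f/B$ has only removable singularities and hence is analytic on $\mathbb{C}_+$; since $f\not\equiv0$ we get $g\not\equiv0$, and $g$ is zero-free because every zero of $f$ already occurs among the $z_n$. (That $\sum y_n/(1+|z_n|^2)<\infty$, so that $B$ exists, is the standard Jensen-formula consequence for nonzero $H^p$ functions and is implicit in the hypothesis.) Since $|B(z)|<1$ on $\mathbb{C}_+$ we have $|g(z)|\ge|f(z)|$ pointwise, so $m(g,y)\ge m(f,y)$ for every $y>0$ and thus $\|g\|_{H^p}\ge\|f\|_{H^p}$.

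For the reverse inequality I would use the partial Blaschke products $B_N$ formed from $z_1,\dots,z_N$ and set $g_N=f/B_N$. Each $B_N$ is a rational function analytic across $\mathbb{R}$ with $|B_N|\equiv1$ on $\mathbb{R}$ and $|B_N|<1$ on $\mathbb{C}_+$, and $B_N\to B$ locally uniformly on $\mathbb{C}_+$, whence $g_N\to g$ locally uniformly. The two claims that drive the argument are: (a) $g_N\in H^p(\mathbb{C}_+)$, and (b) $\|g_N\|_{H^p}=\|f\|_{H^p}$ for every $N$. Claim (b) follows from (a): $g_N$ then has non-tangential boundary values a.e.\ with $|g_N(x)|=|f(x)|/|B_N(x)|=|f(x)|$ a.e.\ on $\mathbb{R}$, so $\|g_N\|_{H^p}=\|g_N\|_{L^p(\mathbb{R})}=\|f\|_{L^p(\mathbb{R})}=\|f\|_{H^p}$. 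Then Fatou's lemma finishes it: for each fixed $y>0$ we have $g_N(x+\mathrm{i}y)\to g(x+\mathrm{i}y)$ pointwise in $x$, so
\[
  m(g,y)^p=\int_{\mathbb{R}}|g(x+\mathrm{i}y)|^p\,\mathrm{d}x
  \le\liminf_{N\to\infty}\int_{\mathbb{R}}|g_N(x+\mathrm{i}y)|^p\,\mathrm{d}x
  =\liminf_{N\to\infty}m(g_N,y)^p\le\|f\|_{H^p}^p,
\]
and taking $\sup_{y>0}$ gives $\|g\|_{H^p}\le\|f\|_{H^p}$; combined with the first inequality this yields $\|g\|_{H^p}=\|f\|_{H^p}$ (in particular $g\in H^p(\mathbb{C}_+)$).

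I expect claim (a) to be the main obstacle, since $1/|B_N|$ blows up near the zeros $z_1,\dots,z_N$. The point is that these zeros sit in a fixed compact subset of $\mathbb{C}_+$ and $f$ vanishes at each $z_j$ to at least the order with which $z_j$ appears in $B_N$, so $g_N$ is analytic, hence bounded, on a neighbourhood of each $z_j$; choosing $r<\frac12\min_j\mathrm{Im}\,z_j$ keeps the closed disks $\overline{D(z_j,2r)}$ inside $\mathbb{C}_+$. Off the union of these disks, $1/|B_N|$ is bounded by a constant $C_N$ (it tends to $1$ as $z\to\mathbb{R}$ and as $|z|\to\infty$, and is bounded on the remaining compact region), so on a horizontal line $\{\mathrm{Im}=y\}$ one splits $\int|g_N(x+\mathrm{i}y)|^p\,\mathrm{d}x$ into the part over the bounded $x$-set meeting some $D(z_j,2r)$, where $|g_N|$ is bounded so the contribution is finite and uniform in $y$, and the rest, where $|g_N(x+\mathrm{i}y)|^p\le C_N^p|f(x+\mathrm{i}y)|^p$ so the contribution is at most $C_N^p\|f\|_{H^p}^p$; hence $\sup_{y>0}m(g_N,y)<\infty$. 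A shorter route to (a), and in fact to the whole lemma, is to transport the problem to the unit disk via the conformal map $T_1$ (with the weight $(T_1')^{1/p}$) used in the proof of Theorem~\ref{thm-170825-1400}, where division by Blaschke products is classical; I would mention this but prefer to keep the argument on $\mathbb{C}_+$.
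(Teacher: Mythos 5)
The paper offers no proof of this lemma --- it is quoted from Garnett's book \cite{Ga07} as a known result. Your argument is correct and is essentially the classical proof from that reference: divide by the finite partial Blaschke products $B_N$, use that $|B_N|\equiv 1$ on $\mathbb{R}$ together with $\lVert h\rVert_{H^p(\mathbb{C}_+)}=\lVert h\rVert_{L^p(\mathbb{R})}$ to get $\lVert f/B_N\rVert_{H^p}=\lVert f\rVert_{H^p}$, and pass to the limit by Fatou's lemma on horizontal lines; your claim (a), verifying $f/B_N\in H^p(\mathbb{C}_+)$ by bounding $1/|B_N|$ off small disks about $z_1,\dots,z_N$ and using analyticity of the quotient on those disks, is exactly the standard integrability check.
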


\begin{lemma}[\cite{DL17}]\label{lem-170801-1520}
  If $1< p< \infty$, then $H^p(\mathbb{C}_+)\subset T(H^p(\Omega_+))$ 
  or $T^{-1}(H^p(\mathbb{C}_+))\subset H^p(\Omega_+)$, and $T^{-1}$ 
  is bounded. Here, $T^{-1}f(w)= f(\Psi(w))(\Psi'(w))^{\frac1p}$, for 
  $w\in\Omega_+$ and $f(z)\in H^p(\mathbb{C}_+)$.
\end{lemma}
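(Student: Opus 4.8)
The plan is to prove the one nontrivial inclusion by showing that for $f\in H^p(\mathbb{C}_+)$ the candidate preimage $F:=T^{-1}f$, i.e.\ $F(w)=f(\Psi(w))\,(\Psi'(w))^{1/p}$, already lies in $H^p(\Omega_+)$, together with the quantitative bound $\lVert F\rVert_{H^p(\Omega_+)}\leqslant C_{p,M}\lVert f\rVert_{H^p(\mathbb{C}_+)}$. Since $TF=f$, this yields at once that $T$ is onto and that $T^{-1}$ is bounded. The strategy has three parts: identify the boundary function of $F$ on $\Gamma$; reconstruct $F$ inside $\Omega_+$ as a Cauchy integral of that boundary function; and control $F$ on each $\Gamma_\tau$ by applying the $L^p$-boundedness of the Cauchy integral along the Lipschitz curve $\Gamma$.

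First I would verify $F\in E^p(\Omega_+)$. Let $\phi$ map $\mathbb{D}$ conformally onto $\Omega_+$ as in Lemma~\ref{lem-170801-1420}; writing $\phi=\Phi\circ T_1$ with $T_1$ the Cayley map from the proof of Theorem~\ref{thm-170825-1400}, a direct computation using $\Psi\circ\Phi=\mathrm{id}$ and $\Psi'\circ\Phi=1/\Phi'$ gives $(F\circ\phi)\,(\phi')^{1/p}=(f\circ T_1)\,(T_1')^{1/p}$, which lies in $H^p(\mathbb{D})$ exactly because $f\in H^p(\mathbb{C}_+)$. Hence $\sup_{0<r<1}\int_{\phi(\{\lvert\xi\rvert=r\})}\lvert F\rvert^p\lvert\mathrm{d}w\rvert<\infty$, and since the curves $\phi(\{\lvert\xi\rvert=r\})$ eventually surround every compact subdomain of $\Omega_+$, we get $F\in E^p(\Omega_+)$. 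In particular $F$ has a nontangential limit a.e.\ on $\Gamma$; by Lemma~\ref{lem-170706-2130} (continuity of $\Phi,\Psi$ up to the boundary, angle preservation a.e., existence a.e.\ of a nonzero nontangential limit of $\Phi'$ hence of $\Psi'$, and correspondence of null sets) this limit equals $F(\zeta)=f(\Psi(\zeta))\,(\Psi'(\zeta))^{1/p}$ for a.e.\ $\zeta\in\Gamma$, and the change of variables $x=\Psi(\zeta)$ yields $\int_\Gamma\lvert F(\zeta)\rvert^p\lvert\mathrm{d}\zeta\rvert=\int_{\mathbb{R}}\lvert f(x)\rvert^p\,\mathrm{d}x=\lVert f\rVert_{H^p(\mathbb{C}_+)}^p$. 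Thus $F\vert_\Gamma\in L^p(\Gamma,\lvert\mathrm{d}\zeta\rvert)$ with precisely the norm we want to propagate inward.

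Next, since $\Omega_+$ is a Lipschitz graph domain, hence of Smirnov type, and $p\geqslant1$, the $E^p$-function $F$ is reproduced by the Cauchy integral of its boundary values, $F(w)=\frac1{2\pi\mathrm{i}}\int_\Gamma\frac{F(\zeta)\,\mathrm{d}\zeta}{\zeta-w}$ for $w\in\Omega_+$ (this can also be derived directly from the Cauchy representation of $f$ on $\mathbb{C}_+$ via $t=\Psi(\zeta)$, using the decay of $f$ at infinity to close the contour). Fixing $\tau>0$ and taking $w=\zeta_0+\mathrm{i}\tau\in\Omega_+$ gives $F(\zeta_0+\mathrm{i}\tau)=\frac1{2\pi\mathrm{i}}\int_\Gamma\frac{F(\zeta)\,\mathrm{d}\zeta}{\zeta-\zeta_0-\mathrm{i}\tau}$, that is, $\zeta_0\mapsto F(\zeta_0+\mathrm{i}\tau)$ is a regularization of the Cauchy singular integral of $F\vert_\Gamma$ along $\Gamma$. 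By Calderón's theorem on the $L^p$-boundedness, for $1<p<\infty$, of the Cauchy integral along a Lipschitz curve (with norm depending only on $p$ and $M$, and uniformly in $\tau$ for these regularizations, a standard consequence of the boundedness of the maximal Cauchy transform), we obtain $\int_{\Gamma_\tau}\lvert F\rvert^p\lvert\mathrm{d}w\rvert=\int_\Gamma\lvert F(\zeta+\mathrm{i}\tau)\rvert^p\lvert\mathrm{d}\zeta\rvert\leqslant C_{p,M}^p\int_\Gamma\lvert F(\zeta)\rvert^p\lvert\mathrm{d}\zeta\rvert=C_{p,M}^p\lVert f\rVert_{H^p(\mathbb{C}_+)}^p$ for every $\tau>0$, which is exactly $\lVert F\rVert_{H^p(\Omega_+)}\leqslant C_{p,M}\lVert f\rVert_{H^p(\mathbb{C}_+)}$.

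I expect the delicate point to be the middle step: passing from mere membership in $E^p(\Omega_+)$ to the genuine Cauchy reproducing formula over the \emph{unbounded} curve $\Gamma$. This requires knowing that the Lipschitz domain is of Smirnov type (so that $\Phi'$ has no singular inner factor obstructing the representation) and a careful treatment of the behaviour at infinity when closing the contour; both follow from the structure recorded in Lemma~\ref{lem-170706-2130} together with the decay of $H^p(\mathbb{C}_+)$-functions, but they must be handled with some care. Apart from that, the only essential analytic input is Calderón's $L^p$-estimate for the Cauchy integral on Lipschitz graphs, and it is precisely this ingredient that forces the hypothesis $1<p<\infty$, since the Cauchy singular integral is unbounded on $L^1(\Gamma)$.
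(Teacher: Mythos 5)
Your opening and closing steps are sound: the identity $(F\circ\phi)(\phi')^{1/p}=(f\circ T_1)(T_1')^{1/p}$ does place $F=T^{-1}f$ in $E^p(\Omega_+)$, the boundary identification and the isometry $\int_\Gamma |F(\zeta)|^p|\mathrm{d}\zeta|=\lVert f\rVert_{H^p(\mathbb{C}_+)}^p$ follow from Lemma~\ref{lem-170706-2130}, and, \emph{granting} the Cauchy representation, the uniform $L^p(\Gamma)$ bound for the regularized Cauchy transforms is indeed what the Calder\'on--Coifman--McIntosh--Meyer theorem supplies for $1<p<\infty$. The genuine gap is the bridge you yourself flag, and neither of your suggestions closes it. You must prove $F(w)=\frac1{2\pi\mathrm{i}}\int_\Gamma F(\zeta)(\zeta-w)^{-1}\,\mathrm{d}\zeta$ \emph{before} knowing $F\in H^p(\Omega_+)$: the classical ``$E^1$ of a Smirnov domain is the Cauchy integral of its boundary values'' theorem is a statement about bounded rectifiable Jordan curves, while $\Gamma$ has infinite length, and the contour-closing argument this paper uses for exactly this purpose (Lemma~\ref{lem-170522-1043}) consumes the hypothesis $\sup_{\tau>0}\int_{\Gamma_\tau}|F|^p|\mathrm{d}w|<\infty$ --- i.e.\ the very conclusion you are trying to establish --- so it is unavailable at this stage. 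Your parenthetical alternative, substituting $t=\Psi(\zeta)$ into $f(z)=\frac1{2\pi\mathrm{i}}\int_{\mathbb{R}}f(t)(t-z)^{-1}\,\mathrm{d}t$, does not work: the Cauchy kernel is not conformally invariant, $\Psi'(\zeta)\,\mathrm{d}\zeta/(\Psi(\zeta)-\Psi(w))$ is not $\mathrm{d}\zeta/(\zeta-w)$, and the extra factor $(\Psi')^{1/p}$ carried by $F$ makes the mismatch worse. Closing this step requires a genuine limiting argument along the level curves $\phi(\{|\xi|=r\})$ as $r\to1$, using that $\Omega_+$ is a Smirnov-type domain ($\Phi'$ is outer because it takes values in the sector $\Sigma$) and controlling the contribution near $\xi=-1$, i.e.\ at infinity; as written, this is asserted rather than proved.

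For comparison: the present paper does not prove this lemma at all --- it imports it from \cite{DL17} --- but its surrounding machinery indicates a different and lighter route, built on the antisymmetrized kernel $K_{\mathrm{i}\tau}(\zeta,\zeta_0)$ of \eqref{equ-170623-1150} rather than the raw Cauchy kernel. By Lemma~\ref{lem-170622-2050} that kernel is dominated by $C\tau/(|\zeta-\zeta_0|^2+\tau^2)$, whose $L^1(\Gamma,|\mathrm{d}\zeta|)$ norm is bounded uniformly in each variable, so the associated operator is bounded on $L^p(\Gamma,|\mathrm{d}\zeta|)$ for all $1\leqslant p\leqslant\infty$ by an elementary Schur/Young estimate; this is what makes Corollary~\ref{cor-170803-1040} available even at $p=1$, whereas your route is intrinsically confined to $p>1$. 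If you do keep Calder\'on's theorem, be precise about the version you invoke: you need the uniform $L^p(\Gamma)$ bound for the family of operators $F\mapsto\int_\Gamma F(\zeta)(\zeta-\cdot-\mathrm{i}\tau)^{-1}\,\mathrm{d}\zeta$, which follows from the boundedness of the \emph{maximal} truncated Cauchy transform; citing only the principal-value statement leaves a further (small but real) step.
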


We could now prove that $T^{-1}$ is well-defined and bounded on 
$H^p(\mathbb{C}_+)$ for all $0<p<\infty$.
\begin{proposition}\label{pro-170801-1600}
  If $0< p< \infty$, then $H^p(\mathbb{C}_+)\subset T(H^p(\Omega_+))$ 
  or $T^{-1}(H^p(\mathbb{C}_+))\subset H^p(\Omega_+)$, and $T^{-1}$ 
    is bounded.
\end{proposition}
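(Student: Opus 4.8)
The plan is to bootstrap from the cases already in hand: Lemma~\ref{lem-170801-1520} supplies the assertion for $1<p<\infty$, so the genuinely new range is $0<p\le 1$, and I would in fact treat all $0<p<\infty$ uniformly, combining a Blaschke factorization with a fractional power that raises the exponent to $2$, where Lemma~\ref{lem-170801-1520} applies.

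First I would pass to a zero-free function. For $f\in H^p(\mathbb{C}_+)$ with $f\not\equiv 0$ and zero set $\{z_n\}$, let $B$ be the associated Blaschke product and $g:=f/B$; by Lemma~\ref{lem-170801-1550}, $g$ is analytic and zero-free on $\mathbb{C}_+$ with $\lVert g\rVert_{H^p(\mathbb{C}_+)}=\lVert f\rVert_{H^p(\mathbb{C}_+)}$. Since $g$ is zero-free on the simply connected domain $\mathbb{C}_+$, a holomorphic logarithm of $g$ exists, so $h:=\exp\!\bigl(\tfrac{p}{2}\log g\bigr)$ is analytic with $\lvert h\rvert^2=\lvert g\rvert^p$; hence $\int_{\mathbb{R}}\lvert h(x+\mathrm{i}y)\rvert^2\,\mathrm{d}x=\int_{\mathbb{R}}\lvert g(x+\mathrm{i}y)\rvert^p\,\mathrm{d}x$ for every $y>0$, so $h\in H^2(\mathbb{C}_+)$ and $\lVert h\rVert_{H^2(\mathbb{C}_+)}^2=\lVert g\rVert_{H^p(\mathbb{C}_+)}^p$.

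Next I would transport $h$ to $\Omega_+$. By Lemma~\ref{lem-170801-1520} with exponent $2$, the function $G(w):=T^{-1}h(w)=h(\Psi(w))\bigl(\Psi'(w)\bigr)^{1/2}$ lies in $H^2(\Omega_+)$ and $\lVert G\rVert_{H^2(\Omega_+)}\le C\lVert h\rVert_{H^2(\mathbb{C}_+)}$ for a constant $C$ independent of $h$. Moreover $G$ is zero-free on $\Omega_+$, since $h$ is zero-free and $\Psi'=1/\Phi'$ never vanishes, so $G^{2/p}:=\exp\!\bigl(\tfrac{2}{p}\log G\bigr)$ is a well-defined analytic function with $\lvert G^{2/p}\rvert^p=\lvert G\rvert^2$. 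On the other hand, since $\Psi(w)\in\mathbb{C}_+$ for $w\in\Omega_+$, Lemma~\ref{lem-170801-1530} gives $\lvert B(\Psi(w))\rvert<1$ on $\Omega_+$ (indeed, by Corollary~\ref{lem-170801-1540}, $\widetilde B(w):=B(\Psi(w))$ is the Blaschke product on $\Omega_+$ with zeros $\Phi(z_n)$).

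Finally I would assemble the estimate. Fixing the logarithmic branches consistently (the powers involved are determined up to unimodular constants, which affect none of the moduli below), the identities $f=Bg$, $g=h^{2/p}$ and $\bigl(\Psi'(w)\bigr)^{1/p}=\bigl((\Psi'(w))^{1/2}\bigr)^{2/p}$ yield $T^{-1}f(w)=\widetilde B(w)\,G(w)^{2/p}$. Since $\Gamma_\tau\subset\Omega_+$ and $\lvert\widetilde B\rvert<1$ there, for every $\tau>0$
\[
  \int_{\Gamma_\tau}\lvert T^{-1}f(w)\rvert^p\,\lvert\mathrm{d}w\rvert
  =\int_{\Gamma_\tau}\lvert\widetilde B(w)\rvert^p\,\lvert G(w)\rvert^2\,\lvert\mathrm{d}w\rvert
  \le\int_{\Gamma_\tau}\lvert G(w)\rvert^2\,\lvert\mathrm{d}w\rvert\le\lVert G\rVert_{H^2(\Omega_+)}^2 ,
\]
so, taking the supremum over $\tau>0$ and chaining the earlier bounds, $\lVert T^{-1}f\rVert_{H^p(\Omega_+)}^p\le C^2\lVert f\rVert_{H^p(\mathbb{C}_+)}^p$. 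Hence $T^{-1}f\in H^p(\Omega_+)$ and $T^{-1}$ is bounded, and since $T(T^{-1}f)=f$ we also obtain $f\in T(H^p(\Omega_+))$, establishing both inclusions. I expect the only delicate point to be the branch bookkeeping in the factorization $T^{-1}f=\widetilde B\cdot G^{2/p}$ together with the verification that $G$ is zero-free so that $G^{2/p}$ is legitimately analytic; once these are in place the estimate is forced by $\lvert\widetilde B\rvert<1$ on $\Omega_+$ and $\lvert G^{2/p}\rvert^p=\lvert G\rvert^2$.
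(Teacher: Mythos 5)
Your proof is correct and follows essentially the same route as the paper's: factor out the Blaschke product via Lemma~\ref{lem-170801-1550}, raise the zero-free part to the power $p/2$ to land in $H^2(\mathbb{C}_+)$, apply the $p=2$ case of Lemma~\ref{lem-170801-1520}, then undo the power and reinsert the transported Blaschke product using $|B(\Psi(w))|<1$, arriving at the same bound $\lVert T^{-1}f\rVert_{H^p(\Omega_+)}\leqslant C^{2/p}\lVert f\rVert_{H^p(\mathbb{C}_+)}$. Your version is slightly more explicit about the branch bookkeeping and treats all $0<p<\infty$ uniformly rather than reducing to $0<p\leqslant 1$, but these are presentational differences only.
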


\begin{proof}
  We only need to prove this proposition under the assumption $0< p\leqslant 1$.
  Let $f(z)\in H^p(\mathbb{C}_+)$, then we could write $f(z)= B(z)g(z)$ 
  according to Lemma~\ref{lem-170801-1550}, where $g(z)$ has no zeros and 
  $\lVert g\rVert_{H^p(\mathbb{C}_+)}= \lVert f\rVert_{H^p(\mathbb{C}_+)}$. 
  Thus $g^\frac{p}2\in H^2(\mathbb{C}_+)$, and, by Lemma~\ref{lem-170801-1520}, 
  \[T_2^{-1}(g^{\frac{p}2})(w)
    = g^{\frac{p}2}(\Psi(w))(\Psi'(w))^{\frac12}
    \in H^2(\Omega_+),\]
  where $T_2^{-1}$ is the ``$p=2$'' version of $T^{-1}$. It follows that 
  \[g(\Psi(w))(\Psi'(w))^{\frac1p}\in H^p(\Omega_+).\] 
  Define $F(w)= f(\Psi(w))(\Psi'(w))^{\frac1p}$, then $TF(z)= f(z)$, and 
  \[F(w)= B(\Psi(w))g(\Psi(w))(\Psi'(w))^{\frac1p}.\]
  Since $|B(\Psi(w))|<1$ if $w\in\Omega_+$, we have $F(w)\in H^p(\Omega_+)$, 
  which shows that $H^p(\mathbb{C}_+)\subset T(H^p(\Omega_+))$. 
  Now $T^{-1}$ is well-defined on $H^p(\mathbb{C}_+)$, and 
  $T^{-1}f= F= f(\Psi)(\Psi')^{\frac1p}$.
  
  Lemma~\ref{lem-170801-1520} also implies that there exists a constant $C>0$, 
  such that, 
  \[\lVert T_2^{-1}(g^{\frac{p}2})\rVert_{H^2(\Omega_+)}
    = \lVert g^{\frac{p}2}(\Psi)(\Psi')^{\frac12}\rVert_{H^2(\Omega_+)}
    \leqslant C\lVert g^{\frac{p}2}\rVert_{H^2(\mathbb{C}_+)},\]
  then
  \begin{align*}
    \lVert F^p\rVert_{H^1(\Omega_+)}
    &\leqslant \lVert g^p(\Psi)\Psi'\rVert_{H^1(\Omega_+)}           \\
    &\leqslant C^2\lVert g^p\rVert_{H^1(\mathbb{C}_+)}
      = C^2\lVert f^p\rVert_{H^1(\mathbb{C}_+)},
  \end{align*}
  thus, 
  \[\lVert T^{-1}f\rVert_{H^p(\Omega_+)}
    = \lVert F\rVert_{H^p(\Omega_+)}
    \leqslant C^{\frac2p}\lVert f\rVert_{H^p(\mathbb{C}_+)},\]
  and we have proved that $T^{-1}$ is bounded.
\end{proof}

\begin{theorem}\label{thm-170801-1640}
  Let $F(w)\in H^p(\Omega_+)$, $F\not\equiv 0$, $\{w_n\}$ be the zeros 
  of $F(w)$, and $B(w)$ be the Blaschke product associated with $\{w_n\}$. 
  Then
  \[G(w)=\frac{F(w)}{B(w)}\in H^p(\Omega_+),\quad
    \text{and } \lVert F\rVert_{H^p(\Omega_+)}
    \leqslant \lVert G\rVert_{H^p(\Omega_+)}.\]  
\end{theorem}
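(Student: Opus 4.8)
The plan is to transport the problem to the upper half-plane $\mathbb{C}_+$ via the isomorphism $T$, where the analogous statement (Lemma~\ref{lem-170801-1550}) is already available, and then translate the conclusion back. The key observation is that the Blaschke product $B(w)$ on $\Omega_+$ from Corollary~\ref{lem-170801-1540} is built from the $\mathbb{C}_+$-Blaschke product evaluated at $\Psi(w)$, so it interacts cleanly with $T$.

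First I would set $f(z) = TF(z) = F(\Phi(z))(\Phi'(z))^{1/p}$, which lies in $H^p(\mathbb{C}_+)$ by Proposition~\ref{pro-170801-1410}. The zeros of $f$ in $\mathbb{C}_+$ are exactly the points $\Psi(w_n)$ (since $\Phi'$ is nonzero a.e.\ and, more to the point, nonvanishing on the open half-plane, $(\Phi')^{1/p}$ contributes no zeros in $\mathbb{C}_+$), with matching multiplicities because $\Phi$ is conformal. Hence the $\mathbb{C}_+$-Blaschke product associated with the zeros of $f$ is precisely the function $\widetilde B(z)$ such that $B(w) = \widetilde B(\Psi(w))$, i.e.\ $B(\Phi(z)) = \widetilde B(z)$. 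One must check that the Blaschke summation condition $\sum \mathrm{Im}\,\Psi(w_n)/(1+|\Psi(w_n)|^2) < \infty$ holds; this follows from $f \in H^p(\mathbb{C}_+)$ being nonzero, which forces its zero set to be a Blaschke sequence — this is the content that makes $B(w)$ well-defined in the first place, so it can be invoked from Corollary~\ref{lem-170801-1540} or from standard $H^p(\mathbb{C}_+)$ theory.

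Next, apply Lemma~\ref{lem-170801-1550} to $f$: the function $g(z) = f(z)/\widetilde B(z)$ is zero-free, lies in $H^p(\mathbb{C}_+)$, and satisfies $\lVert g\rVert_{H^p(\mathbb{C}_+)} = \lVert f\rVert_{H^p(\mathbb{C}_+)}$. Now I would define $G(w) = F(w)/B(w)$ and verify $G = T^{-1}g$; indeed
\[
T^{-1}g(w) = g(\Psi(w))(\Psi'(w))^{1/p}
 = \frac{f(\Psi(w))}{\widetilde B(\Psi(w))}(\Psi'(w))^{1/p}
 = \frac{F(\Phi(\Psi(w)))(\Phi'(\Psi(w)))^{1/p}(\Psi'(w))^{1/p}}{B(w)}
 = \frac{F(w)}{B(w)},
\]
using $\Phi'(\Psi(w))\Psi'(w) = 1$. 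By Proposition~\ref{pro-170801-1600}, $T^{-1}g \in H^p(\Omega_+)$, so $G \in H^p(\Omega_+)$, and $G$ is zero-free since $g$ is and $\Psi$ is a bijection.

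It remains to prove the norm inequality $\lVert F\rVert_{H^p(\Omega_+)} \leqslant \lVert G\rVert_{H^p(\Omega_+)}$. The clean way is to note $F(w) = B(w)G(w)$ with $|B(w)| < 1$ on $\Omega_+$; hence for every $\tau > 0$,
\[
\int_{\Gamma_\tau} |F(w)|^p |\mathrm{d}w|
 = \int_{\Gamma_\tau} |B(w)|^p |G(w)|^p |\mathrm{d}w|
 \leqslant \int_{\Gamma_\tau} |G(w)|^p |\mathrm{d}w|
 \leqslant \lVert G\rVert_{H^p(\Omega_+)}^p,
\]
and taking the supremum over $\tau > 0$ gives the claim. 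I expect the main obstacle to be the bookkeeping around the zeros: carefully justifying that $(\Phi')^{1/p}$ introduces no spurious zeros or poles in the open half-plane (so the zero sets of $F$ and $f$ correspond exactly under $\Phi$, with multiplicity), that a single-valued branch of $(\Phi')^{1/p}$ can be chosen since $\mathbb{C}_+$ is simply connected and $\mathrm{Re}\,\Phi' > 0$, and that the Blaschke product on $\Omega_+$ defined in Corollary~\ref{lem-170801-1540} genuinely coincides with $\widetilde B\circ\Psi$ for the sequence $\{w_n\}$. Once that correspondence is nailed down, everything else is a direct transport along $T$ and $T^{-1}$ together with the trivial estimate $|B| < 1$.
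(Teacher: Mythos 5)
Your proposal is correct and follows essentially the same route as the paper: transport $F$ to $f=TF\in H^p(\mathbb{C}_+)$ via Proposition~\ref{pro-170801-1410}, factor out the half-plane Blaschke product by Lemma~\ref{lem-170801-1550}, pull back with $T^{-1}$ using Proposition~\ref{pro-170801-1600}, and conclude the norm inequality from $|B|<1$. Your extra remarks on the Blaschke summability condition and the single-valued branch of $(\Phi')^{1/p}$ are sound housekeeping that the paper leaves implicit, but they do not change the argument.
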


\begin{proof}
  By Proposition~\ref{pro-170801-1410}, $TF(z)= F(\Phi(z)) 
    (\Phi'(z))^{\frac1p}\in H^p(\mathbb{C}_+)$, then $\{\Psi(w_n)\}$ is 
  exactly the zeros of $TF(z)$ since $\Phi'(z)\neq 0$. We know, 
  by Lemma~\ref{lem-170801-1550}, 
  \[TF(z)= B(z)g(z),\quad \text{and }
    \lVert g\rVert_{H^p(\mathbb{C}_+)}= \lVert TF\rVert_{H^p(\mathbb{C}_+)},\] 
  where $B(z)$ is the Blaschke product associated with $\{\Psi(w_n)\}$.
  
  Let $z=\Psi(w)$ for $w\in\Omega_+$, then
  $F(w)(\Psi'(w))^{-\frac1p}= B(\Psi(w))g(\Psi(w))$, or
  \[F(w)= B(\Psi(w))g(\Psi(w))(\Psi'(w))^{\frac1p}
    = B(\Psi(w))T^{-1}g(w),\]
  and $T^{-1}g(w)\in H^p(\Omega_+)$, by Proposition~\ref{pro-170801-1600}.
  We also know, by definition, that $B(\Psi(w))$ is the Blaschke product 
  associated with $\{w_n\}$.
  
  Since $|B(\Psi(w))|<1$ for $w\in\Omega_+$, it is obvious that 
  $\lVert F\rVert_{H^p(\Omega_+)}\leqslant \lVert G\rVert_{H^p(\Omega_+)}$, 
  and the theorem is proved.
\end{proof}

The proving method of the following theorem comes from \cite{De10}.

\begin{theorem}\label{thm-170801-1650}
  If $0<p<\infty$, $\tau>0$, and $F(w)\in H^p(\Omega_+)$, then $F(w)$ has 
  non-tangential boundary limit $F(\zeta)$, 
  which is in $L^p(\Gamma, |\mathrm{d}\zeta|)$, 
  a.e.\@ on $\Gamma$, and 
  \[\lim_{\tau\to 0} \lVert F(\cdot+\mathrm{i}\tau)- 
        F\rVert_{L^p(\Gamma,|\mathrm{d}\zeta|)}
    = 0.\]
\end{theorem}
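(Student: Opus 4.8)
The plan is to transfer the problem to the upper half-plane via the isomorphism $T$ established in Proposition~\ref{pro-170801-1410} and Proposition~\ref{pro-170801-1600}, where the corresponding boundary-value facts for $H^p(\mathbb{C}_+)$ are classical. Concretely, given $F\in H^p(\Omega_+)$, set $f=TF\in H^p(\mathbb{C}_+)$, so $f(z)=F(\Phi(z))(\Phi'(z))^{1/p}$ and $F(w)=f(\Psi(w))(\Psi'(w))^{1/p}$. Since $f\in H^p(\mathbb{C}_+)$ it has a non-tangential boundary limit $f(x)$ a.e.\ on $\mathbb{R}$ with $f(x)\in L^p(\mathbb{R})$ and $\lVert f(\cdot+\mathrm{i}y)-f\rVert_{L^p(\mathbb{R})}\to 0$ as $y\to 0$, all of which is stated in the excerpt. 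The first step is therefore to pull these back: by Lemma~\ref{lem-170706-2130}, $\Phi$ extends to a homeomorphism of $\overline{\mathbb{C}_+}$ onto $\overline{\Omega_+}$, $\Phi'$ has a non-tangential limit a.e.\ which is nonzero a.e., $\Phi$ preserves angles at a.e.\ boundary point, and null sets correspond to null sets. Using angle preservation, a non-tangential approach $w\to\zeta_0$ in $\Omega_+$ corresponds to a non-tangential approach $z=\Psi(w)\to\Psi(\zeta_0)=x_0$ in $\mathbb{C}_+$ at a.e.\ point $x_0$; combined with the non-tangential limit of $\Psi'$ (equivalently $1/\Phi'$) being finite and nonzero a.e., we get that $F(w)=f(\Psi(w))(\Psi'(w))^{1/p}$ has a non-tangential boundary limit a.e.\ on $\Gamma$, which we call $F(\zeta)$, and on the boundary $F(\zeta)=f(x)(\Psi'(\zeta))^{1/p}$ where $x$ is the preimage of $\zeta$.

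The second step is to identify the norms. The change of variables $\zeta=\Phi(x)$ on $\Gamma$ gives $|\mathrm{d}\zeta|=|\Phi'(x)|\,\mathrm{d}x$ a.e.\ (by Lemma~\ref{lem-170706-2130}(iv), $\Phi$ is locally absolutely continuous on $\mathbb{R}$), so
\[
\int_\Gamma |F(\zeta)|^p\,|\mathrm{d}\zeta|
= \int_{\mathbb{R}} |f(x)|^p\,|\Psi'(\Phi(x))|\,|\Phi'(x)|\,\mathrm{d}x
= \int_{\mathbb{R}} |f(x)|^p\,\mathrm{d}x = \lVert f\rVert_{H^p(\mathbb{C}_+)}^p,
\]
using $\Phi'(x)\Psi'(\Phi(x))=1$ a.e. In particular $F(\zeta)\in L^p(\Gamma,|\mathrm{d}\zeta|)$. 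Incidentally this identity $\lVert F\rVert\text{-type equality}$ also yields, together with Propositions~\ref{pro-170801-1410} and~\ref{pro-170801-1600}, the boundedness of $T$ promised after the statement, but for the present theorem we only need finiteness.

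The third step, the $L^p$-convergence $\lVert F(\cdot+\mathrm{i}\tau)-F\rVert_{L^p(\Gamma,|\mathrm{d}\zeta|)}\to 0$, is where the main work lies, because the horizontal translation $w\mapsto w+\mathrm{i}\tau$ in $\Omega_+$ does \emph{not} correspond to a clean translation in $\mathbb{C}_+$ under $\Psi$. My plan is to avoid pulling $\Gamma_\tau$ back through $\Psi$ and instead argue directly, imitating the classical proof (the excerpt says the method is from \cite{De10}). First, the family $\{|F(\cdot+\mathrm{i}\tau)|^p\}_{0<\tau<1}$ is uniformly integrable on $\Gamma$: this follows because $TF\in H^p(\mathbb{C}_+)$ controls $F$ on every $\Gamma_\tau$ uniformly (via $\lVert F(\cdot+\mathrm{i}\tau)\rVert_{L^p(\Gamma)}\le \lVert F\rVert_{H^p(\Omega_+)}$) and, more importantly, because for $1\le p<\infty$ one has the Poisson-integral representation of $f$ over $\mathbb{R}$ which transfers to a subharmonicity/majorant bound on $|F|^p$ (via $F\in E^p(\Omega_+)$ from Proposition~\ref{pro-170801-1410} and Lemma~\ref{lem-170801-1430}), giving equi-continuity of the integrals; for $0<p<1$ one factors $F=B\cdot G$ using Theorem~\ref{thm-170801-1640} and writes $G^{p/2}\cdot(\text{something})$ to reduce to an $L^2$ statement where Hilbert-space/Poisson arguments apply, or alternatively applies the $0<p<1$ maximal-function estimate after pulling back to $\mathbb{C}_+$. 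Second, pointwise a.e.\ on $\Gamma$ we have $F(\zeta+\mathrm{i}\tau)\to F(\zeta)$ as $\tau\to 0$, because vertical approach is non-tangential. Third, combine pointwise convergence with uniform integrability (Vitali's convergence theorem) to conclude $L^p$ convergence. The delicate point to get right is the uniform integrability / tightness at infinity along $\Gamma$ uniformly in $\tau$; I expect to handle the tail using the decay already exploited in the proof of Proposition~\ref{pro-170801-1410} (that $\int_{1/n}^n |F(\zeta(u)+\mathrm{i}\tau)|^p\,\mathrm{d}\tau\to 0$ as $|u|\to\infty$) upgraded to a statement uniform in $\tau\in(0,1)$ by a normal-families argument, and the local uniform integrability from the harmonic-majorant bound. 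The hard part will be making the $0<p<1$ case of this uniform-integrability step rigorous without circularity, since the Poisson representation is unavailable there; the factorization $F=B\,T^{-1}g$ with $g$ zero-free, so that $g^{p/2}$ is an honest $H^2$ function whose boundary convergence is classical, is the device I would rely on.
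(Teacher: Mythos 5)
Your first two steps (existence of the non-tangential limit and its membership in $L^p(\Gamma,|\mathrm{d}\zeta|)$ via the conformal transfer $F=f(\Psi)(\Psi')^{1/p}$ and Kenig's lemma) are sound in spirit and close to how the paper handles $p>1$; for $0<p\leqslant 1$ the paper instead factors $F=BG$ with $G$ zero-free (Theorem~\ref{thm-170801-1640}), observes $G^{p/2}\in H^2(\Omega_+)$, and inherits the boundary limit from the $H^2$ case --- a route you also gesture at. The problem is your third step. You correctly identify that the translation $w\mapsto w+\mathrm{i}\tau$ does not transfer through $\Psi$, but the replacement you propose --- pointwise convergence plus uniform integrability plus Vitali --- is not carried out, and you say so yourself (``the hard part will be making the $0<p<1$ case of this uniform-integrability step rigorous''). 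Uniform integrability of $\{|F_\tau|^p\}_{0<\tau<1}$ on $\Gamma$, including tightness at infinity uniformly in $\tau$, is exactly the content you would need to prove and is not supplied by the harmonic-majorant bound or by the decay estimate in Proposition~\ref{pro-170801-1410} (which is for fixed $n$, not uniform in $\tau$). Note also that the usual shortcut (a.e.\ convergence plus convergence of norms implies $L^p$ convergence) is unavailable here because, unlike $m(f,y)$ on the half-plane, $\int_{\Gamma_\tau}|F|^p|\mathrm{d}w|$ is not known to be monotone in $\tau$, so $\limsup_{\tau\to0}\lVert F_\tau\rVert_{L^p(\Gamma)}\leqslant\lVert F\rVert_{L^p(\Gamma)}$ is not free.

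The missing device, which is the heart of the paper's proof, is purely algebraic: choose $n$ with $np>1$, set $H=G^{1/n}\in H^{np}(\Omega_+)$, so that $\lVert H_\tau-H\rVert_{L^{np}(\Gamma)}\to0$ is already known from the $p>1$ theory, and then use
\[
G_\tau-G=(H_\tau-H)\sum_{k=0}^{n-1}H_\tau^{\,n-1-k}H^{k}
\]
together with H\"older's inequality with exponents $n$ and $\frac{n}{n-1}$ to get
$\lVert G_\tau-G\rVert_{L^p(\Gamma)}^p\leqslant \lVert H_\tau-H\rVert_{L^{np}(\Gamma)}^p\cdot C\lVert H\rVert_{H^{np}(\Omega_+)}^{(n-1)p}$,
the sum being controlled uniformly in $\tau$ by $\lVert H\rVert_{H^{np}(\Omega_+)}$. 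Finally $\lVert F_\tau-F\rVert_{L^p}^p\leqslant\lVert B_\tau(G_\tau-G)\rVert_{L^p}^p+\lVert(B_\tau-B)G\rVert_{L^p}^p$, where the first term is $\leqslant\lVert G_\tau-G\rVert_{L^p}^p$ since $|B_\tau|<1$ and the second goes to $0$ by dominated convergence. Your $g^{p/2}\in H^2$ remark is the right instinct, but without this telescoping-plus-H\"older step you have no bridge from $L^{2}$ (or $L^{np}$) convergence of the fractional power back to $L^p$ convergence of $G$ itself; as written, the norm-convergence half of the theorem is not proved.
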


\begin{proof}
  If $1<p<\infty$, then the existence of non-tangential boundary limit 
  was proved in \cite{DL17}, and the $L^p(\Gamma, |\mathrm{d}\zeta|)$ norm 
  convergence could be proved in the same way as in \cite{MC97}.
  
  We now assume $0< p\leqslant 1$, and write $F(w)=B(w)G(w)$ 
  by Theorem~\ref{thm-170801-1640}, 
  where $G(w)\in H^p(\Omega_+)$ has no zeros. 
  Then $G^{\frac{p}2}(w)\in H^2(\Omega_+)$ has non-tangential boundary limit 
  $G^{\frac{p}2}(\zeta)$ a.e.\@ on $\Gamma$, 
  and $G^{\frac{p}2}(\zeta)\in L^2(\Gamma,|\mathrm{d}\zeta|)$. 
  Togother with Corollary~\ref{lem-170801-1540}, we know that $F(w)$ has 
  non-tangential boundary limit $F(\zeta)= B(\zeta)G(\zeta)$ 
  a.e.\@ on $\Gamma$, and $F(\zeta)\in L^p(\Gamma,|\mathrm{d}\zeta|)$, 
  since $|F(\zeta)|= |G(\zeta)|$ a.e.\@ 
  and $G(\zeta)\in L^p(\Gamma,|\mathrm{d}\zeta|)$.
  
  The proof of the $L^p(\Gamma,|\mathrm{d}\zeta|)$ norm convergence 
  is more involved. Let $n$ be a positive integer such that $np>1$, 
  and denote $G^{\frac1n}(w)$ as $H(w)$, then $H(w)\in H^{np}(\Omega_+)$ 
  since $G(w)\in H^p(\Omega_+)$, and $H(w)$ has non-tangential boundary limit 
  $H(\zeta)= G^{\frac1p}(\zeta)$ a.e.\@ on $\Gamma$. If we let 
  $f_\tau(\zeta)= f(\zeta+\mathrm{i}\tau)$ for $f(w)$ defined on $\Omega_+$ 
  and $L^p(\Gamma)= L^p(\Gamma,|\mathrm{d}\zeta|)$ for simplicity, then
  \begin{equation}\label{equ-170801-1800}
    \lim_{\tau\to 0} \lVert H_\tau-H\rVert_{L^{np}(\Gamma)}= 0,\quad
    \text{and } \lVert H\rVert_{L^{np}(\Gamma)} 
        \leqslant \lVert H\rVert_{H^{np}(\Omega_+)}.
  \end{equation}
  The later comes from Fatou's lemma. Since $0< p\leqslant 1$, we have
  \begin{align*}
    \lVert F_\tau-F\rVert_{L^p(\Gamma)}^p
    &= \lVert B_\tau(G_\tau-G)+ (B_\tau-B)G\rVert_{L^p(\Gamma)}^p         \\
    &\leqslant \lVert B_\tau(G_\tau-G)\rVert_{L^p(\Gamma)}^p
         + \lVert (B_\tau-B)G\rVert_{L^p(\Gamma)}^p,       
  \end{align*}
  and then, as $|B_\tau|< 1$ and $G(\zeta)\in L^p(\Gamma)$,
  \begin{align*}
    \lim_{\tau\to 0} \lVert F_\tau-F\rVert_{L^p(\Gamma)}^p
    &\leqslant \lim_{\tau\to 0}\lVert B_\tau(G_\tau-G)\rVert_{L^p(\Gamma)}^p  \\
    &\leqslant \lim_{\tau\to 0} \lVert G_\tau-G\rVert_{L^p(\Gamma)}^p.
  \end{align*}
  The first inequality is by Lebesgue's dominated convergence theorem.
  
  Notice that, since $G(w)= H^n(w)$ and 
  $G(\zeta)= H^n(\zeta)$ a.e.\@ on $\Gamma$, 
  \[G_\tau- G= (H_\tau- H)\sum_{k=0}^{n-1} H_\tau^{n-1-k} H^k,\]
  we then apply H\"{o}lder's inequality, as $\frac1n+\frac{n-1}n=1$, 
  to obtain
  \begin{equation}\label{equ-170801-1810}
    \begin{aligned}
      \lVert G_\tau-G\rVert_{L^p(\Gamma)}^p
      &= \int_\Gamma |G_\tau- G|^p |\mathrm{d}\zeta|       \\
      &\leqslant \bigg(\int_\Gamma |H_\tau- H|^{np} 
              |\mathrm{d}\zeta|\bigg)^{\frac1n}
          \bigg(\int_\Gamma\Big|\sum_{k=0}^{n-1} H_\tau^{n-1-k} H^k
             \Big|^{\frac{np}{n-1}} |\mathrm{d}\zeta|\bigg)^{\frac{n-1}{n}}  \\
      &= \lVert H_\tau-H\rVert_{L^{np}(\Gamma)}^p
         \bigg\lVert \sum_{k=0}^{n-1} H_\tau^{n-1-k} H^k
             \bigg\rVert_{L^{\frac{np}{n-1}}(\Gamma)}^p.
    \end{aligned}
  \end{equation}
  Since $\frac{k}{n-1}+\frac{n-1-k}{n-1}=1$ for $0\leqslant k\leqslant n-1$, 
  we have
  \begin{align*}
    \int_\Gamma |H_\tau^{n-1-k} H^k|^{\frac{np}{n-1}} |\mathrm{d}\zeta| 
    &= \int_\Gamma |H_\tau|^{\frac{n-1-k}{n-1}np} 
           |H|^{\frac{k}{n-1}np} |\mathrm{d}\zeta|                   \\
    &\leqslant \bigg(\int_\Gamma |H_\tau|^{np} |\mathrm{d}\zeta|
            \bigg)^{\frac{n-1-k}{n-1}}
          \bigg(\int_\Gamma |H|^{np} |\mathrm{d}\zeta|
            \bigg)^{\frac{k}{n-1}}                                \\
    &\leqslant \big(\lVert H_\tau\rVert_{L^{np}(\Gamma)}^{np}
            \big)^{\frac{n-1-k}{n-1}}
          \big(\lVert H\rVert_{H^{np}(\Omega_+)}^{np}\big)^{\frac{k}{n-1}}   \\
    &\leqslant \lVert H\rVert_{H^{np}(\Omega_+)}^{np},
  \end{align*}
  which is 
  \[\lVert H_\tau^{n-1-k} H^k\rVert_{L^{\frac{np}{n-1}}(\Gamma)}
    \leqslant \lVert H\rVert_{H^{np}(\Omega_+)}^{n-1}.\]
  
  If $\frac{np}{n-1}>1$, then
  \begin{align*}
    \bigg\lVert \sum_{k=0}^{n-1} H_\tau^{n-1-k} H^k
       \bigg\rVert_{L^{\frac{np}{n-1}}(\Gamma)}
     &\leqslant \sum_{k=0}^{n-1} \lVert H_\tau^{n-1-k} H^k
       \rVert_{L^{\frac{np}{n-1}}(\Gamma)}                         \\
     &\leqslant n\lVert H\rVert_{H^{np}(\Omega_+)}^{n-1},
  \end{align*}
  or 
  \[\bigg\lVert \sum_{k=0}^{n-1} H_\tau^{n-1-k} H^k
       \bigg\rVert_{L^{\frac{np}{n-1}}(\Gamma)}^p
     \leqslant n^p \lVert H\rVert_{H^{np}(\Omega_+)}^{(n-1)p}.\]
  If $0< \frac{np}{n-1}\leqslant 1$, then 
  \begin{align*}
    \int_\Gamma\Big|\sum_{k=0}^{n-1} H_\tau^{n-1-k} H^k
       \Big|^{\frac{np}{n-1}} |\mathrm{d}\zeta|
    &\leqslant \sum_{k=0}^{n-1} \int_\Gamma |H_\tau^{n-1-k} H^k
           |^{\frac{np}{n-1}} |\mathrm{d}\zeta|                    \\
    &\leqslant n\lVert H\rVert_{H^{np}(\Omega_+)}^{np},
  \end{align*}
  or
  \[\bigg\lVert \sum_{k=0}^{n-1} H_\tau^{n-1-k} H^k
       \bigg\rVert_{L^{\frac{np}{n-1}}(\Gamma)}^p
     \leqslant n^{\frac{n-1}n} \lVert H\rVert_{H^{np}(\Omega_+)}^{(n-1)p}.\]
  Let $M_1=\max\{n^p,n^{\frac{n-1}n}\}$, then 
  inequality~\eqref{equ-170801-1810} becomes 
  \[\lVert G_\tau-G\rVert_{L^p(\Gamma)}^p
    \leqslant \lVert H_\tau-H\rVert_{L^{np}(\Gamma)}^p\cdot 
           M_1 \lVert H\rVert_{H^{np}(\Omega_+)}^{(n-1)p},\]
  which implies that, by \eqref{equ-170801-1800}, 
  \[\lim_{\tau\to 0} \lVert F_\tau-F\rVert_{L^p(\Gamma)}^p
    \leqslant \lim_{\tau\to 0} \lVert G_\tau-G\rVert_{L^p(\Gamma)}^p
    = 0,\]
  and we have proved the theorem.
\end{proof}

Now we are in the position of finishing the proof of the isomorphic theorem, 
that is, Theorem~\ref{thm-170803-1520}.

\begin{proof}[proof of Theorem~\ref{thm-170803-1520}]
  We only need to verify that $T$ is bounded in view of 
  Proposition~\ref{pro-170801-1410} and Proposition~\ref{pro-170801-1600}. 
  If $F(w)\in H^p(\Omega_+)$, then, by Theorem~\ref{thm-170801-1650}, 
  $F(w)$ has non-tangential boundary limit $F(\zeta)$ a.e.\@ on $\Gamma$. 
  By Fatou's lemma, we have 
  \begin{align*}
    \lVert TF\rVert_{H^p(\mathbb{C}_+)}^p
    &= \int_{\mathbb{R}} |F(\Phi)|^p |\Phi'|\,\mathrm{d}x   
     = \int_\Gamma |F|^p|\mathrm{d}\zeta|                             \\
    &= \int_\Gamma \liminf_{\tau\to 0} |F_\tau|^p|\mathrm{d}\zeta|    
     \leqslant \liminf_{\tau\to 0} \int_\Gamma |F_\tau|^p|\mathrm{d}\zeta|  \\
    &\leqslant \lVert F\rVert_{H^p(\Omega_+)}^p,
  \end{align*}
  then $\lVert TF\rVert_{H^p(\mathbb{C}_+)}
    \leqslant \lVert F\rVert_{H^p(\Omega_+)}$ 
  and $\lVert T\rVert\leqslant 1$.
\end{proof}

\section{More Results of $H^p(\Omega_+)$}

In this section, we will deduce, mainly from Theorem~\ref{thm-170803-1520} 
and Theorem~\ref{thm-170801-1650}, 
the Cauchy representation of $H^p(\Omega_+)$ while $1\leqslant p< \infty$, 
the completeness and separability of $H^p(\Omega_+)$ while $0<p<\infty$, 
and some other results which worth noticing. The following lemma 
was only mentioned in \cite{MC97}, and we write down the proof for clarity.
\begin{lemma}[\cite{MC97}]\label{lem-160628-0820}
  Let $0<p<\infty$, $F(w)$ be analytic on $\Omega_+$, and $\tau_1$, 
  $\tau_2$, $M_1$ are positive constants with $\tau_1<\tau_2$.
  If 
  \[\sup_{\tau\in[\tau_1,\tau_2]} \int_{\Gamma} |F(\zeta+\mathrm{i}\tau)|^p 
      |\mathrm{d}\zeta|\leqslant M_1\]
  then $F(\zeta(u)+\mathrm{i}\tau)\to 0$ uniformly for 
  $\tau_1+\delta\leqslant \tau\leqslant \tau_2-\delta$ as $|u|\to\infty$, 
  where $0<\delta<\frac{\tau_2-\tau_1}{2}$ is fixed.
\end{lemma}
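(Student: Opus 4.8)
The plan is to establish the uniform decay at infinity by a subharmonicity-plus-mean-value argument: the value of $|F|^p$ at an interior point $w$ of the strip $\{\zeta(u)+\mathrm{i}\tau:\tau\in[\tau_1,\tau_2]\}$ is controlled by the integral of $|F|^p$ over a small disk (or a small vertical segment, after integrating in $\tau$) centered at $w$, and that local mass tends to $0$ as $\mathrm{Re}\,w\to\pm\infty$ because the total mass over each horizontal line $\Gamma_\tau$ is uniformly bounded by $M_1$. First I would fix $\delta\in(0,\frac{\tau_2-\tau_1}{2})$ and let $r=r(\delta,M)>0$ be small enough that for every $u\in\mathbb{R}$ and every $\tau\in[\tau_1+\delta,\tau_2-\delta]$ the closed disk $\overline{D(\zeta(u)+\mathrm{i}\tau,r)}$ is contained in the open strip $\{\zeta(v)+\mathrm{i}\sigma:\sigma\in(\tau_1,\tau_2)\}$; such an $r$ exists and can be chosen uniformly in $u$ because the strip has a Lipschitz (hence uniformly "thick") geometry, so it suffices to take, say, $r<\delta/\sqrt{1+M^2}$.

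The heart of the argument is then the sub-mean-value inequality for the subharmonic function $|F|^p$: for $w_0=\zeta(u)+\mathrm{i}\tau$ in the inner strip,
\[
  |F(w_0)|^p
  \leqslant \frac{1}{\pi r^2}\iint_{D(w_0,r)} |F(w)|^p\,\mathrm{d}\lambda(w)
  \leqslant \frac{1}{\pi r^2}\iint_{R_u} |F(w)|^p\,\mathrm{d}\lambda(w),
\]
where $R_u=\{\zeta(v)+\mathrm{i}\sigma:|v-u|<r,\ \sigma\in(\tau_1,\tau_2)\}$ is the portion of the strip lying over the base interval $(u-r,u+r)$. Writing the area integral in the coordinates $(v,\sigma)$, Tonelli's theorem gives
\[
  \iint_{R_u} |F(w)|^p\,\mathrm{d}\lambda(w)
  = \int_{\tau_1}^{\tau_2}\!\!\int_{u-r}^{u+r}
      |F(\zeta(v)+\mathrm{i}\sigma)|^p\,\mathrm{d}v\,\mathrm{d}\sigma
  \leqslant \int_{\tau_1}^{\tau_2}\!\!\int_{|v-u|<r}
      |F(\zeta(v)+\mathrm{i}\sigma)|^p\,|\mathrm{d}\zeta(v)|\,\mathrm{d}\sigma,
\]
since $\mathrm{d}v\leqslant|\mathrm{d}\zeta(v)|$. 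The inner integral over $|v-u|<r$ is the tail of the (uniformly) convergent integral $\int_\Gamma |F(\zeta+\mathrm{i}\sigma)|^p\,|\mathrm{d}\zeta|\leqslant M_1$, so I would finish as follows: by the bound $\int_{\tau_1}^{\tau_2}\!\int_\Gamma |F(\zeta+\mathrm{i}\sigma)|^p|\mathrm{d}\zeta|\,\mathrm{d}\sigma\leqslant (\tau_2-\tau_1)M_1<\infty$, the set function $A\mapsto \int_{\tau_1}^{\tau_2}\!\int_{\{\zeta(v):\,v\in A\}}|F(\zeta(v)+\mathrm{i}\sigma)|^p|\mathrm{d}\zeta(v)|\,\mathrm{d}\sigma$ is a finite measure on $\mathbb{R}$, hence its mass over $(u-r,u+r)$ tends to $0$ as $|u|\to\infty$. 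Combining with the displayed sub-mean-value inequality, $\sup_{\tau\in[\tau_1+\delta,\tau_2-\delta]}|F(\zeta(u)+\mathrm{i}\tau)|^p\to 0$ as $|u|\to\infty$, which is the claim.

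The main obstacle I anticipate is purely a matter of bookkeeping rather than depth: one must be careful that the disk radius $r$ can be chosen \emph{uniformly in $u$} so that the disk stays inside the strip $\tau\in(\tau_1,\tau_2)$ — this is where the Lipschitz bound $M$ enters, since a vertical tube of half-width $r$ over $(u-r,u+r)$ only fits inside the (vertically sliced) strip when $r$ is small compared to $\delta$ in a way that accounts for the slope. An alternative that sidesteps disks entirely is to apply, for each fixed $\sigma$, the one-dimensional fact that an $H^p$-type function on a strip decays at the ends — but the area-integral version above is cleanest and self-contained, so that is the route I would take.
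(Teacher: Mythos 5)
Your proposal is correct and follows essentially the same route as the paper's own proof: the same uniform radius $\delta/\sqrt{1+M^2}$ keeping the disk inside the wider strip, the same sub-mean-value inequality for the subharmonic function $|F|^p$, and the same conclusion that the local mass over $(u-r,u+r)$ vanishes as $|u|\to\infty$ because the total mass $\int_{\tau_1}^{\tau_2}\!\int_\Gamma |F(\zeta+\mathrm{i}\sigma)|^p\,|\mathrm{d}\zeta|\,\mathrm{d}\sigma$ is finite. No substantive differences to report.
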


\begin{proof}
  Since $|a'(u)|\leqslant M$, by the definition of $\Gamma$, if we let 
  $\rho=\frac{\delta}{\sqrt{1+M^2}}$, 
  $E_1=\{\zeta+\mathrm{i}\tau\colon \zeta\in\Gamma,
    \ \tau\in [\tau_1,\tau_2]\}$, 
  $E_2=\{\zeta+\mathrm{i}\tau\colon \zeta\in\Gamma,
    \ \tau\in [\tau_1+\delta,\tau_2-\delta]$, 
  then $\{w\colon |w-w_0|<\rho,\ w_0\in E_2\}\subset E_1$. 
  $|F(w)|^p$ is subharmonic on $\Omega_+$ as $F(w)$ is analytic on it, 
  and for all $w_0=u_0+\mathrm{i} v_0\in E_2$,
  \begin{align*}
    |F(w_0)|^p
    &\leqslant \frac{1}{\pi\rho^2} \iint_{|w-w_0|<\rho}
       |F(w)|^p \,\mathrm{d}\lambda(w)                             \\
    &\leqslant \frac{1}{\pi\rho^2} \int_{\tau_1}^{\tau_2}\!\!
       \int_{\{\zeta\in\Gamma\colon |\mathrm{Re}\,\zeta-u_0|<\rho\}}
         |F(\zeta+\mathrm{i}\tau)|^p |\mathrm{d}\zeta|\,\mathrm{d}\tau.
  \end{align*}
  
  Since $\int_{\Gamma} |F(\zeta+\mathrm{i}\tau)|^p |\mathrm{d}\zeta|$ 
  is bounded for $\tau\in[\tau_1,\tau_2]$, we know that 
  \[\frac{1}{\pi\rho^2} \int_{\tau_1}^{\tau_2}\!\!
     \int_{\Gamma} |F(\zeta+\mathrm{i}\tau)|^p
       |\mathrm{d}\zeta|\,\mathrm{d}\tau\] 
  is bounded. Since $\rho$ is fixed, by Lebesgue's dominated convergence theorem,
  \[\frac{1}{\pi\rho^2} \int_{\tau_1}^{\tau_2}\!\! 
      \int_{\{\zeta\in\Gamma\colon |\mathrm{Re}\,\zeta-u_0|<\rho\}}
      |F(\zeta+\mathrm{i}\tau)|^p |\mathrm{d}\zeta|\,\mathrm{d}\tau\to 0
      \text{ as } |u_0|\to\infty,\]
  and this shows that $F(w_0)\to 0$ uniformly as $|u_0|\to\infty$.
\end{proof}

\begin{lemma}\label{lem-170522-1043}
  If $1\leqslant p< \infty$ and $F(w)\in H^p(\Omega_+)$, then for each $\tau>0$ 
  and every $w_0=u_0+\mathrm{i}a(u_0)+\mathrm{i}\sigma$, 
  where $u_0$, $\sigma\in\mathbb{R}$,
  \[\frac{1}{2\pi\mathrm{i}}\int_{\Gamma_\tau}
      \frac{F(\zeta)}{\zeta-w_0}\,\mathrm{d}\zeta=\left\{\!\!
      \begin{array}{ll}
        F(w_0) & \text{if } \sigma>\tau,\\
        0 & \text{if } \sigma<\tau.
      \end{array}\right.\]
  Here, $\Gamma_{\tau}= \Gamma+\mathrm{i}\tau
    = \{\zeta+\mathrm{i}\tau\colon \zeta\in\Gamma\}$.
\end{lemma}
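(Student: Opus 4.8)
The plan is to apply the Cauchy integral formula on a bounded ``curved rectangle'' lying between $\Gamma_\tau$ and a higher translate $\Gamma_T$, and then let the rectangle grow. Fix $T>\max\{\tau,\sigma\}$, and for $R>0$ set
\[D_{R,T}=\{u+\mathrm{i}v\colon |u|<R,\ a(u)+\tau<v<a(u)+T\},\]
whose boundary consists of the arc $\gamma_\tau^R$ of $\Gamma_\tau$ over $|u|\leqslant R$, the arc $\gamma_T^R$ of $\Gamma_T$ over $|u|\leqslant R$, and the two vertical segments $V_{\pm R}$ joining their endpoints. Since $\tau>0$, $\overline{D_{R,T}}$ is a compact subset of $\Omega_+$, so $\zeta\mapsto F(\zeta)/(\zeta-w_0)$ is analytic on a neighbourhood of $\overline{D_{R,T}}$ apart from a possible simple pole at $w_0$; moreover $w_0\in D_{R,T}$ exactly when $\sigma>\tau$ and $R>|u_0|$, whereas $w_0\notin\overline{D_{R,T}}$ when $\sigma<\tau$. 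With $\partial D_{R,T}$ positively oriented and all arc integrals taken in the direction of increasing $u$ (resp.\ increasing imaginary part on $V_{\pm R}$), the Cauchy integral formula, resp.\ Cauchy's theorem, yields
\[\frac1{2\pi\mathrm{i}}\Big(\int_{\gamma_\tau^R}-\int_{\gamma_T^R}+\int_{V_R}-\int_{V_{-R}}\Big)\frac{F(\zeta)}{\zeta-w_0}\,\mathrm{d}\zeta=\begin{cases}F(w_0),&\sigma>\tau,\\ 0,&\sigma<\tau,\end{cases}\]
the orientation bookkeeping being precisely what produces the normalising factor $\tfrac1{2\pi\mathrm{i}}$ in the statement.

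Next I would let $R\to\infty$. On $\Gamma_\tau$ (and on $\Gamma_T$) the factor $(\zeta-w_0)^{-1}$ is bounded and decays like $|u|^{-1}$ as $|u|\to\infty$, hence lies in $L^q$ for every $q\in(1,\infty]$; since $\lVert F\rVert_{L^p(\Gamma_{T'})}\leqslant\lVert F\rVert_{H^p(\Omega_+)}$ for all $T'>0$, H\"older's inequality (with $q$ conjugate to $p$, interpreting $q=\infty$ when $p=1$) shows that $F(\zeta)/(\zeta-w_0)\in L^1(\Gamma_\tau)\cap L^1(\Gamma_T)$, so $\int_{\gamma_\tau^R}\to\int_{\Gamma_\tau}$ and $\int_{\gamma_T^R}\to\int_{\Gamma_T}$. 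For the vertical parts, a point of $V_{\pm R}$ has the form $\zeta(\pm R)+\mathrm{i}\tau_0$ with $\tau_0\in[\tau,T]$, and Lemma~\ref{lem-160628-0820}, applied with $0<\tau_1<\tau$ and $\tau_2>T$ and using $\sup_{\tau_0}\int_\Gamma|F(\zeta+\mathrm{i}\tau_0)|^p\,|\mathrm{d}\zeta|\leqslant\lVert F\rVert_{H^p(\Omega_+)}^p$, gives $F(\zeta(\pm R)+\mathrm{i}\tau_0)\to0$ uniformly in $\tau_0\in[\tau,T]$ as $R\to\infty$; since $V_{\pm R}$ has fixed length $T-\tau$ and $|\zeta-w_0|\geqslant|R-u_0|\to\infty$ on it, $\int_{V_{\pm R}}\to0$. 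Hence
\[\frac1{2\pi\mathrm{i}}\int_{\Gamma_\tau}\frac{F(\zeta)}{\zeta-w_0}\,\mathrm{d}\zeta-\frac1{2\pi\mathrm{i}}\int_{\Gamma_T}\frac{F(\zeta)}{\zeta-w_0}\,\mathrm{d}\zeta=\begin{cases}F(w_0),&\sigma>\tau,\\ 0,&\sigma<\tau.\end{cases}\]

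The remaining step, which I expect to be the main obstacle, is to let $T\to\infty$ and show $\int_{\Gamma_T}F(\zeta)(\zeta-w_0)^{-1}\,\mathrm{d}\zeta\to0$; the difficulty is that $\int_{\Gamma_T}|F|\,|\mathrm{d}\zeta|$ need not even be finite when $p>1$, so boundedness of $\lVert F\rVert_{L^p(\Gamma_T)}$ alone is not enough. It rests on a geometric lower bound obtained from the Lipschitz condition: writing a point of $\Gamma_T$ as $\zeta(u)+\mathrm{i}T$ and using $|a(u)-a(u_0)|\leqslant M|u-u_0|$, one checks, splitting according to whether $|u-u_0|$ is at most or greater than $(T-\sigma)/(2M)$, that $|\zeta-w_0|\geqslant c(M)(T-\sigma)$ on $\Gamma_T$ and, by the same splitting, that $\int_{\Gamma_T}|\zeta-w_0|^{-q}\,|\mathrm{d}\zeta|\leqslant C(M,q)(T-\sigma)^{1-q}$ for each $q\in(1,\infty)$. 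Consequently, for $p=1$ one has $\big|\int_{\Gamma_T}F(\zeta)(\zeta-w_0)^{-1}\,\mathrm{d}\zeta\big|\leqslant\lVert F\rVert_{H^1(\Omega_+)}\big/\big(c(M)(T-\sigma)\big)\to0$, while for $1<p<\infty$, H\"older's inequality with $q=p/(p-1)$ gives $\big|\int_{\Gamma_T}F(\zeta)(\zeta-w_0)^{-1}\,\mathrm{d}\zeta\big|\leqslant\lVert F\rVert_{H^p(\Omega_+)}\big(C(M,q)(T-\sigma)^{1-q}\big)^{1/q}\to0$. Feeding this into the previous display produces the asserted identity in both cases. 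Apart from this estimate and the use of Lemma~\ref{lem-160628-0820}, the only care needed is the orientation bookkeeping on $\partial D_{R,T}$, which is routine.
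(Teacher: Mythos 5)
Your proof is correct and follows essentially the same route as the paper's: a Cauchy-theorem argument on the curved rectangle between $\Gamma_\tau$ and $\Gamma_T$, Lemma~\ref{lem-160628-0820} to kill the vertical sides as $R\to\infty$, and the Lipschitz lower bound $|\zeta-w_0|\geqslant c(M)(T-\sigma)$ on $\Gamma_T$ to kill the top as $T\to\infty$. The one difference is that the paper runs this argument only for $p=1$ and cites \cite{MC97} for $p>1$, whereas your H\"older estimate $\int_{\Gamma_T}|\zeta-w_0|^{-q}\,|\mathrm{d}\zeta|\leqslant C(M,q)(T-\sigma)^{1-q}$ makes the whole range $1\leqslant p<\infty$ self-contained, which is a small but genuine improvement.
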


\begin{proof}
  If $p>1$, then the lemma was proved in \cite{MC97}, and we only need to 
  treat the $p=1$ case. Let $\tau'>\max\{\tau,\sigma\}$, $R>0$, and 
  $E=\{u+\mathrm{i}v\colon 
    -R<x <R,\ a(u)+\tau< y< a(u)+\tau'\}\subset\Omega_+$ 
  with its curve boundary $\partial E$ oriented such that 
  $E$ is on the ``left'' side of $\partial E$. 
  Denote $\partial E$ as $ABB'A'$, where
  \begin{align*}
    &AB= \{\zeta(u)+\mathrm{i}\tau\colon u\in[-R,R]\},\quad
     BB'= \{\zeta(R)+\mathrm{i}v\colon v\in[\tau,\tau']\}         \\
    &B'A'= \{\zeta(u)+\mathrm{i}\tau'\colon u\in[-R,R]\},\quad
     A'A= \{\zeta(-R)+\mathrm{i}v\colon v\in[\tau,\tau']\},
  \end{align*}
  then
  \[\frac{1}{2\pi\mathrm{i}}\int_{\partial E}
      \frac{F(\zeta)}{\zeta-w_0}\,\mathrm{d}\zeta=\left\{\!\!
      \begin{array}{ll}
        F(w_0) & \text{if } \sigma>\tau,\\
        0 & \text{if } \sigma<\tau,
      \end{array}
      \right.\]
  since $F(w)$ is analytic on $\Omega_+$.
  
  Let $M_R=\max\{|F(w)|\colon w\in BB'\}$, if we let $R\to\infty$, 
  then $M_R\to 0$ by lemma~\ref{lem-160628-0820}, and 
  \begin{align*}
  \Big|\int_{BB'} \frac{F(\zeta)}{\zeta-w_0}\,\mathrm{d}\zeta\Big|
  &\leqslant \int_{\tau}^{\tau'} \frac{M_R\,\mathrm{d} t}
      {|R+\mathrm{i} a(R)+\mathrm{i} t-w_0|}                  \\
  &\leqslant \int_{\tau}^{\tau'} \frac{M_R\,\mathrm{d} t}{|R-u_0|}
   = \frac{(\tau'-\tau)M_R}{|R-u_0|}
   \to 0.
  \end{align*}
  We also have $|\int_{AA'} F(\zeta)(\zeta-w_0)^{-1}\,\mathrm{d}\zeta|$  
  as $R\to \infty$. Since $\int_{\partial E}= 
    \int_{AB}+\int_{BB'}+\int_{B'A'}+\int_{A'A}$, 
  we get that
  \[\frac{1}{2\pi\mathrm{i}}\Big(
      \int_{\Gamma_{\tau}}-\int_{\Gamma_{\tau'}}\Big)
      \frac{F(\zeta)}{\zeta-w_0}\,\mathrm{d}\zeta=\left\{\!\!\!
      \begin{array}{ll}
        F(w_0) & \text{if } \sigma>\tau,\\
        0 & \text{if } \sigma<\tau.
      \end{array}
      \right.\]
      
  We know that
  \[\int_{\Gamma_{\tau'}}|F(\zeta)||\mathrm{d}\zeta|
    = \int_{\Gamma}|F(\zeta+\mathrm{i}\tau')||\mathrm{d}\zeta|
    \leqslant \lVert F\rVert_{H^1(\Omega_+)},\]
  and 
  \[\bigg|\int_{\Gamma_{\tau'}} 
      \frac{F(\zeta)}{\zeta-w_0}\,\mathrm{d}\zeta\bigg|
    \leqslant \int_{\Gamma_{\tau'}} |F(\zeta)|\,|\mathrm{d}\zeta|
      \cdot \sup_{\zeta\in \Gamma_{\tau'}}\frac1{|\zeta-w_0|}
    \leqslant \lVert F\rVert_{H^1(\Omega_+)}
      \frac{\sqrt{1+M^2}}{\tau'-\sigma}.\] 
  Now we have  
  \[\int_{\Gamma_{\tau'}} \frac{F(\zeta)}{\zeta-w_0}\,\mathrm{d}\zeta
      \to 0 \text{ as } \tau'\to\infty,\]
  and thus finishes the proof of the lemma.
\end{proof}

\begin{theorem}\label{thm-170803-1020}
  If $1\leqslant p< \infty$, then $F(w)\in H^p(\Omega_+)$ is the Cauchy integral 
  of its non-tangential boundary limit $F(\zeta)$, that is 
  \[F(w)= \frac1{2\pi\mathrm{i}} \int_\Gamma 
      \frac{F(\zeta)}{\zeta-w}\,\mathrm{d}\zeta,
    \quad\text{for } w\in\Omega_+.\]
  And we also have 
  \[0= \frac1{2\pi\mathrm{i}} \int_\Gamma 
        \frac{F(\zeta)}{\zeta-w}\,\mathrm{d}\zeta,
      \quad\text{for } w\in\Omega_-.\]
\end{theorem}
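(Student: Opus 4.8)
The plan is to pass to the limit $\tau\to 0$ in Lemma~\ref{lem-170522-1043}. Fix $w\in\Omega_+$, write $w=u_0+\mathrm{i}a(u_0)+\mathrm{i}\sigma$ with $\sigma>0$, and choose any $\tau$ with $0<\tau<\sigma$. By Lemma~\ref{lem-170522-1043},
\[
  F(w)=\frac1{2\pi\mathrm{i}}\int_{\Gamma_\tau}\frac{F(\zeta)}{\zeta-w}\,\mathrm{d}\zeta
      =\frac1{2\pi\mathrm{i}}\int_{\Gamma}\frac{F(\zeta+\mathrm{i}\tau)}{\zeta+\mathrm{i}\tau-w}\,\mathrm{d}\zeta
      =\frac1{2\pi\mathrm{i}}\int_{\mathbb{R}}\frac{F(\zeta(u)+\mathrm{i}\tau)}{\zeta(u)+\mathrm{i}\tau-w}\,(1+\mathrm{i}a'(u))\,\mathrm{d}u.
\]
So it suffices to show that the right-hand integral converges, as $\tau\to 0$, to $\frac1{2\pi\mathrm{i}}\int_{\Gamma}\frac{F(\zeta)}{\zeta-w}\,\mathrm{d}\zeta$, where $F(\zeta)$ is the non-tangential boundary limit supplied by Theorem~\ref{thm-170801-1650}.

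For the $p=1$ case I would argue as follows. The denominator is uniformly bounded below: since $w$ lies strictly above $\Gamma$ there is $c=c(w)>0$ with $|\zeta(u)+\mathrm{i}\tau-w|\geqslant c$ for all $u\in\mathbb{R}$ and all $\tau\in[0,\sigma/2]$ (the vertical distance from $w$ to $\Gamma_\tau$ stays bounded away from $0$). Hence $\zeta\mapsto 1/(\zeta-w)$ is bounded on $\Gamma_\tau$ uniformly in small $\tau$, and
\[
  \Big|\int_{\Gamma}\frac{F(\zeta+\mathrm{i}\tau)-F(\zeta)}{\zeta-w}\,\mathrm{d}\zeta\Big|
   \leqslant \frac1c\,\|F(\cdot+\mathrm{i}\tau)-F\|_{L^1(\Gamma,|\mathrm{d}\zeta|)}
   +\Big|\int_{\Gamma}F(\zeta)\Big(\tfrac1{\zeta+\mathrm{i}\tau-w}-\tfrac1{\zeta-w}\Big)\mathrm{d}\zeta\Big|.
\]
The first term tends to $0$ by Theorem~\ref{thm-170801-1650}; the second tends to $0$ by dominated convergence, since $F(\zeta)\in L^1(\Gamma)$ and the bracketed kernels are bounded uniformly in $\tau$ and converge pointwise to $0$. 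This proves the Cauchy formula for $p=1$, and for $1<p<\infty$ one either cites \cite{MC97} directly or observes that $H^p(\Omega_+)\subset H^1_{\mathrm{loc}}$-type bounds together with the same $L^p\to L^1$ passage (using $|1/(\zeta-w)|\leqslant c^{-1}$ and Hölder, since a bounded-below kernel times an $L^p$ function of $\zeta$ on a set of locally finite measure is locally $L^1$, and the tails are controlled because $|\zeta-w|^{-1}$ decays) give the result; the cleanest route is to note that $F\in H^p(\Omega_+)\subset H^1(\Omega_+)$ fails in general, so instead I would truncate: for the formula at a fixed $w$ only the behaviour of $1/(\zeta-w)$ matters, which is $O(|\zeta|^{-1})$, so $F(\zeta)/(\zeta-w)\in L^1(\Gamma)$ whenever $F(\zeta)\in L^p(\Gamma)$ and $p\geqslant 1$ by Hölder against $(1+|\zeta|)^{-1}\in L^{q}$ on the relevant range — this requires $q<\infty$, i.e.\ $p>1$, and the endpoint $p=1$ is exactly the case handled above by the direct estimate.

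For the second identity, with $w\in\Omega_-$: I would again start from Lemma~\ref{lem-170522-1043}. Writing $w=u_0+\mathrm{i}a(u_0)+\mathrm{i}\sigma$ with $\sigma<0$, pick $\tau>0$; then $\sigma<\tau$, so the lemma gives $\frac1{2\pi\mathrm{i}}\int_{\Gamma_\tau}\frac{F(\zeta)}{\zeta-w}\,\mathrm{d}\zeta=0$ for every $\tau>0$. Now $w\in\Omega_-$ is strictly below $\Gamma$, so $\mathrm{dist}(w,\Gamma_\tau)$ is bounded below uniformly for $\tau\in[0,1]$, say, and the very same limiting argument as above (norm convergence $F(\cdot+\mathrm{i}\tau)\to F$ in $L^1(\Gamma)$ plus dominated convergence for the kernel) lets $\tau\to 0$ and yields $\frac1{2\pi\mathrm{i}}\int_{\Gamma}\frac{F(\zeta)}{\zeta-w}\,\mathrm{d}\zeta=0$.

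The main obstacle is the low-integrability bookkeeping in the first paragraph: one must be sure that $F(\zeta)/(\zeta-w)$ is genuinely integrable on the unbounded curve $\Gamma$ and that the difference quotients are dominated uniformly in $\tau$. Both follow once one records the two facts $\inf_{\tau\in[0,\sigma/2]}\mathrm{dist}(w,\Gamma_\tau)>0$ and $|\zeta-w|^{-1}=O(|\zeta|^{-1})$ as $|\zeta|\to\infty$ along $\Gamma$; combined with $F(\zeta)\in L^p(\Gamma)$, $p\geqslant1$, and Theorem~\ref{thm-170801-1650}'s norm convergence, the dominated-convergence passage is routine. I expect no difficulty beyond carefully stating these uniform bounds.
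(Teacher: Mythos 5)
Your proposal matches the paper's own proof in all essentials: both start from Lemma~\ref{lem-170522-1043} applied on $\Gamma_\tau$, let $\tau\to 0$, and split the difference of the two Cauchy integrals into a term controlled by the $L^p(\Gamma,|\mathrm{d}\zeta|)$ norm convergence of Theorem~\ref{thm-170801-1650} together with the uniform lower bound $|\zeta+\mathrm{i}\tau-w|\geqslant |\sigma|/(2\sqrt{1+M^2})$, and a term handled by dominated convergence; the $w\in\Omega_-$ case is treated identically. The only cosmetic differences are that the left-hand side of your displayed inequality should read $\bigl|\int_\Gamma F(\zeta+\mathrm{i}\tau)(\zeta+\mathrm{i}\tau-w)^{-1}\,\mathrm{d}\zeta-\int_\Gamma F(\zeta)(\zeta-w)^{-1}\,\mathrm{d}\zeta\bigr|$ rather than what is written, and that for $p>1$ the paper simply cites \cite{MC97} where you sketch a H\"older variant of the same estimate; neither affects correctness.
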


\begin{proof}
  The case of $p>1$ was proved by using the same method as in \cite{MC97}, 
  and we shall let $F(w)\in H^1(\Omega_+)$.
  For fixed $w_0\in\Omega_\pm$, we could write $w_0=\zeta_0+\mathrm{i\sigma}$, 
  where $\zeta_0\in\Gamma$ and $\sigma\neq 0$, then 
  if $0<\tau<\frac{|\sigma|}2$, we have, by Lemma~\ref{lem-170522-1043},
  \[\frac1{2\pi\mathrm{i}} \int_\Gamma 
      \frac{F(\zeta+\mathrm{i}\tau)\,\mathrm{d}\zeta}
           {\zeta+\mathrm{i}\tau-w_0}
    =\left\{\!\!
          \begin{array}{ll}
            F(w_0) & \text{if $\sigma>0$ or $w_0\in\Omega_+$},\\
            0 & \text{if $\sigma<0$ or $w_0\in\Omega_-$},
          \end{array}\right.\]
  then
  \begin{align*}
    I
    &= \bigg|\frac1{2\pi\mathrm{i}} \int_\Gamma 
          \frac{F(\zeta+\mathrm{i}\tau)\,\mathrm{d}\zeta}
               {\zeta+\mathrm{i}\tau-w_0}
          - \frac1{2\pi\mathrm{i}} \int_\Gamma 
            \frac{F(\zeta)\,\mathrm{d}\zeta}{\zeta-w_0}\bigg|       \\
    &\leqslant \frac1{2\pi} \int_\Gamma 
          \frac{|F(\zeta+\mathrm{i}\tau)-F(\zeta)|}
               {|\zeta+\mathrm{i}\tau-w_0|} |\mathrm{d}\zeta|      \\
    &{\phantom{\leqslant{}}}{}+ \frac1{2\pi} \int_\Gamma 
         \Big|\frac1{\zeta+\mathrm{i}\tau-w_0}- \frac1{\zeta-w_0}\Big| 
         |F(\zeta)\,\mathrm{d}\zeta|                           \\
    &= I_1+ I_2.
  \end{align*}
  We get $|\zeta+\mathrm{i}\tau-w_0|\geqslant \frac{|\sigma|}{2\sqrt{1+M^2}}$ 
  from $0<\tau<\frac{|\sigma|}2$, and by Theorem~\ref{thm-170801-1650},
  \[I_1\leqslant \frac1{2\pi}\cdot\frac{2\sqrt{1+M^2}}{|\sigma|}
        \lVert{F(\cdot+\mathrm{i}\tau)-F}\rVert_{L^1(\Gamma,|\mathrm{d}\zeta|)}
    \to 0 \text{ as } \tau\to 0.\]
  Since $|\zeta-w_0|\geqslant \frac{|\sigma|}{\sqrt{1+M^2}}$ and 
  $|\zeta+\mathrm{i}\tau-w_0|\geqslant \frac{|\sigma|}{2\sqrt{1+M^2}}$, 
  the integrand in $I_2$ is bounded by 
  $\frac{3\sqrt{1+M^2}}{|\sigma|}|F(\zeta)|$, 
  which is integrable as $F(\zeta)\in L^1(\Gamma, |\mathrm{d}\zeta|)$, 
  thus $I_2\to 0$ as $\tau\to 0$ by Lebesgue's dominated convergence theorem.
  
  Then we know that $I$ could be as small as we wish, which means that
  \[\frac1{2\pi\mathrm{i}} \int_\Gamma 
      \frac{F(\zeta)}{\zeta-w_0}\,\mathrm{d}\zeta
    =\left\{\!\!
        \begin{array}{ll}
          F(w_0) & \text{if $w_0\in\Omega_+$},\\
          0 & \text{if $w_0\in\Omega_-$},
        \end{array}\right.\]
  and the theorem is proved.
\end{proof}

\begin{theorem}\label{thm-170803-1110}
  If $1\leqslant p< \infty$, $F(w)\in H^p(\Omega_+)$, then 
  \[F(w)= \int_\Gamma K_{\mathrm{i}\tau}(\zeta,\zeta_0) 
            F(\zeta)\,\mathrm{d}\zeta,\]
  for $w= \zeta_0+\mathrm{i}\tau$ with $\zeta_0\in\Gamma$ and $\tau>0$.
\end{theorem}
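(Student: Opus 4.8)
The plan is to deduce this directly from Theorem~\ref{thm-170803-1020} by splitting the kernel $K_{\mathrm{i}\tau}(\zeta,\zeta_0)$ into its two simple-pole pieces. First I would set $w=\zeta_0+\mathrm{i}\tau$ and note that, since $\tau>0$ and $\zeta_0\in\Gamma$, we have $w=\zeta_0+\mathrm{i}\tau\in\Omega_+$ while $\zeta_0-\mathrm{i}\tau\in\Omega_-$. By the definition of $K_z(\zeta,\zeta_0)$,
\[
  K_{\mathrm{i}\tau}(\zeta,\zeta_0)
  =\frac1{2\pi\mathrm{i}}\Bigl(\frac1{\zeta-(\zeta_0+\mathrm{i}\tau)}
      -\frac1{\zeta-(\zeta_0-\mathrm{i}\tau)}\Bigr)
  =\frac1{2\pi\mathrm{i}}\Bigl(\frac1{\zeta-w}
      -\frac1{\zeta-(\zeta_0-\mathrm{i}\tau)}\Bigr).
\]

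Next I would check that both pieces give absolutely convergent integrals against $F(\zeta)$, so that the integral of $K_{\mathrm{i}\tau}(\zeta,\zeta_0)F(\zeta)$ may be split into two. By Theorem~\ref{thm-170801-1650}, $F(\zeta)\in L^p(\Gamma,|\mathrm{d}\zeta|)$; for any point $w_0\in\Omega_\pm$ lying at positive distance $d$ from $\Gamma$, the kernel $(\zeta-w_0)^{-1}$ is bounded by $1/d$ on $\Gamma$ and is $O(1/|\zeta|)$ as $|\zeta|\to\infty$. When $p=1$ this yields absolute convergence at once; when $1<p<\infty$, Hölder's inequality together with $1/|\zeta|\in L^{p'}$ near infinity gives the same (this convergence is in any case already implicit in the statement of Theorem~\ref{thm-170803-1020}). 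Hence
\[
  \int_\Gamma K_{\mathrm{i}\tau}(\zeta,\zeta_0)F(\zeta)\,\mathrm{d}\zeta
  =\frac1{2\pi\mathrm{i}}\int_\Gamma\frac{F(\zeta)}{\zeta-w}\,\mathrm{d}\zeta
   -\frac1{2\pi\mathrm{i}}\int_\Gamma
      \frac{F(\zeta)}{\zeta-(\zeta_0-\mathrm{i}\tau)}\,\mathrm{d}\zeta.
\]

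Finally I would invoke Theorem~\ref{thm-170803-1020}: since $w\in\Omega_+$, the first integral equals $F(w)$; since $\zeta_0-\mathrm{i}\tau\in\Omega_-$, the second integral equals $0$. Subtracting gives $F(w)=\int_\Gamma K_{\mathrm{i}\tau}(\zeta,\zeta_0)F(\zeta)\,\mathrm{d}\zeta$, as desired. There is essentially no hard step here; the only point needing a line of care is the absolute-convergence justification for splitting the kernel, which reduces to the decay $(\zeta-w_0)^{-1}=O(|\zeta|^{-1})$ combined with $F(\zeta)\in L^p(\Gamma)$ and $p<\infty$.
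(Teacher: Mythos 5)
Your proposal is correct and is essentially identical to the paper's own proof: the paper likewise writes $w'=\zeta_0-\mathrm{i}\tau\in\Omega_-$, applies Theorem~\ref{thm-170803-1020} to get $F(w)$ from the $\Omega_+$ pole and $0$ from the $\Omega_-$ pole, and subtracts to recover $K_{\mathrm{i}\tau}(\zeta,\zeta_0)$. The only difference is that you spell out the absolute-convergence justification for splitting the integral, which the paper leaves implicit.
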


\begin{proof}
  Let $w'= \zeta_0-\mathrm{i}\tau$, then $w'\in\Omega_-$. 
  By Theorem~\ref{thm-170803-1020}, we have
  \[F(w)= \frac1{2\pi\mathrm{i}} \int_\Gamma 
          \frac{F(\zeta)}{\zeta-w}\,\mathrm{d}\zeta,\quad
    \text{and } 
    0= \frac1{2\pi\mathrm{i}} \int_\Gamma 
          \frac{F(\zeta)}{\zeta-w'}\,\mathrm{d}\zeta.\]
  Subtract the second equation from the first one, then 
  \begin{align*}
    F(w)
    &= \frac1{2\pi\mathrm{i}} \int_\Gamma F(\zeta) \Big(
         \frac{1}{\zeta-w}- \frac{1}{\zeta-w'}\Big)\mathrm{d}\zeta    \\
    &= \int_\Gamma K_{\mathrm{i}\tau}(\zeta,\zeta_0) 
          F(\zeta)\,\mathrm{d}\zeta,
  \end{align*}
  by the definition of $K_z(\zeta,\zeta_0)$.
\end{proof}

We then introduce some lemmas about $K_z(\zeta,\zeta_0)$.
\begin{lemma}[\cite{DL17}]\label{lem-170622-2050}
  There exists a constant $C>0$, depending on $M$, such that
  \[|K_{\mathrm{i}\tau}(\zeta,\zeta_0)|
    \leqslant \frac{C\tau}{|\zeta-\zeta_0|^2+\tau^2},\quad
    \text{for } \tau\in\mathbb{R}.\]
\end{lemma}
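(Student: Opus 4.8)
The plan is to reduce the inequality to a pointwise lower bound on the denominator of the explicit expression~\eqref{equ-170623-1150}. Putting $z=\mathrm{i}\tau$ there gives
\[K_{\mathrm{i}\tau}(\zeta,\zeta_0)=\frac{1}{\pi}\cdot\frac{\tau}{(\zeta-\zeta_0)^2+\tau^2},\]
so $|K_{\mathrm{i}\tau}(\zeta,\zeta_0)|=\frac{|\tau|}{\pi\,|(\zeta-\zeta_0)^2+\tau^2|}$, and it is enough to find a constant $c=c(M)>0$ such that
\[\big|(\zeta-\zeta_0)^2+\tau^2\big|\geqslant c\,\big(|\zeta-\zeta_0|^2+\tau^2\big)\qquad\text{for all }\zeta,\zeta_0\in\Gamma;\]
one then takes $C=1/(\pi c)$. (The stated inequality is really about $|\tau|$; for $\tau>0$ this is the claim as written.)

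For the key estimate I would set $s=\mathrm{Re}(\zeta-\zeta_0)$ and $t=\mathrm{Im}(\zeta-\zeta_0)$. Since $\zeta(u)=u+\mathrm{i}a(u)$ with $a$ being $M$-Lipschitz, we have $|t|\leqslant M|s|$. Now $(\zeta-\zeta_0)^2+\tau^2$ has real part $s^2-t^2+\tau^2$ and imaginary part $2st$, and expanding the square of its modulus gives the identity
\[\big|(\zeta-\zeta_0)^2+\tau^2\big|^2=\big(s^2+t^2+\tau^2\big)^2-4t^2\tau^2=\big(|\zeta-\zeta_0|^2+\tau^2\big)^2-4t^2\tau^2.\]
So the whole problem becomes: bound $4t^2\tau^2$ by a fixed fraction, strictly less than $1$, of $(s^2+t^2+\tau^2)^2$.

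This is where the Lipschitz condition enters and is the main (and only real) obstacle. Using $t^2\leqslant M^2s^2$, note that for fixed $t^2$ and $\tau^2$ the quantity $4t^2\tau^2/(s^2+t^2+\tau^2)^2$ is largest when $s^2$ is as small as allowed, i.e.\ $s^2=t^2/M^2$; substituting and optimizing the resulting one-variable expression in $\tau^2/t^2$ shows its maximum over all admissible configurations is exactly $M^2/(1+M^2)<1$. Hence $\big|(\zeta-\zeta_0)^2+\tau^2\big|^2\geqslant\frac{1}{1+M^2}\big(|\zeta-\zeta_0|^2+\tau^2\big)^2$, so $c=(1+M^2)^{-1/2}$ and the lemma holds with $C=\sqrt{1+M^2}/\pi$. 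If one prefers to avoid the optimization, a short case split works just as well: when $t^2\leqslant\frac12(s^2+\tau^2)$ the real part alone gives $|(\zeta-\zeta_0)^2+\tau^2|\geqslant s^2-t^2+\tau^2\geqslant\frac13(s^2+t^2+\tau^2)$, and otherwise the imaginary part $2|s||t|$ dominates, since then $|s|\geqslant|t|/M$ and $s^2+t^2+\tau^2<3t^2$, yielding the (cruder) admissible constant $c=\min\{1/3,\,2/(3M)\}$.
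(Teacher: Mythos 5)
The paper gives no proof of this lemma at all: it is quoted from \cite{DL17} without argument, so there is nothing internal to compare against. Your proof is correct and complete. The reduction to the lower bound $|(\zeta-\zeta_0)^2+\tau^2|\geqslant c\,(|\zeta-\zeta_0|^2+\tau^2)$ is the right move, the identity $|(\zeta-\zeta_0)^2+\tau^2|^2=(s^2+t^2+\tau^2)^2-4t^2\tau^2$ checks out (it also follows from factoring $(\zeta-\zeta_0+\mathrm{i}\tau)(\zeta-\zeta_0-\mathrm{i}\tau)$), and the Lipschitz bound $|t|\leqslant M|s|$ together with AM--GM applied to $\frac{1+M^2}{M^2}t^2+\tau^2$ does give $4t^2\tau^2\leqslant\frac{M^2}{1+M^2}(s^2+t^2+\tau^2)^2$, hence $c=(1+M^2)^{-1/2}$ and $C=\sqrt{1+M^2}/\pi$; the crude case split is also valid as a fallback. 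Your remark that the right-hand side should read $C|\tau|$ for the statement to make sense for all $\tau\in\mathbb{R}$ (the kernel is only used with $\tau>0$ elsewhere in the paper, but the lemma as stated allows negative $\tau$) is a fair catch. The only stylistic point: the claim that the supremum of $4t^2\tau^2/(s^2+t^2+\tau^2)^2$ is \emph{exactly} $M^2/(1+M^2)$ is asserted rather than shown, but since you only need it as an upper bound and the AM--GM step delivers that bound directly, nothing is missing.
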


\begin{corollary}\label{cor-170803-1040}
  If $1\leqslant p< \infty$, $F(\zeta)\in L^p(\Gamma,|\mathrm{d}\zeta|)$, 
  and we define
  \[F(w)
  = \int_\Gamma K_{\mathrm{i}\tau}(\zeta,\zeta_0) F(\zeta)
  \,\mathrm{d}\zeta,\]
  for $w=\zeta_0+\mathrm{i}\tau\in\Omega_+$, 
  where $\zeta_0\in\Gamma$ and $\tau>0$, then 
  \[\sup_{\tau>0} \int_{\Gamma_\tau} |F(w)|^p |\mathrm{d}w|< \infty.\]
\end{corollary}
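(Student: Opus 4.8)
The plan is to reduce the estimate on $\Omega_+$ to a known maximal‑function bound on $\Gamma$ identified with $\mathbb{R}$, using the pointwise kernel estimate of Lemma~\ref{lem-170622-2050}. Writing $\zeta=\zeta(v)=v+\mathrm{i}a(v)$ and $\zeta_0=\zeta(u)=u+\mathrm{i}a(u)$, the bound $|\zeta-\zeta_0|^2+\tau^2\geqslant c\big((u-v)^2+\tau^2\big)$ (which follows from $|u-v|\leqslant|\zeta-\zeta_0|\leqslant\sqrt{1+M^2}\,|u-v|$) turns Lemma~\ref{lem-170622-2050} into
\[
|K_{\mathrm{i}\tau}(\zeta,\zeta_0)|
  \leqslant \frac{C'\tau}{(u-v)^2+\tau^2},
\]
so that $|F(w)|$ for $w=\zeta_0+\mathrm{i}\tau$ is dominated by the Poisson integral (up to a constant) of the function $v\mapsto|F(\zeta(v))|$ against the kernel $P_\tau(u-v)=\tau/\pi\big((u-v)^2+\tau^2\big)$, after absorbing the arc‑length element $|\mathrm{d}\zeta|\leqslant\sqrt{1+M^2}\,\mathrm{d}v$.

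The key steps, in order: \textbf{(1)} Fix $\tau>0$ and parametrise $\Gamma_\tau$ by $u$; using the kernel estimate above, show $|F(\zeta(u)+\mathrm{i}\tau)|\leqslant C''\,\big(P_\tau * g\big)(u)$ where $g(v)=|F(\zeta(v))|\in L^p(\mathbb{R})$ by the identification of $L^p(\Gamma,|\mathrm{d}\zeta|)$ with $L^p(\mathbb{R})$. \textbf{(2)} Invoke the classical bound $\|P_\tau*g\|_{L^p(\mathbb{R})}\leqslant\|g\|_{L^p(\mathbb{R})}$ for $1\leqslant p<\infty$ (Young's inequality, since $\|P_\tau\|_{L^1}=1$), uniformly in $\tau$. \textbf{(3)} Convert back: $\int_{\Gamma_\tau}|F(w)|^p|\mathrm{d}w|\leqslant\sqrt{1+M^2}\int_{\mathbb{R}}|F(\zeta(u)+\mathrm{i}\tau)|^p\,\mathrm{d}u\leqslant C\|g\|_{L^p(\mathbb{R})}^p\leqslant C\|F\|_{L^p(\Gamma,|\mathrm{d}\zeta|)}^p$, with $C$ independent of $\tau$; take the supremum over $\tau>0$.

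\textbf{The main obstacle} is really only bookkeeping: one must be careful that the constant from Lemma~\ref{lem-170622-2050}, the comparability constants between $|\zeta-\zeta_0|$ and $|u-v|$, and the arc‑length distortion all combine into a single $\tau$‑independent constant, and that the convolution inequality is applied to the \emph{nonnegative} function $g=|F(\zeta(\cdot))|$ rather than to $F$ itself (legitimate since we only need an upper bound on $|F(w)|$). If $p>1$ one could alternatively bound $P_\tau*g$ pointwise by the Hardy–Littlewood maximal function and use its $L^p$‑boundedness, but the Young‑inequality route is cleaner and covers $p=1$ as well, so that is the approach I would present.
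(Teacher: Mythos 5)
Your argument is correct: the kernel bound of Lemma~\ref{lem-170622-2050} together with $|u-v|\leqslant|\zeta-\zeta_0|$, the comparability of $|\mathrm{d}\zeta|$ with $\mathrm{d}u$, and Young's inequality $\lVert P_\tau*g\rVert_{L^p}\leqslant\lVert g\rVert_{L^p}$ gives the uniform bound for all $1\leqslant p<\infty$. The paper itself only cites \cite{DL17} for $p>1$ and says the case $p=1$ is ``similarly proved,'' so you are supplying exactly the intended argument (the paper places Lemma~\ref{lem-170622-2050} immediately before the corollary for this purpose), just written out in full.
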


\begin{proof}
  The case of $p>1$ was proved in \cite{DL17}. If $p=1$, the lemma is similarly proved.
\end{proof}

\begin{corollary}
  If $0<p<q$, $F(w)\in H^p(\Omega_+)$, 
  and $F(\zeta)\in L^q(\Gamma,|\mathrm{d}\zeta|)$, 
  then $F(w)\in H^q(\Omega_+)$.
\end{corollary}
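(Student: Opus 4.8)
The plan is to prove the statement first in the range $1\leqslant p<q$, where the Cauchy/reproducing machinery of Section~4 is available, and then to bootstrap down to $0<p<1$ by a Blaschke factorization.

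\emph{Step 1 ($1\leqslant p<q$).} By Theorem~\ref{thm-170801-1650}, $F$ has a non-tangential boundary limit $F(\zeta)$ a.e.\ on $\Gamma$, and by hypothesis $F(\zeta)\in L^q(\Gamma,|\mathrm{d}\zeta|)$. Since $p\geqslant 1$, Theorem~\ref{thm-170803-1110} gives the reproducing formula
\[F(w)=\int_\Gamma K_{\mathrm{i}\tau}(\zeta,\zeta_0)F(\zeta)\,\mathrm{d}\zeta,\qquad w=\zeta_0+\mathrm{i}\tau\in\Omega_+,\ \zeta_0\in\Gamma,\ \tau>0.\]
I would then apply Corollary~\ref{cor-170803-1040} \emph{with exponent $q$} (legitimate because $q\geqslant 1$ and $F(\zeta)\in L^q$): it shows that the function defined by the integral on the right has uniformly bounded $L^q$ integrals over the curves $\Gamma_\tau$. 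Since by Theorem~\ref{thm-170803-1110} that function is exactly $F$, we conclude $\sup_{\tau>0}\int_{\Gamma_\tau}|F(w)|^q|\mathrm{d}w|<\infty$, i.e.\ $F\in H^q(\Omega_+)$.

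\emph{Step 2 ($0<p<1$, $p<q$).} Assuming $F\not\equiv 0$, factor $F=BG$ as in Theorem~\ref{thm-170801-1640}, where $B$ is the Blaschke product of the zeros of $F$ and $G=F/B\in H^p(\Omega_+)$ is zero-free on $\Omega_+$. Since $|B(\zeta)|=1$ a.e.\ on $\Gamma$ we have $|G(\zeta)|=|F(\zeta)|$ a.e., so $G(\zeta)\in L^q(\Gamma,|\mathrm{d}\zeta|)$; and since $|B(w)|<1$ on $\Omega_+$ it suffices to prove $G\in H^q(\Omega_+)$. Fix an integer $N$ with $Np>1$. As $\Omega_+$ is simply connected and $G$ has no zeros, $\phi:=G^{1/N}=\exp(\tfrac1N\log G)$ is a well-defined analytic function on $\Omega_+$; from $|\phi(w)|^{Np}=|G(w)|^p$ one reads off $\phi\in H^{Np}(\Omega_+)$, and (using Theorem~\ref{thm-170801-1650} and $|\phi(w)|=|G(w)|^{1/N}$) its boundary limit satisfies $|\phi(\zeta)|=|G(\zeta)|^{1/N}$ a.e., whence $\int_\Gamma|\phi(\zeta)|^{Nq}|\mathrm{d}\zeta|=\int_\Gamma|F(\zeta)|^q|\mathrm{d}\zeta|<\infty$ and $\phi(\zeta)\in L^{Nq}(\Gamma,|\mathrm{d}\zeta|)$. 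Now $1<Np<Nq<\infty$, so Step~1 applies to $\phi$ with the pair $(Np,Nq)$ and yields $\phi\in H^{Nq}(\Omega_+)$. Finally $\int_{\Gamma_\tau}|G(w)|^q|\mathrm{d}w|=\int_{\Gamma_\tau}|\phi(w)|^{Nq}|\mathrm{d}w|\leqslant\|\phi\|_{H^{Nq}(\Omega_+)}^{Nq}$, so $G\in H^q(\Omega_+)$ and therefore $F\in H^q(\Omega_+)$.

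I expect the only genuine difficulty to be Step~2: for $p<1$ there is no directly available Cauchy or $K$-reproducing formula, and the role of the Blaschke factorization is precisely to extract a zero-free factor $G$ whose root $G^{1/N}$ lands in a Hardy space of index $>1$, reducing the problem to Step~1. The remaining points --- that $G$ is genuinely zero-free (immediate from Theorem~\ref{thm-170801-1640}, since the zeros of $B$ exhaust those of $F$ with multiplicity), that $|\phi(w)|=|G(w)|^{1/N}$ passes to non-tangential boundary limits, and the elementary $\Gamma_\tau$-integral estimates --- are routine.
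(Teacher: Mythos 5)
Your proof is correct and follows essentially the same route as the paper: Blaschke factorization to extract a zero-free factor, an $N$-th root to land in $H^{Np}$ with $Np>1$, then the reproducing formula of Theorem~\ref{thm-170803-1110} combined with Corollary~\ref{cor-170803-1040} at the higher exponent. The only (harmless) organizational difference is that you treat $1\leqslant p$ directly without factorizing, whereas the paper runs the factorization argument uniformly for all $0<p<q$.
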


\begin{proof}
  For $F(w)\in H^p(\Omega_+)$, we write, by Theorem~\ref{thm-170801-1640}, 
  $F(w)=B(w)G(w)$, where $|B(\zeta)|=1$ a.e.\@ on $\Gamma$, 
  $G(w)\neq 0$ and is in $H^p(\Omega_+)$. Choose a positive integer~$n$ 
  such that $np>1$, and define $H(w)= G^{\frac1n}(w)$, 
  then $H(w)\in H^{np}(\Omega_+)$, $H(\zeta)$ exists a.e.\@ on $\Gamma$.
  By Theorem~\ref{thm-170803-1110}, 
  for $w= \zeta_0+\mathrm{i}\tau\in \Omega_+$, we have
  \[H(w)
    = \int_\Gamma K_{\mathrm{i}\tau}(\zeta,\zeta_0) H(\zeta)\,\mathrm{d}\zeta.\]
  Since $F(\zeta)= B(\zeta)G(\zeta)= B(\zeta)H^n(\zeta)$ 
  and $F(\zeta)\in L^q(\Gamma,|\mathrm{d}\zeta|)$, 
  we have $H(\zeta)\in L^{nq}(\Gamma,|\mathrm{d}\zeta|)$, 
  thus $H(w)\in H^{nq}(\Omega_+)$, by Corollary~\ref{cor-170803-1040}, 
  as $nq>np>1$. Now we could deduce, from 
  \[|F(w)|= |B(w)G(w)|= |B(w)H^n(w)|\leqslant |H(w)|^n,\]
  that $F(w)\in H^q(\Omega_+)$, which finishes the proof of the corollary.
\end{proof}

\begin{lemma}[\cite{DL17}]\label{lem-170629-2230}
  Suppose $\zeta_0=\zeta(u_0)\in\Gamma$, $\phi\in(0,\frac\pi2)$ are both fixed, 
  and $\zeta'(u_0)$ exists, then we could choose positive constants 
  $C$ and $\delta$, depending on $\phi$ and $\zeta_0$, respectively, 
  such that if $z+\zeta\in\Omega_\phi(\zeta_0)\cap\Omega_+$ 
  and $|z|<\delta$, then
  \[|K_z(\zeta,\zeta_0)|
  \leqslant \frac{C|z|}{|\zeta-\zeta_0|^2+|z|^2}.\]
\end{lemma}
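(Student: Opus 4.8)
The plan is to reduce the claim to an elementary lower bound on the denominator in \eqref{equ-170623-1150} and then treat two regimes separately. Since $|K_z(\zeta,\zeta_0)|=\frac1\pi\cdot\frac{|z|}{|(\zeta-\zeta_0)^2-z^2|}$ by \eqref{equ-170623-1150}, it suffices to produce $c>0$ (depending on $\phi$) and $\delta>0$ (depending on $\phi$ and $\zeta_0$) such that $|(\zeta-\zeta_0)^2-z^2|\geqslant c\bigl(|\zeta-\zeta_0|^2+|z|^2\bigr)$ under the hypotheses; the lemma then follows with $C=1/(\pi c)$. Writing $a=\zeta-\zeta_0$, I first record that $z\neq0$ and, by the remark after the definition of $\Omega_\phi(\zeta_0)$ (which gives $\zeta_0+z\in\Omega_+$ and $\zeta_0-z\in\Omega_-$), also $z\neq\pm a$, so $K_z(\zeta,\zeta_0)$ is well defined.

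The easy regime is $|a|\geqslant2|z|$: there $|a\pm z|\geqslant|a|/2$, hence $|a^2-z^2|\geqslant|a|^2/4\geqslant\tfrac15\bigl(|a|^2+|z|^2\bigr)$. The substantive regime is $|a|<2|z|$, in which $|\zeta-\zeta_0|<2\delta$, so $\zeta$ lies near $\zeta_0$; I may assume $a\neq0$ (otherwise the bound is trivial), say $\zeta=\zeta(u)$. This is where the hypothesis that $\zeta'(u_0)$ exists is used: from $\zeta(u)-\zeta(u_0)=(u-u_0)\zeta'(u_0)+o(|u-u_0|)$, together with $|u-u_0|\leqslant|\zeta(u)-\zeta(u_0)|$ and $|\zeta'(u_0)|\geqslant1$, the remainder is small relative to the linear term; so by shrinking $\delta$ (depending on $\phi$ and on the rate of decay of the $o(\cdot)$, i.e.\ on $\zeta_0$) I can force $\arg a\equiv\phi_0+\eta\pmod\pi$ with $|\eta|<\phi/2$, the linear term having argument $\phi_0$ or $\phi_0+\pi$. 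Meanwhile $\zeta_0+z\in\Omega_\phi(\zeta_0)$ gives $\arg z=\phi_0+\psi$ with $\psi\in(\phi,\pi-\phi)$. Consequently $\arg a-\arg z\equiv\eta-\psi\pmod\pi$ with $\eta-\psi\in(-\pi+\tfrac\phi2,-\tfrac\phi2)$, so $\omega:=\arg(a^2)-\arg(z^2)=2(\arg a-\arg z)$ is congruent mod $2\pi$ to a number in $(\phi,2\pi-\phi)$ and hence $\cos\omega\leqslant\cos\phi$. Then, with $s=|a|^2$, $t=|z|^2$ and using $2st\leqslant(s+t)^2/2$,
\begin{align*}
|a^2-z^2|^2 &= s^2+t^2-2st\cos\omega \geqslant s^2+t^2-2st\cos\phi \\
&= (s+t)^2-2st(1+\cos\phi) \geqslant \tfrac{1-\cos\phi}{2}(s+t)^2 = \sin^2(\phi/2)\,(s+t)^2,
\end{align*}
so $|a^2-z^2|\geqslant\sin(\phi/2)\bigl(|a|^2+|z|^2\bigr)$. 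Taking $c=\min\{1/5,\sin(\phi/2)\}$ then gives the claim; this is, in essence, the argument of \cite{DL17}.

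The step I expect to be the main obstacle is the regime $|a|<2|z|$: one must convert the differentiability of $a$ at $u_0$ into the quantitative fact that the chord $\zeta-\zeta_0$ is nearly parallel to the tangent direction $\phi_0$ of $\Gamma$ at $\zeta_0$, and then combine this with the cone condition, which keeps $\arg z$ at angular distance at least $\phi$ from $\phi_0$ modulo $\pi$ — exactly what prevents $a^2$ and $z^2$ from almost cancelling. Everything else (the case split, the triangle-inequality estimate, and the cosine identity) is routine, and it is worth noting that the Lipschitz constant $M$ does not enter the constants, only the geometry already encoded in $\Gamma$ and $\Omega_\phi(\zeta_0)$.
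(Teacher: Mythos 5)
Your argument is correct and complete: the reduction to the lower bound $|(\zeta-\zeta_0)^2-z^2|\geqslant c(|\zeta-\zeta_0|^2+|z|^2)$, the split into the regimes $|\zeta-\zeta_0|\geqslant 2|z|$ and $|\zeta-\zeta_0|<2|z|$, and in the latter the use of differentiability of $\zeta$ at $u_0$ (with $|\zeta'(u_0)|\geqslant 1$ and $|u-u_0|\leqslant|\zeta-\zeta_0|$) to pin $\arg(\zeta-\zeta_0)$ near the tangent direction modulo $\pi$, combined with the cone condition and the law of cosines, all check out. The paper itself gives no proof of this lemma (it is quoted from \cite{DL17}), so there is nothing internal to compare against, but your argument is the natural one and the constants $C=1/(\pi c)$ with $c=\min\{1/5,\sin(\phi/2)\}$ and $\delta$ chosen from the remainder estimate at $u_0$ are justified as stated.
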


\begin{corollary}\label{cor-170803-1050}
  If $1\leqslant p< \infty$, $F(\zeta)\in L^p(\Gamma,|\mathrm{d}\zeta|)$, 
  and $u_0$ is the Lebesgue point of $F(u+\mathrm{i}a(u))$ such that 
  $\zeta'(u_0)= |\zeta'(u_0)|\mathrm{e}^{\mathrm{i}\phi_0}$ exists,
  where $\phi_0\in(-\frac\pi2,\frac\pi2)$, 
  then for any $\phi\in(0,\frac{\pi}2)$, we have
  \[\lim_{\substack{z+\zeta_0\in\Omega_\phi(\zeta_0)\cap\Omega_+,\\ 
      z\to 0}}
  \int_\Gamma K_z(\zeta,\zeta_0)F(\zeta)\,\mathrm{d}\zeta
  = F(\zeta_0).\]
\end{corollary}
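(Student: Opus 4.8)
The plan is to treat $\int_\Gamma K_z(\zeta,\zeta_0)F(\zeta)\,\mathrm{d}\zeta$ as the value at $\zeta_0$ of an approximate identity applied to $F$, mimicking the classical proof that the Poisson integral of an $L^p$ function converges to the function at its Lebesgue points. First I would fix $\phi\in(0,\frac\pi2)$ and, combining the remark following the definition of the non-tangential boundary limit with Lemma~\ref{lem-170629-2230}, choose $\delta>0$ so small that whenever $z+\zeta_0\in\Omega_\phi(\zeta_0)$ and $0<|z|<\delta$ one has $\zeta_0+z\in\Omega_+$ and $\zeta_0-z\in\Omega_-$ (hence $\int_\Gamma K_z(\zeta,\zeta_0)\,\mathrm{d}\zeta=1$, by the identity recorded in Section~2) and, at the same time, the bound $|K_z(\zeta,\zeta_0)|\leqslant C|z|/(|\zeta-\zeta_0|^2+|z|^2)$ holds. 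Since $\int_\Gamma|K_z(\zeta,\zeta_0)|\,|F(\zeta)|\,|\mathrm{d}\zeta|<\infty$ for each fixed $z\neq0$, the normalization lets me write
\[
  \int_\Gamma K_z(\zeta,\zeta_0)F(\zeta)\,\mathrm{d}\zeta-F(\zeta_0)
  =\int_\Gamma K_z(\zeta,\zeta_0)\bigl(F(\zeta)-F(\zeta_0)\bigr)\,\mathrm{d}\zeta ,
\]
so it remains to prove the right-hand side tends to $0$ as $z\to0$ inside the cone.

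Next I would pass to the real variable. With $\zeta=\zeta(u)=u+\mathrm{i}a(u)$, put $g(u)=F(\zeta(u))$, so $g(u_0)=F(\zeta_0)$ and, by the identification of $L^p(\Gamma,|\mathrm{d}\zeta|)$ with $L^p(\mathbb{R},\mathrm{d}u)$ from Section~2, $g\in L^p(\mathbb{R})$. Using $|\zeta(u)-\zeta(u_0)|\geqslant|u-u_0|$ and $|\mathrm{d}\zeta|\leqslant\sqrt{1+M^2}\,\mathrm{d}u$, the kernel bound gives
\[
  \Bigl|\int_\Gamma K_z(\zeta,\zeta_0)\bigl(F(\zeta)-F(\zeta_0)\bigr)\,\mathrm{d}\zeta\Bigr|
  \leqslant C\sqrt{1+M^2}\,|z|\int_{\mathbb{R}}\frac{|g(u)-g(u_0)|}{(u-u_0)^2+|z|^2}\,\mathrm{d}u .
\]
As $z\to0$ forces $|z|\to0$, the corollary now reduces to the classical statement that, for $g\in L^p(\mathbb{R})$ with $1\leqslant p<\infty$ and $u_0$ a Lebesgue point of $g$,
\[
  y\int_{\mathbb{R}}\frac{|g(u)-g(u_0)|}{(u-u_0)^2+y^2}\,\mathrm{d}u\longrightarrow0\qquad(y\to0^+).
\]

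For completeness I would include the short argument for this last fact. Fix $\varepsilon>0$ and use the Lebesgue-point property to pick $\delta_1>0$ with $\omega(h):=\int_{|u-u_0|<h}|g(u)-g(u_0)|\,\mathrm{d}u\leqslant2\varepsilon h$ for $0<h\leqslant\delta_1$. On the region $|u-u_0|\geqslant\delta_1$ one bounds the kernel by $(u-u_0)^{-2}$ and uses $g\in L^p$ (H\"older's inequality when $p>1$, and directly from $(u-u_0)^{-2}\leqslant\delta_1^{-2}$ when $p=1$) to get $\int_{|u-u_0|\geqslant\delta_1}|g(u)-g(u_0)|(u-u_0)^{-2}\,\mathrm{d}u=:B_{\delta_1}<\infty$; this part contributes at most $yB_{\delta_1}$. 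On $|u-u_0|<\delta_1$ one rewrites the integral as $\int_0^{\delta_1}(h^2+y^2)^{-1}\,\mathrm{d}\omega(h)$, integrates by parts, and combines $\omega(h)\leqslant2\varepsilon h$ with $\int_0^\infty h^2(h^2+y^2)^{-2}\,\mathrm{d}h=\pi/(4y)$ to see that, after multiplying by $y$, this part is at most $2\varepsilon y/\delta_1+\pi\varepsilon\leqslant(2+\pi)\varepsilon$ once $y<\delta_1$. Letting $y\to0$ gives $\limsup_{y\to0}y\int_{\mathbb{R}}|g(u)-g(u_0)|\bigl((u-u_0)^2+y^2\bigr)^{-1}\,\mathrm{d}u\leqslant(2+\pi)\varepsilon$, and $\varepsilon$ was arbitrary.

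The work here is essentially bookkeeping rather than conceptual; the one point that deserves care is using the non-tangential hypothesis on $z$ exactly where it is needed, namely to secure simultaneously the normalization $\int_\Gamma K_z(\zeta,\zeta_0)\,\mathrm{d}\zeta=1$ and the Poisson-type estimate of Lemma~\ref{lem-170629-2230}; once those two facts are in hand the convergence uses $z$ only through $|z|\to0$, so no further cone condition is required. The sole genuine case split is the far-region step, where the endpoint $p=1$ must be handled without H\"older's inequality.
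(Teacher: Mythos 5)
Your argument is correct, and it is in fact more than the paper provides: the paper's ``proof'' of this corollary is a one-line citation to \cite{DL17} for $p>1$ together with the assertion that $p=1$ is ``nearly the same,'' so there is no written argument to compare against. What you supply is the standard approximate-identity proof, and it is exactly the route the surrounding text sets up: the normalization $\int_\Gamma K_z(\zeta,\zeta_0)\,\mathrm{d}\zeta=1$ (valid once $\zeta_0+z\in\Omega_+$ and $\zeta_0-z\in\Omega_-$, which the cone condition and small $|z|$ guarantee by the remark in Section~2), the Poisson-type bound of Lemma~\ref{lem-170629-2230}, the transfer to the real line via $|\zeta(u)-\zeta(u_0)|\geqslant|u-u_0|$ and $|\mathrm{d}\zeta|\leqslant\sqrt{1+M^2}\,\mathrm{d}u$, and the Lebesgue-point estimate for the kernel $y/((u-u_0)^2+y^2)$. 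Your integration-by-parts computation in the near region and your separate treatment of the far region at the endpoint $p=1$ (where H\"older is unavailable and one instead uses $(u-u_0)^{-2}\leqslant\delta_1^{-2}$ together with the finiteness of $\int_{|u-u_0|\geqslant\delta_1}(u-u_0)^{-2}\,\mathrm{d}u$ for the constant $|g(u_0)|$) are both sound, and the latter is precisely the point at which the $p=1$ case genuinely differs from $p>1$. You also correctly isolate the only place the non-tangential hypothesis is used, namely to secure the normalization and the kernel bound simultaneously. I see no gap.
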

\begin{proof}
  If $p>1$, then the corollary has been proved in \cite{DL17}. 
  If $p=1$, the proof is nearly the same, thus we omit it here.
\end{proof}

The above corollary shows that, for $w=\zeta_0+z\in\Omega_+$ 
where $\zeta_0\in\Gamma$ and $z\in\mathrm{C}$, if we define 
$G(w)= G(\zeta_0+z)
=\int_\Gamma K_z(\zeta,\zeta_0)F(\zeta)\,\mathrm{d}\zeta$, 
then $G(w)$ has non-tangential boundary limit $F(\zeta_0)$ at $\zeta_0$,
although $G(w)$ maybe only defined on $\Omega_\phi(\zeta_0)$ 
and near $\zeta_0$.

Notice that if $G(w)\in H^\infty(\Omega_+)$, then it has non-tangential 
boundary limit $G(\zeta)$ a.e.\@ on $\Gamma$, since 
$G(\Phi(z))\in H^\infty(\mathbb{C}_+)$ has non-tangential boundary limit 
a.e.\@ on the real axis.

\begin{lemma}\label{lem-170716-1340}
  If $1\leqslant p< \infty$, $\frac1p+\frac1q=1$, $F(w)\in H^p(\Omega_+)$ and 
  $G(w)\in H^q(\Omega_+)$, then
  \[\int_\Gamma F(\zeta)G(\zeta)\,\mathrm{d}\zeta= 0.\]
\end{lemma}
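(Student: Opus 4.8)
The plan is to reduce everything to the Cauchy representation already available in Theorem~\ref{thm-170803-1020}. First I would check that the product $FG$, which is analytic on $\Omega_+$, belongs to $H^1(\Omega_+)$: for each $\tau>0$, H\"older's inequality with exponents $p,q$ on the curve $\Gamma_\tau$ gives
\[\int_{\Gamma_\tau}|F(w)G(w)|\,|\mathrm{d}w|
  \leqslant \Big(\int_{\Gamma_\tau}|F|^p|\mathrm{d}w|\Big)^{1/p}
            \Big(\int_{\Gamma_\tau}|G|^q|\mathrm{d}w|\Big)^{1/q}
  \leqslant \lVert F\rVert_{H^p(\Omega_+)}\lVert G\rVert_{H^q(\Omega_+)},\]
so $\lVert FG\rVert_{H^1(\Omega_+)}<\infty$. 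By Theorem~\ref{thm-170801-1650}, $F$ and $G$ each have non-tangential boundary limits a.e.\ on $\Gamma$, hence so does $FG$, with boundary function $F(\zeta)G(\zeta)$; the same H\"older inequality applied to the boundary functions shows $F(\zeta)G(\zeta)\in L^1(\Gamma,|\mathrm{d}\zeta|)$.

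Next I would invoke the second identity in Theorem~\ref{thm-170803-1020}, applied to $FG\in H^1(\Omega_+)$ with $p=1$: for every $w\in\Omega_-$,
\[\frac1{2\pi\mathrm{i}}\int_\Gamma\frac{F(\zeta)G(\zeta)}{\zeta-w}\,\mathrm{d}\zeta=0.\]
I would then let $w$ tend to infinity inside $\Omega_-$ along the negative imaginary axis. Choosing $w=-\mathrm{i}R$, which lies in $\Omega_-$ as soon as $R>-a(0)$, parametrizing $\zeta=\zeta(u)=u+\mathrm{i}a(u)$, and multiplying by $\mathrm{i}R$, the identity becomes
\[0=\int_{\mathbb{R}}\frac{\mathrm{i}R}{u+\mathrm{i}\big(a(u)+R\big)}\,
     F\big(\zeta(u)\big)G\big(\zeta(u)\big)\big(1+\mathrm{i}a'(u)\big)\,\mathrm{d}u.\]
The kernel $\mathrm{i}R\big(u+\mathrm{i}(a(u)+R)\big)^{-1}$ converges to $1$ pointwise in $u$ as $R\to\infty$, and its modulus $R\big(u^2+(a(u)+R)^2\big)^{-1/2}$ is bounded by a constant depending only on $M$ and $a(0)$: where $|a(u)+R|\geqslant R/2$ it is at most $2$, and where $|a(u)+R|<R/2$ one has $a(u)<-R/2$, hence $M|u|\geqslant|a(u)|-|a(0)|>R/2-|a(0)|$, so the denominator is again comparable to $R$. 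Since the remaining factor lies in $L^1(\mathbb{R})$ by the previous paragraph, dominated convergence yields
\[0=\int_{\mathbb{R}}F\big(\zeta(u)\big)G\big(\zeta(u)\big)\big(1+\mathrm{i}a'(u)\big)\,\mathrm{d}u
  =\int_\Gamma F(\zeta)G(\zeta)\,\mathrm{d}\zeta,\]
which is the assertion.

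The only step requiring real care is the uniform bound on $R\big(u^2+(a(u)+R)^2\big)^{-1/2}$ that legitimizes the passage to the limit; the rest is bookkeeping. If one prefers to sidestep even this estimate, an alternative is to transport the statement to the half-plane: by Theorem~\ref{thm-170803-1520} one has $TF\in H^p(\mathbb{C}_+)$ and $TG\in H^q(\mathbb{C}_+)$, so $(TF)(TG)=(FG)(\Phi)\cdot\Phi'\in H^1(\mathbb{C}_+)$ by H\"older, while the change of variables $w=\Phi(z)$, with boundary values matching a.e.\ by Lemma~\ref{lem-170706-2130}, gives $\int_\Gamma F(\zeta)G(\zeta)\,\mathrm{d}\zeta=\int_{\mathbb{R}}(TF)(x)(TG)(x)\,\mathrm{d}x$; the claim then reduces to the classical fact that $\int_{\mathbb{R}}h(x)\,\mathrm{d}x=0$ for every $h\in H^1(\mathbb{C}_+)$, which is in turn the flat case $a\equiv0$ of the argument above, where the kernel bound is immediate.
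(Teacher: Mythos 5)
Your proof is correct, and its engine---sending the pole of a vanishing Cauchy integral off to $-\mathrm{i}\infty$ so that the renormalized kernel tends to $1$, with a domination exploiting the Lipschitz bound on $a$---is the same one the paper uses; the difference is where you start. The paper first proves the special case $\int_\Gamma F\,\mathrm{d}\zeta=0$ for $F\in H^1(\Omega_+)$ by working on the shifted curves: from Lemma~\ref{lem-170522-1043} it gets $\int_{\Gamma}(\tau-\sigma)F(\zeta+\mathrm{i}\tau)(\zeta+\mathrm{i}\tau-w_0)^{-1}\,\mathrm{d}\zeta=0$ with $w_0=\zeta(u_0)+\mathrm{i}\sigma$, lets $\sigma\to-\infty$ (its estimate $|\zeta+\mathrm{i}\tau-w_0|\geqslant(\tau-\sigma)/\sqrt{1+M^2}$ yields the uniform kernel bound $\sqrt{1+M^2}$ in one line, versus your two-case estimate involving $|a(0)|$, which is also valid once $R$ is large), and only then passes to the boundary via the $L^1$ convergence of Theorem~\ref{thm-170801-1650}; it settles $p=1$, $q=\infty$ by applying this to $FG\in H^1(\Omega_+)$ and simply cites \cite{MC97} for $1<p<\infty$. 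You instead form $FG\in H^1(\Omega_+)$ at the outset and invoke the boundary identity of Theorem~\ref{thm-170803-1020} directly; since that theorem precedes this lemma and does not depend on it, there is no circularity, and you cover all $1\leqslant p<\infty$ uniformly without the external citation---a small gain in self-containedness. Your half-plane alternative via $T$ is likewise sound and consistent with the paper's own change of variables in the proof of Theorem~\ref{thm-170803-1520}. The only points worth making explicit are that the non-tangential boundary value of $FG$ is $F(\zeta)G(\zeta)$ a.e.\ (uniqueness of non-tangential limits, together with the remark preceding the lemma for the $q=\infty$ case) and that your kernel bound requires $R$ large compared with $|a(0)|$, which is harmless.
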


\begin{proof}
  If $p>1$, Meyer and Coifman have proved this lemma in \cite{MC97}. 
  We now assume $F\in H^1(\Omega_+)$ and prove
  $\int_\Gamma F(\zeta)\,\mathrm{d}\zeta= 0$ first.
  By using the same method as in Lemma~\ref{lem-170522-1043}, 
  we know that the value of 
  $\int_{\Gamma} F(\zeta+\mathrm{i}\tau)\,\mathrm{d}\zeta$ 
  is independent of $\tau$ for $\tau>0$.
  Choose $w_0=\zeta(u_0)+\mathrm{i}\sigma$, where $\sigma<\tau$, 
  then by Lemma~\ref{lem-170522-1043},
  \[\int_\Gamma \frac{F(\zeta+\mathrm{i}\tau)}{\zeta+\mathrm{i}\tau-w_0}
        \,\mathrm{d}\zeta= 0,\]
  that is 
  \[\int_\Gamma \frac{(\tau-\sigma)F(\zeta+\mathrm{i}\tau)}{
      \zeta+\mathrm{i}\tau-w_0}\,\mathrm{d}\zeta= 0.\]
  Let $\zeta=u+\mathrm{i} a(u)$ and the denominator of 
  the integrand above be $h(\zeta)$, then by the definition of $a(u)$, 
  \begin{align*}
    |h(\zeta)|^2
    &= |u-u_0|^2+\big(a(u)-a(u_0)+\tau-\sigma\big)^2    \\
    &\geqslant \frac{1+M^2}{M^2}\big(a(u)-a(u_0)\big)^2
        + 2(\tau-\sigma)\big(a(u)-a(u_0)\big)
        + (\tau-\sigma)^2                           \\
    &\geqslant \frac{(\tau-\sigma)^2}{1+M^2},
  \end{align*}
  that is, the integrand is dominated by 
  $\sqrt{1+M^2}|F(\zeta+\mathrm{i}\tau)|$ which is 
  in $L^1(\Gamma, |\mathrm{d}\zeta|)$.
  By Lebesgue's dominated convergence theorem, if we let 
  $\sigma\to -\infty$ and notice that $w_0=\zeta(u_0)+\mathrm{i}\sigma$, 
  then
  \[\int_{\Gamma} F(\zeta+\mathrm{i}\tau)\,\mathrm{d}\zeta
    = \lim_{\sigma\to -\infty} \int_\Gamma 
          \frac{(\tau-\sigma)F(\zeta+\mathrm{i}\tau)}
               {\zeta+\mathrm{i}\tau-w_0}\,\mathrm{d}\zeta
    = 0.\]
  Now $\int_{\Gamma} F(z)\,\mathrm{d} z=0$ is obvious if we notice, 
  by Theorem~\ref{thm-170801-1650}, that
  \[\int_{\Gamma} |F(\zeta+\mathrm{i}\tau)-F(\zeta)|
        |\mathrm{d}\zeta| =0 \text{ as } \tau\to\infty.\]
  
  If $F\in H^1(\Omega_+)$ and $G\in H^{\infty}(\Omega_+)$, then 
  $FG\in H^1(\Omega_+)$, thus
  \[\int_\Gamma F(\zeta)G(\zeta)\,\mathrm{d}\zeta=0,\]
  and the lemma is proved.
\end{proof}

The next theorem gives the sufficient and necessary condition of 
a $L^p(\Gamma,|\mathrm{d}\zeta|)$ function be the non-tangential boundary  
of a function in $H^p(\Omega_+)$, where $1\leqslant p< \infty$.
\begin{theorem}\label{thm-170803-1120}
  If $1\leqslant p< \infty$, and $F(\zeta)\in L^p(\Gamma,\lvert\mathrm{d}\zeta\rvert)$, 
  then $F(\zeta)$ is the non-tangential boundary limit of a function 
  in $H^p(\Omega_+)$ if and only if
  \[\int_{\Gamma}\frac{F(\zeta)}{\zeta-\alpha}\,\mathrm{d}\zeta=0, \quad
  \text{for all } \alpha\notin\overline{\Omega_+}.\]
\end{theorem}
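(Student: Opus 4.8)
The plan is to prove the two implications separately, using the Cauchy representation (Theorem~\ref{thm-170803-1020}) for the necessity and the reproducing kernel $K_z(\zeta,\zeta_0)$ together with Corollary~\ref{cor-170803-1040} and Corollary~\ref{cor-170803-1050} for the sufficiency.

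For the \emph{necessity}, suppose $F(\zeta)$ is the non-tangential boundary limit of some $F(w)\in H^p(\Omega_+)$. Then Theorem~\ref{thm-170803-1020} gives, for every $w\in\Omega_-$,
\[
  \frac1{2\pi\mathrm{i}}\int_\Gamma\frac{F(\zeta)}{\zeta-w}\,\mathrm{d}\zeta = 0.
\]
This already handles all $\alpha\in\Omega_-$. For $\alpha$ in the unbounded component of $\mathbb{C}\setminus\overline{\Omega_+}$ that is not covered by $\Omega_-$ — but in fact $\mathbb{C}\setminus\overline{\Omega_+}=\Omega_-$ here since $\Gamma$ is a graph, so $\mathbb{C}=\Omega_+\cup\Gamma\cup\Omega_-$ — every $\alpha\notin\overline{\Omega_+}$ lies in $\Omega_-$, and we are done. (If one worries about $\alpha$ being very far out, note $1\le p<\infty$ makes $F(\zeta)/(\zeta-\alpha)$ integrable on $\Gamma$ via Hölder with the bound $|\zeta-\alpha|\ge\operatorname{dist}(\alpha,\Gamma)>0$ plus the linear growth of $\Gamma$.) So the forward direction is essentially a restatement of Theorem~\ref{thm-170803-1020}.

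For the \emph{sufficiency}, assume the vanishing condition. The idea is to define the candidate function by the kernel $K$: for $w=\zeta_0+\mathrm{i}\tau\in\Omega_+$ with $\zeta_0\in\Gamma$, $\tau>0$, set
\[
  \widetilde F(w) = \int_\Gamma K_{\mathrm{i}\tau}(\zeta,\zeta_0)F(\zeta)\,\mathrm{d}\zeta.
\]
By Lemma~\ref{lem-170622-2050} the integral converges (using $F\in L^p$, Hölder, and that the kernel decays like the Poisson kernel), and by Corollary~\ref{cor-170803-1040} we get $\sup_{\tau>0}\int_{\Gamma_\tau}|\widetilde F|^p\,|\mathrm{d}w|<\infty$. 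Two things remain: (a) $\widetilde F$ is analytic on $\Omega_+$, and (b) its non-tangential boundary limit equals $F(\zeta)$ a.e. For (b) one invokes Corollary~\ref{cor-170803-1050}: at every Lebesgue point $u_0$ of $F(u+\mathrm{i}a(u))$ where $\zeta'(u_0)$ exists (which is a.e.\ point of $\Gamma$ by Lemma~\ref{lem-170706-2130}), the non-tangential limit of $\widetilde F$ is $F(\zeta_0)$. For (a), the analyticity, one writes $K_z(\zeta,\zeta_0)=\frac1{2\pi\mathrm{i}}\big(\frac1{\zeta-(\zeta_0+z)}-\frac1{\zeta-(\zeta_0-z)}\big)$ and observes that the first term is the Cauchy transform of $F$ evaluated at the point $w=\zeta_0+z\in\Omega_+$ while the second term is the Cauchy transform at $\zeta_0-z\in\Omega_-$; the hypothesis forces the latter Cauchy integral to \emph{vanish}, so $\widetilde F(w)$ coincides with $\frac1{2\pi\mathrm{i}}\int_\Gamma \frac{F(\zeta)}{\zeta-w}\,\mathrm{d}\zeta$, which is manifestly analytic in $w\in\Omega_+$ (differentiation under the integral sign is justified by the same Hölder/distance estimates). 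Once analyticity and the $H^p$ bound are in hand, $\widetilde F\in H^p(\Omega_+)$ and $\widetilde F$ has boundary values $F$ a.e., completing the proof.

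The main obstacle I expect is reconciling the $K$-kernel definition of $\widetilde F$ with the Cauchy-integral definition so as to get analyticity cleanly: the kernel formulation is what makes the $H^p$-norm bound (Corollary~\ref{cor-170803-1040}) and the boundary-limit statement (Corollary~\ref{cor-170803-1050}) directly available, whereas the plain Cauchy integral is what makes analyticity on $\Omega_+$ transparent — and the hypothesis $\int_\Gamma F(\zeta)/(\zeta-\alpha)\,\mathrm{d}\zeta=0$ for $\alpha\notin\overline{\Omega_+}$ is precisely the bridge that identifies the two, since it kills the ``$\zeta_0-z$'' half of $K$. One should also be slightly careful that the representation $w=\zeta_0+\mathrm{i}\tau$ with $\zeta_0\in\Gamma$, $\tau>0$ covers all of $\Omega_+$ (it does, since $\Gamma$ is a graph over $\mathbb{R}$), so $\widetilde F$ is genuinely defined on the whole domain; and for $p=1$ the integrability of $F(\zeta)/(\zeta-\alpha)$ uses only $\operatorname{dist}(\alpha,\Gamma)>0$ directly, with no Hölder needed.
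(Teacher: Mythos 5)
Your proposal is correct and the sufficiency argument is essentially the paper's: the paper likewise builds the candidate from the Cauchy integral of $F(\zeta)$, uses the hypothesis at $\zeta_0-\mathrm{i}\tau\in\Omega_-$ to identify it with $\int_\Gamma K_{\mathrm{i}\tau}(\zeta,\zeta_0)F(\zeta)\,\mathrm{d}\zeta$, and then invokes Corollary~\ref{cor-170803-1040} for the $H^p$ bound and Corollary~\ref{cor-170803-1050} for the non-tangential boundary limit (establishing analyticity by a continuity estimate plus Morera, just as you indicate). The only divergence is in the necessity direction, where you quote the $\Omega_-$ vanishing half of Theorem~\ref{thm-170803-1020} directly, whereas the paper applies the orthogonality Lemma~\ref{lem-170716-1340} to $G(w)=(w-\alpha)^{-1}\in H^\infty(\Omega_+)$; both are immediate consequences of results already established, so nothing of substance changes.
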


\begin{proof}
  The ``$p>1$'' version of this theorem was proved in \cite{DL17}, and we now 
  assume $p=1$.
  
  ``$\Rightarrow$'': for fixed $\alpha\notin\overline{\Omega_+}$, 
  $G(w)=\frac1{w-\alpha}\in H^{\infty}(\Omega_+)$ and its non-tangential 
  boundary limit is $G(\zeta)= \frac1{\zeta-\alpha}$, then by 
  Lemma~\ref{lem-170716-1340}, we have 
  \[\int_{\Gamma}\frac{F(\zeta)}{\zeta-\alpha}\,\mathrm{d}\zeta=0.\]
  
  ``$\Leftarrow$'': for $w\in\Omega_+$, define 
  \[F(w)= \frac1{2\pi\mathrm{i}} 
  \int_{\Gamma}\frac{F(\zeta)}{\zeta-w}\,\mathrm{d}\zeta.\]
  For fixed $w_1\in\Omega_+$, there exists a constant $\delta>0$ such that 
  the open disk $D(w_1,2\delta)\subset\Omega_+$. 
  Choose $w_2\in D(w_1,\delta)$, then for $\zeta\in\Gamma$,
  \[|\zeta-w_2|\geqslant |\zeta-w_1|\geqslant \delta,\]
  and
  \begin{align*}
    |F(w_1)-F(w_2)|
    &\leqslant \frac1{2\pi} \int_\Gamma 
          \frac{|(w_1-w_2)F(\zeta)|}{|\zeta-w_1||\zeta-w_2|}
          |\mathrm{d}\zeta|                                   \\
    &\leqslant \frac{|w_1-w_2|}{2\pi\cdot 2\delta^2} 
        \int_\Gamma |F(\zeta)||\mathrm{d}\zeta|            \\
    &\leqslant \frac{1}{4\pi\delta^2} 
        \lVert F\rVert_{L^1(\Gamma,|\mathrm{d}\zeta|)}|w_1-w_2|,
  \end{align*}
  which shows that $F(w)$ is continuous on $\Omega_+$. Now, 
  it is easy to deduce from Morera's theorem that $F(w)$ is also analytic.
  
  If we write $w=\zeta_0+\mathrm{i}\tau$ where $\zeta_0\in\Gamma$ 
  and $\tau>0$, then $\zeta_0-\mathrm{i}\tau\in\Omega_-$, and
  \begin{align*}
    F(w)
    &= \frac1{2\pi\mathrm{i}} \int_{\Gamma} F(\zeta) \Big(
    \frac1{\zeta-(\zeta_0+\mathrm{i}\tau)}
    - \frac1{\zeta-(\zeta_0-\mathrm{i}\tau)}\Big)\mathrm{d}\zeta   \\
    &= \int_\Gamma F(\zeta)K_{\mathrm{i}\tau}(\zeta,\zeta_0) 
    \,\mathrm{d}\zeta.
  \end{align*}
  By Corollary~\ref{cor-170803-1040}, $F(w)\in H^1(\Omega_+)$.
  
  For fixed $\zeta_0\in\Gamma$ and $\phi\in(0,\frac{\pi}2)$, if 
  $w\in\Omega_\phi(\zeta_0)\cap\Omega_+$, 
  we write $w=\zeta_0+z$, then there exists $\delta>0$, such that 
  $w_0-z\in\Omega_-$ for all $|z|<\delta$, and
  \begin{align*}
    F(w)
    &= \frac1{2\pi\mathrm{i}} \int_{\Gamma} F(\zeta) \Big(
    \frac1{\zeta-(\zeta_0+z)}- \frac1{\zeta-(\zeta_0-z)}\Big)
    \mathrm{d}\zeta   \\
    &= \int_\Gamma F(\zeta)K_z(\zeta,\zeta_0) 
    \,\mathrm{d}\zeta.
  \end{align*}
  By Corollary~\ref{cor-170803-1050}, $F(w)\to F(\zeta_0)$ if $w\to\zeta_0$, 
  that is $F(w)$ has non-tangential boudary limit $F(\zeta_0)$ 
  at $\zeta_0\in\Gamma$.
  
  Thus, $F(\zeta)$ is the non-tangential boundary limit function of
  $F(w)\in H^1(\Omega_+)$.
\end{proof}

\begin{lemma}[\cite{De10}]
  If $0<p<\infty$, then $H^p(\mathbb{C}_+)$ is complete and seperable.
\end{lemma}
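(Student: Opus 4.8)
The plan is to prove completeness by a normal-families argument resting on the pointwise growth bound for $H^p$ functions, and to prove separability by isometrically embedding $H^p(\mathbb{C}_+)$ into $L^p(\mathbb{R})$. As a preliminary I would record the elementary estimate: since $f$ is analytic, $|f|^p$ is subharmonic on $\mathbb{C}_+$, so for $z=x+\mathrm{i}y$ with $y>0$ and any $r\in(0,y)$ the sub-mean-value inequality over the disk $D(z,r)\subset\mathbb{C}_+$ gives
\[
  |f(z)|^p \leqslant \frac{1}{\pi r^2}\iint_{D(z,r)} |f(w)|^p\,\mathrm{d}\lambda(w)
  \leqslant \frac{1}{\pi r^2}\int_{y-r}^{y+r}\!\!\int_{\mathbb{R}} |f(s+\mathrm{i}t)|^p\,\mathrm{d}s\,\mathrm{d}t
  \leqslant \frac{2}{\pi r}\,\lVert f\rVert_{H^p(\mathbb{C}_+)}^p .
\]
Letting $r\to y$ yields $|f(x+\mathrm{i}y)|\leqslant (2/(\pi y))^{1/p}\lVert f\rVert_{H^p(\mathbb{C}_+)}$, so convergence in $H^p(\mathbb{C}_+)$ forces uniform convergence on every half-plane $\{\mathrm{Im}\,z\geqslant\delta\}$, $\delta>0$.

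For completeness, let $\{f_n\}$ be a Cauchy sequence, i.e.\@ $\lVert f_n-f_m\rVert_{H^p(\mathbb{C}_+)}\to 0$ as $n,m\to\infty$. By the estimate above $\{f_n\}$ is uniformly Cauchy on each $\{\mathrm{Im}\,z\geqslant\delta\}$, hence converges locally uniformly on $\mathbb{C}_+$ to an analytic function $f$. For each fixed $y>0$, Fatou's lemma gives
\[
  \int_{\mathbb{R}} |f_n(x+\mathrm{i}y)-f(x+\mathrm{i}y)|^p\,\mathrm{d}x
  \leqslant \liminf_{k\to\infty}\int_{\mathbb{R}} |f_n(x+\mathrm{i}y)-f_k(x+\mathrm{i}y)|^p\,\mathrm{d}x
  \leqslant \liminf_{k\to\infty}\lVert f_n-f_k\rVert_{H^p(\mathbb{C}_+)}^p .
\]
Since the right-hand side is independent of $y$, taking the supremum over $y>0$ gives $\lVert f_n-f\rVert_{H^p(\mathbb{C}_+)}^p\leqslant\liminf_k\lVert f_n-f_k\rVert_{H^p(\mathbb{C}_+)}^p$; applied with one fixed $n$ this shows $f\in H^p(\mathbb{C}_+)$, and letting $n\to\infty$ and invoking the Cauchy property shows $f_n\to f$ in $H^p(\mathbb{C}_+)$. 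Hence the space is complete.

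For separability I would use the boundary-value map $\iota\colon f\mapsto f(x)$, which by the facts recalled in Section~2 carries $H^p(\mathbb{C}_+)$ into $L^p(\mathbb{R})$ with $\lVert f\rVert_{H^p(\mathbb{C}_+)}=\lVert \iota f\rVert_{L^p(\mathbb{R})}$, hence is an isometric embedding for the norm ($1\leqslant p<\infty$) or for the metric $\lVert\cdot\rVert^p$ ($0<p<1$). Since $L^p(\mathbb{R})$ is separable and every subset of a separable metric space is separable, $H^p(\mathbb{C}_+)$ is separable.

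The argument is essentially routine; the one point deserving care is verifying that the limit function returns to $H^p(\mathbb{C}_+)$ with the correct norm bound. The interchange of $\sup_{y}$ with $\liminf_{k}$ that this seems to require is not actually needed: once Fatou's lemma is applied for a fixed $y$, the resulting bound $\liminf_k\lVert f_n-f_k\rVert^p_{H^p(\mathbb{C}_+)}$ no longer depends on $y$, so one may simply take the supremum afterwards.
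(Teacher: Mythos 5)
The paper does not prove this lemma at all --- it is quoted from the reference \cite{De10} and used as a black box --- so there is no in-paper argument to compare against. Your proof is correct and is the standard one: the subharmonicity estimate $|f(x+\mathrm{i}y)|\leqslant (2/(\pi y))^{1/p}\lVert f\rVert_{H^p(\mathbb{C}_+)}$ plus Fatou's lemma for completeness, and the isometric boundary-value embedding into the separable space $L^p(\mathbb{R})$ (valid for the norm when $1\leqslant p<\infty$ and for the metric $\lVert\cdot\rVert^p$ when $0<p<1$) for separability.
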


\begin{theorem}
  If $0<p<\infty$, then $H^p(\Omega_+)$ is complete and seperable.
\end{theorem}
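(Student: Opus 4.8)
The plan is to transport the corresponding statement for $H^p(\mathbb{C}_+)$ across the isomorphism $T$ established in Theorem~\ref{thm-170803-1520}. Since $T\colon H^p(\Omega_+)\to H^p(\mathbb{C}_+)$ is a bijection and both $T$ and $T^{-1}$ are bounded, $T$ is a homeomorphism in the relevant topology (the norm topology for $1\leqslant p<\infty$, the translation-invariant metric topology given by $\lVert\cdot\rVert_{H^p(\Omega_+)}^p$ for $0<p<1$). Completeness and separability are both topological invariants preserved under homeomorphism, so the result follows immediately from the preceding lemma applied to $H^p(\mathbb{C}_+)$.

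In more detail, I would argue as follows. For \textbf{completeness}: let $\{F_n\}$ be a Cauchy sequence in $H^p(\Omega_+)$. By boundedness of $T$, the estimate $\lVert TF_n-TF_m\rVert_{H^p(\mathbb{C}_+)}\leqslant \lVert F_n-F_m\rVert_{H^p(\Omega_+)}$ (or the $p$-th power analogue when $0<p<1$) shows $\{TF_n\}$ is Cauchy in $H^p(\mathbb{C}_+)$, hence converges to some $g\in H^p(\mathbb{C}_+)$ by the preceding lemma. Set $F=T^{-1}g\in H^p(\Omega_+)$; boundedness of $T^{-1}$ gives $\lVert F_n-F\rVert_{H^p(\Omega_+)}=\lVert T^{-1}(TF_n-g)\rVert_{H^p(\Omega_+)}\leqslant C\lVert TF_n-g\rVert_{H^p(\mathbb{C}_+)}\to 0$, so $F_n\to F$ and $H^p(\Omega_+)$ is complete. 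For \textbf{separability}: let $\{g_k\}$ be a countable dense subset of $H^p(\mathbb{C}_+)$; then $\{T^{-1}g_k\}$ is a countable subset of $H^p(\Omega_+)$, and for any $F\in H^p(\Omega_+)$ and $\varepsilon>0$ we may pick $g_k$ with $\lVert TF-g_k\rVert_{H^p(\mathbb{C}_+)}$ small, whence $\lVert F-T^{-1}g_k\rVert_{H^p(\Omega_+)}\leqslant C\lVert TF-g_k\rVert_{H^p(\mathbb{C}_+)}$ is small as well, so $\{T^{-1}g_k\}$ is dense. One should phrase the $0<p<1$ case with the metric $d(F,G)=\lVert F-G\rVert_{H^p(\Omega_+)}^p$ throughout, using that $T$ and $T^{-1}$ are bounded and hence Lipschitz with respect to these metrics (the constants simply get raised to the $p$-th power).

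There is essentially no obstacle here: the entire content of the theorem has already been done, once in the isomorphism theorem and once in the cited lemma for the half-plane. The only point requiring a line of care is the $0<p<1$ regime, where $H^p$ is not a normed space but a complete metric space, so one must make sure the notions of Cauchy sequence, convergence, and density are all taken with respect to the correct metric, and that the boundedness of $T^{\pm 1}$ is interpreted as continuity (equivalently, a homogeneity-type inequality $d(TF,TG)\leqslant C\,d(F,G)$) in that metric. With that caveat noted, the proof is a short transport argument and could be written in a single paragraph.
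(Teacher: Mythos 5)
Your proposal is correct and follows essentially the same route as the paper: transport completeness and separability from $H^p(\mathbb{C}_+)$ through the isomorphism $T$, using $\lVert TF_n-TF_m\rVert_{H^p(\mathbb{C}_+)}\leqslant\lVert F_n-F_m\rVert_{H^p(\Omega_+)}$ and the boundedness of $T^{-1}$. Your extra remark on treating the $0<p<1$ case with the metric $\lVert\cdot\rVert^p$ is a welcome point of care that the paper leaves implicit.
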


\begin{proof}
  Suppose $\{F_n(w)\}$ is a Cauchy sequence in $H^p(\Omega_+)$, that is 
  \[\lVert F_n-F_m\rVert_{H^p(\Omega_+)}\to 0,\quad
  \text{as }n,m\to\infty.\]
  By Theorem~\ref{thm-170803-1520}, 
  \[\lVert TF_n-TF_m\rVert_{H^p(\mathbb{C}_+)} 
    \leqslant \lVert F_n-F_m\rVert_{H^p(\Omega_+)},\]
  then $\{TF_n(z)\}$ is a Cauchy sequence in $H^p(\mathbb{C}_+)$, 
  and we suppose it converges to $f(z)\in H^p(\mathbb{C}_+)$. 
  Since $T^{-1}f\in H^p(\Omega_+)$, and 
  \[\lVert F_n-T^{-1}f\rVert_{H^p(\Omega_+)}
    \leqslant \lVert T^{-1}\rVert\lVert TF_n-f\rVert_{H^p(\mathbb{C}_+)},\]
  we know that $\{F_n\}$ converges in $H^p(\Omega_+)$, 
  thus $H^p(\Omega_+)$ is complete. 
  The separability could be similarly proved.
\end{proof}

\section*{Funding}
This work is supported by National Natural Science Foundation 
of China(Grant No.\@ 11271045).


\begin{thebibliography}{99}
\bibitem{Du70} Duren PL. Theory of $H^p$ Spaces. 
  New York: Academic Press; 1970.

\bibitem{DL17} Deng GT, Liu R. Hardy Spaces ($1\leqslant p< \infty$) 
  over Lipschitz Domains. arXiv:1708.01188 [math.CV].  

\bibitem{De10} Deng GT. Complex Analysis (in Chinese), 
  Beijing: Beijing Normal University Press; 2010.

\bibitem{Ke90} Kenig C. Weighted $H^p$ spaces on Lipschitz domains. 
  Amer. J. Math. 1980;102:129-163.
  
\bibitem{Ga07} Garnett JB. Bounded Analytic Functions, 
  New York: Springer; 2007.
    
\bibitem{MC97} Meyer Y, Coifman R. 
  Wavelets: Calder\'on-Zygmund and Multilinear Operators. 
  Cambridge (UK): Cambridge University Press; 1997.

\end{thebibliography}
\end{document}